\definecolor{webgreen}{rgb}{0,.5,0}
\definecolor{webbrown}{rgb}{.8,0,0}
\definecolor{emphcolor}{rgb}{0.5,0.95,0.95}
\newcommand{\rd}{\textcolor[rgb]{1.00,0.00,0.00}}
\ifpdf \hypersetup{pdftex,
	pdfstartview=FitH, 
	bookmarksopen=true,
	bookmarksnumbered=true
} \else \hypersetup{dvips} \fi
\renewcommand{\tilde}{\widetilde}
\renewcommand{\bar}{\overline}
\numberwithin{equation}{section}
\newtheorem{theorem}{Theorem}[section]
\newtheorem{proposition}[theorem]{Proposition}
\newtheorem{remark}[theorem]{Remark}
\newtheorem{lemma}[theorem]{Lemma}
\newtheorem{assump}{Assumption}[section]
\newcommand {\R}{\mathbb{R}}
\newcommand {\F}{\mathcal{F}}
\newcommand {\N}{\mathbb{N}}
\newcommand {\p}{\mathbb{P}}
\newcommand {\G}{\mathcal{G}}
\newcommand {\E}{\mathbb{E}}
\newcommand{\diff}{{\rm d}}
\newcommand{\lev}{L\'{e}vy }
\newcommand{\px}{\mathbb{P}_x}
\newcommand{\ol}{\overline}
\newcommand{\ul}{\underline}
\newcommand{\du}{\,{\rm d}u}
\begin{document}
\title[Non-zero-sum optimal stopping game with continuous versus periodic exercise opportunities]{{Non-zero-sum optimal stopping game with \\ continuous versus periodic exercise opportunities}}

	
\author[J.L. P\'erez]{Jos\'e Luis P\'erez$^*$}\thanks{$^*$Department of Probability and Statistics, Centro de Investigaci\'on en Matem\'aticas, A.C. Calle Jalisco S/N
C.P. 36240, Guanajuato, Mexico.
Email: jluis.garmendia@cimat.mx}

\author[N. Rodosthenous]{Neofytos Rodosthenous$^\dagger$}
\thanks{$^\dagger$Department of Mathematics, University College London, Gower St, London WC1E 6BT, UK. Email: n.rodosthenous@ucl.ac.uk}

\author[K. Yamazaki]{Kazutoshi Yamazaki$^{**}$}
\thanks{$^{**}$School of 
Mathematics and Physics, The University of Queensland, St Lucia,
Brisbane, QLD 4072, Australia. Email: k.yamazaki@uq.edu.au. In part supported by JSPS KAKENHI grants no.\ 19H01791, 20K03758, and 24K06844, Open Partnership Joint Research
Projects grant no.\ JPJSBP120209921, and the start-up grant from the
University of Queensland.  }

\date{}

\begin{abstract} 
We introduce a new non-zero-sum game of optimal stopping with asymmetric exercise opportunities. 
Given a stochastic process modelling the value of an asset, one player observes and can act on the process continuously, while the other player can act on it only periodically at independent Poisson arrival times. 
The first one to stop receives a reward, different for each player, while the other one gets nothing.  
We study how each player balances the maximisation of gains against the maximisation of the likelihood of stopping before the opponent. 
In such a setup, driven by a \lev process with positive jumps, we not only prove the existence, but also explicitly construct a Nash equilibrium with values of the game written in terms of the scale function. Numerical illustrations with put-option payoffs are also provided to study the behaviour of the players' strategies as well as the quantification of the value of available exercise opportunities.
\\ 
\\
\noindent \small{\noindent  {AMS 2020} Subject Classifications: 
60G51, 
60G40, 
91A15, 
90B50. 
\\
\textbf{Keywords:} optimal stopping, \lev processes, non-zero-sum game, periodic observations}
\end{abstract}

\maketitle

\vspace{-5mm}
\section{Introduction}

We consider a game of timing between two players who are after the same underlying asset, whose value is evolving stochastically according to a spectrally positive \lev process $X$. 
The players can choose when to stop the game based on their (asymmetric) available opportunities, and the first one to stop receives an associated reward, while the other one gets nothing. 
More precisely, player $C$ can observe the process $X$ {\it continuously} and stop it without delay. 
On the contrary, player $P$ can stop the process $X$ only {\it periodically} at  random times, given by the jump times of an independent Poisson process with rate $\lambda > 0$. 
Both players are aware of competition, and player $C$ knows (can estimate) that the opponent's rate of exercise opportunities is $\lambda>0$, but cannot know the actual (random) times of these exercise opportunities (only finds out after player $P$ stops). 
The objective of this paper is to analyse the aforementioned asymmetry in exercise opportunities and answer   
the question {\it ``What is the optimal strategy for each of the two players, that balances their maximisation of gains (resp.,\ minimisation of costs) against the simultaneous likelihood maximisation of acting before their opponent?''}. 
Given that each player tries to optimise their individual reward/cost based on their available exercise opportunities, this competition can be formulated as a new non-zero-sum game (NZSG) of optimal stopping under continuous versus periodic exercise opportunities.

Even though player $C$ faces no constraints in optimising their decision making process, clearly player $P$ must optimise their own decision making under the exogenous constraint of having  only periodic exercise opportunities available in their arsenal. 
Such constraints on a player's ability to act on the underlying asset may represent the liquidity effect, where the random times (of Poisson jumps) model the times at which decision making becomes available to the player. 
In particular, liquidity restrictions involving Poisson exercise opportunities, were used in the optimal exercise of perpetual American options \cite{DW2002} and lookback options \cite{GL05} (optimal stopping problems; see also \cite{HZ22} for an extension including the ability to control the frequency of opportunities at a cost),  
in reversible investment strategies \cite{LW16} (optimal switching problem), 
in the optimal conversion/calling strategies in convertible bonds \cite{LS19} (zero-sum game), 
and in the optimal portfolio rebalancing \cite{RZ02} (optimal control problems; see also \cite{PT08, APW14} for other related portfolio optimisation problems). 
%
In this sense, our game extends the applications in decision making under liquidity constraints to the category of constrained NZSGs of optimal stopping, where one of the players faces no constraint while the other one faces liquidity constraints and can act only periodically, when an opportunity arises. 
\vspace{1mm}
{\bf Our Motivation.}
This framework can have a number of applications in real-world business situations, where operations research has a core role to play (see, e.g.~\cite{SKSS08} and references therein). We motivate our game via the scenario when 
two firms, players $C$ and $P$, aim at acquiring another firm, which could be financially-distressed, or simply a new entity  in the market (e.g.\ start-up).  
In particular, the acquisition of financially distressed firms is of significant importance in the field of corporate finance (see \cite{Hotchkiss, JoryMadura2009, Zhou} and references therein). 
The primary benefit and motivation for firms $C$ and $P$ to acquire the target firm are the opportunities to enlarge and diversify their business by absorbing the firm at a low cost. 
Each firm thus wants to wait until the market value $X$ (or cost of acquisition) of the target firm is sufficiently low, since their rewards are naturally decreasing in $X$, by taking into account that $X$ may also experience positive jumps.  
Such target firms could be venture capital, start-ups, or R\&D firms, with potential upward jumps due to successful investments, technological breakthroughs and innovations, e.g.\ the successful development of new drugs by a pharmaceutical firm and the acquisition of patents for new technologies
(the modelling of values $X$ of such firms by spectrally positive \lev processes is common in the literature, e.g.\ \cite{BE} for venture capital management, \cite{Avanzi, BKY} for optimal dividend problems). 
Such value dynamics require firms to carefully assess the market conditions and make strategic decisions about if and when to acquire the target firm.
However, the main trade-off is that each firm is competing with their opponent, who also aims at acquiring the same target firm, hence they need to make a buy-out decision early enough before the target firm is bought by their opponent. 
Finally, their strategic decision making should also take into consideration that firm $C$ has no constraints while firm $P$ faces potential decision making constraints. 

To give more context, firm $C$ may be an established, large corporation with a diversified business (e.g.\ General Electric (GE), SONY, Samsung, etc., engaging in various areas of the market, such as electricity, insurance, banking, and entertainment), a stable customer base and better access to credit, which afford them financial flexibility. Firm $C$ is thus more consistently engaged in the game with the freedom to make decisions at any point.
On the contrary, firm $P$ may be of smaller size and less diversified, who often faces more significant financial limitations (e.g. due to limited access to credit, lower cash reserves, smaller customer base). Due to these liquidity constraints, firm $P$ has a lower frequency of opportunities to stop first, and becomes part of the game periodically when specific conditions or opportunities align with their constraints. 
This dynamic adds an interesting asymmetry and a strategic element to the decision making in this game, highlighting the strategic challenges and imbalances associated with liquidity constraints.
It is also worth noting that the benefit of acquisition is lower for firm $C$ than for $P$ (see, e.g.\ \cite{Zhou}), due to the lower synergy effect on the business of already-diversified firms, as a consequence of acquisition, compared to less-diversified ones, and the associated costs tend to increase in the firm's size and complexity.

Naturally, this competition between firms $C$ and $P$ involves their strategic decision-making to acquire a target firm at a favourable low value $X$. The challenge is to determine each firm's optimal strategy, considering the trade-offs amongst the target firm's value $X$, potential future positive jumps in $X$, the fact that once the target firm is acquired the game ends, the liquidity constraints of firm $P$, the potential lower benefit of acquisition for firm $C$, and of course the potential actions of each firm's competitor. This scenario therefore gives rise to the constrained NZSG of optimal stopping proposed in this paper.

For the interested reader, we present below some alternative real-life scenarios that also share the characteristics of this game. 

{\it Retail inventory supply.}
Two retailers (players $C$ and $P$) are competing for purchasing a limited supply/inventory of a specific product from a common supplier. 
Retailer $C$ may be an established, larger entity in the market, with a stable customer base, better access to credit and flexibility to make decisions at any point. 
Retailer $P$ may be a smaller or newer entity in the market with financial limitations and higher liquidity constraints, who consequently has less opportunities to stop first.  
Both retailers observe the dynamic pricing offered by the supplier for the inventory, whose variations exhibit both a traditional Brownian uncertainty, modelling the continuous or gradual price adjustments based on market conditions (e.g.\ raw material cost fluctuations) or the scarcity premium (price increases as the product becomes scarcer), as well as an additional element of uncertainty (unpredictability) coming from positive price jumps over time. The latter jumps are linked to sudden changes in market conditions (e.g.\ increased demand, unexpected shortages, changes in the cost of production, political instability, sanctions against countries, transportation difficulties). 
A prominent example is that of rare earth materials and rare metals, which tend to experience positive jumps in pricing due to geopolitical instability and market domination \cite{FT_rare_metal2,FT_rare_metal1}.
In addition, retailer $C$ may incur a cost penalty when stopping first, representing the trade-off for their flexibility to make decisions more freely, missed bulk purchase discounts, or a fee for leveraging this advantageous position. This introduces an additional layer of complexity and strategic decision-making, which aligns with the idea that advantages in strategic positioning often come with certain costs. 

{\it Technology procurement in IT industry}.
Player $C$ is a well-established technology company with a solid financial foundation (larger market presence, historical stability, no liquidity constraints) and player $P$ is an innovative start-up in the IT industry, operating with higher liquidity constraints due to its smaller size (priority is stability in the dynamic technology sector). Both companies wish to buy hardware from a technology supplier, who offers dynamic pricing (e.g.\ reflecting the rapid and sudden evolution of technology and market dynamics) and companies must decide when to make procurement decisions. The strategic position of the more established technology company (player $C$) could be acknowledged by a potential cost penalty (e.g.\ tied to missing out on future dynamic pricing advantages, early adoption incentives, exclusive deals), or the cost for remaining actively involved throughout the game. A couple of examples of such hardware are graphic cards, whose sudden increase in prices was related to Bitcoin miners buying large amounts \cite{Wilson}, international politics \cite{FT_chips2,FT_chips1}, and the recent shortage of chip-making equipment (e.g.\ lithography machines for manufacturing semiconductors) contributing to positive jumps in semiconductor prices \cite{Jeon23}.

{\it Real estate development bidding}. 
Player $C$ is an experienced and well-established real estate developer with a proven track record and better access to financial resources (operates without liquidity constraints) and player $P$ is a small real estate development firm entering the market, facing higher liquidity constraints due to limited resources and a smaller portfolio. Both real estate developers wish to bid for land from a landowner (supplier), who offers dynamic pricing of the land (e.g.\ based on planned or sudden urban development, changes in zoning laws, unexpected demand) and developers must decide when to submit bids to acquire the land and develop their projects. 
In general, real estate prices tend to experience positive jumps due to sudden changes in economics and migration \cite{Kiss,Pontines}.
The trade-off for the well-established real estate developer (player $C$) could be recognised by a potential cost penalty (e.g.\ related to missing favourable pricing terms, early access to prime locations, exclusive opportunities), or the cost for remaining actively involved throughout the game (see, e.g.\ \cite{Grenadier} for another two-player game in real estate development from the `time-to-build’ viewpoint).

\vspace{1mm}
{\bf Our mathematical contributions.}
The main contributions of the paper are the following: 
\begin{enumerate}
\item 
 we prove the {\it existence} of a Nash equilibrium for this class of NZSGs of optimal stopping with 
{continuous versus periodic exercise opportunities} 
and concave and decreasing reward functions. 

\vspace{1mm}
\item 
we develop a methodology that not only achieves the crucial theoretical existence of a Nash equilibrium, but also provides its {\it explicit construction}, which is of fundamental importance to applications;  

\vspace{1mm}
\item 
this explicit nature of our results allows us to also 
$(i)$ find a {\it unique} Nash equilibrium that is Pareto-superior to any other in case there is more than one, 
and present $(ii)$ a straightforward numerical in-depth analysis of a case study with put-option-type payoffs,
$(iii)$ {\it comparative statics} with respect to the rate $\lambda$ of player $P$'s exercise opportunities, 
as well as 
$(iv)$ a numerical study of the {\it value of available exercise opportunities}. 
\end{enumerate}
To the best of our knowledge, this is the first paper studying a NZSG of optimal stopping with continuous versus periodic exercise opportunities. 

The literature on NZSGs of optimal stopping is mainly concerned with the existence of a Nash equilibrium under continuous observations. 
They have been studied via different methodologies, including 
quasi-variational inequalities (QVIs) \cite{BF}, 
QVIs in Dirichlet forms \cite{Nagai},
stochastic processes theory \cite{Etourneau86},
backward induction \cite{Ohtsubo87},
potential theory of Ray-Markov processes \cite{CattiauxLepeltier90}, 
the martingale approach \cite{HuangLi90},  
Snell envelope theory \cite{HZ10} and special utility-based arguments for certain applications \cite{Kuhn04} (see also subgame-perfect equilibrium methods \cite{RS14} and $\varepsilon$--equilibria \cite{Morimoto87, LarakiSolan13}). 
Beyond existence results, examples where such equilibria have been constructed are very limited and are always under a setting of
{continuous observations of continuous processes}, such as convertible bonds pricing \cite{CDW10} and derivation of sufficient conditions for threshold strategy optimality \cite{Att17, DeAFM}. 

A random periodicity in 
{exercise opportunities} has so far been considered in one-player optimal stopping settings {and zero-sum-games (ZSGs)} with various applications in the literature. 
The American put/call option with exercise times restricted to be Poisson arrival times has been studied by \cite{DW2002} in a Brownian motion model, \cite{Lempa12} in a linear diffusion model, \cite{PY_American} in a \lev model and {\cite{PPY2021} with negative discount rate}, 
while an endogenous bankruptcy (Leland-Toft model) in the Poisson observation setting was studied by \cite{PPSY2020}. 
{The shape of the value function under optimal stopping of diffusions over Poisson jump times was then studied theoretically in \cite{Hob21}, while the inclusion of stochastic control over the rate of Poisson jumps was further considered in \cite{HZ22}.
Finally, a zero-sum game (ZSG) under Poisson exercise opportunities was recently studied in \cite{LS19}.}
On the other hand, to the best of our knowledge, NZSGs of optimal stopping, featuring periodic 
{exercise opportunities} have not been studied before.

In this paper, we expand the literature on NZSGs of optimal stopping both by proving the existence of a Nash equilibrium and by explicitly constructing the optimal threshold strategies, in a novel setting where $(i)$ the underlying dynamics is given by a L\'evy model with positive jumps and $(ii)$ players have asymmetric 
{exercise opportunities (continuous versus periodic)}.  
Either of these features sets our framework outside the well-developed standard theory (see, e.g. references above).
Drawing on the theory of optimal stopping for diffusion processes is also non-feasible as it has limitations when dealing with jumps.
It is also worth mentioning that, in zero-sum games of optimal stopping, the characterisation of threshold equilibria is given by the optimisation of a single expected reward, while in our NZSG we deal with the joint optimisation of two coupled expected rewards, whose complexity amplifies due to the 
{asymmetry in exercise opportunities}.

In order to tackle our game and the aforementioned complexity, we develop in this paper a more ``direct'' approach for proving the existence and eventually for explicitly constructing a Nash equilibrium. 
We firstly obtain optimality in the restricted class of ``threshold-type'' strategies, by combining our probabilistic approach based on the cutting-edge fluctuation theory of L\'evy processes with a traditional best response approach. Then, building on these results, we upgrade the optimality over all possible stopping times. 
To achieve this, we propose an amalgamated methodology which involves: 
(i) a verification approach for half of the coupled system (associated to player $P$), extending results in \cite{DW2002} to L\'evy models and random time horizon; 
(ii) a reformulation of the other half of the coupled system (associated to player $C$, for which the verification approach is non-feasible) to one involving stochastic path-dependent killing;
(iii) an introduction of an average problem approach (developed in \cite{LZ}, \cite{RZ1} and \cite{Surya} for single-player problems) to  solve our NZSG with asymmetric exercise opportunities.

To be more precise, we firstly use a {\it probabilistic approach} to obtain the values associated with ``threshold-type strategies'' in terms of scale functions. 
We then show the existence and explicitly construct a Nash equilibrium in the class of threshold strategies via first-order conditions, for a wide class of underlying \lev processes with positive jumps. 
This prescribes that player $C$ stops (i.e. buys the asset) at the first time the asset price $X$ decreases to (or is below) a threshold $a^*$, while player $P$ stops at the first 
{exercise opportunity} when $X$ is below a threshold $l^*$ such that $l^* > a^*$, illustrated in Figure \ref{plot_illustration}.
The latter is similar to the threshold strategy in a discrete-time model, where $X$ may cross below $l^*$ and before an exercise opportunity appears, may recover above it resulting in player $P$ not stopping. 
What makes our problem interesting is that while $X$ is below $l^*$, it may even go below $a^*$ before an exercise opportunity appears for player $P$, resulting in player $C$ exercising instead. 
The power of our direct approach is also reflected by the fact that it does not rely on (or a priori assume) neither the continuous nor the smooth-fit condition. 
Nevertheless, we prove a posteriori that the value function of player $C$ is $C^1$ at $a^*$ in both unbounded and bounded variation L\'evy models and that of player $P$ is $C^1$ at $l^*$ (resp.\ $C^2$) when it is of bounded (resp.\ unbounded) variation 
(see Proposition \ref{smooth}). 

\begin{figure}[htbp]
\vspace{-3mm}
\begin{center}
\begin{minipage}{1.0\textwidth}
\centering
\begin{tabular}{cc}
 \includegraphics[scale=0.55]{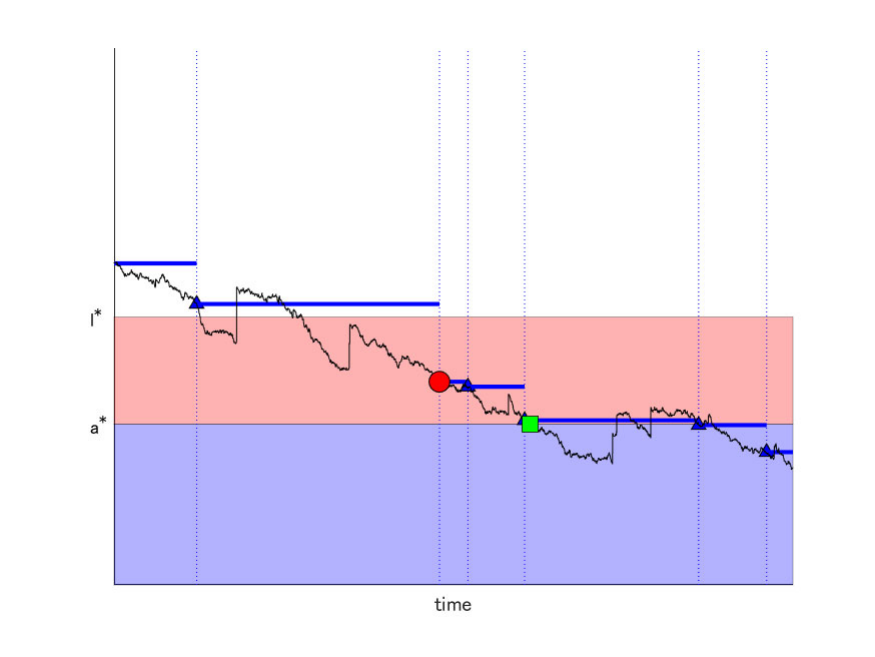} &  \includegraphics[scale=0.55]{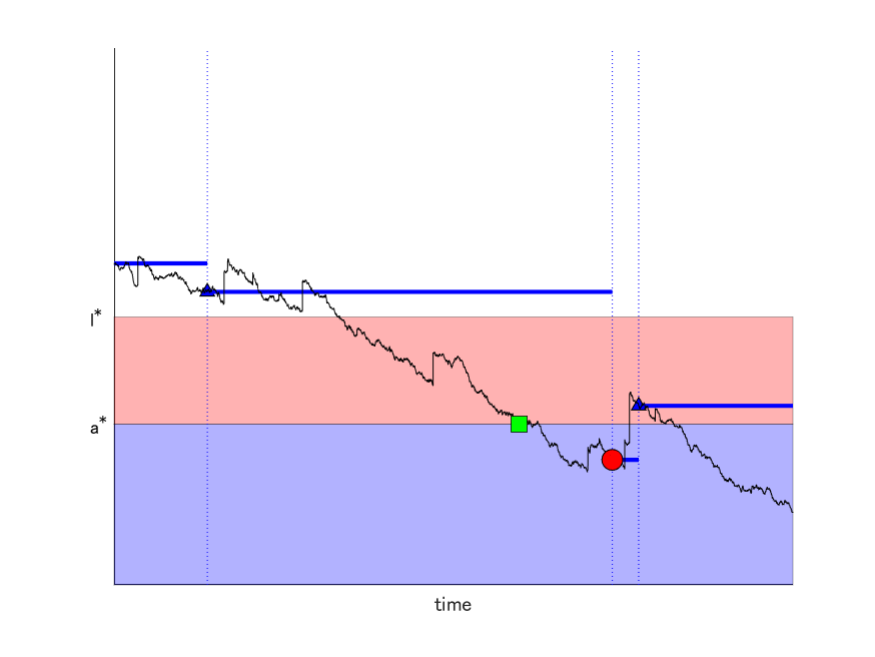} \\
 \textbf{Case 1} & \textbf{Case 2}
\end{tabular}
\vspace{-4mm}
\caption{Illustration of player $C$ and player $P$'s stopping strategies. 
The solid black trajectory shows the path of $X$ and the piecewise horizontal blue lines show player $P$'s most recent 
{exercise opportunity, whose} observation times are shown by dotted vertical lines. 
Given some $l^* > a ^*$, player $P$ stops at the first observation time of $X$ below $l^*$ (indicated by red circles) and player $C$ stops at the classical hitting time below $a^*$ (indicated by green squares). 
\textbf{Case 1} shows the scenario when player $P$ 
{has an exercise opportunity} when $X$ is in $(a^*, l^*]$ (shown by the red strip) and thus exercises first. 
\textbf{Case 2} shows the scenario when player $P$ does not get 
{an exercise opportunity before $X$} enters $(-\infty, a^*]$ (shown by the blue strip) and thus player $C$ exercises first.} 
\label{plot_illustration}
\end{minipage}
\end{center}
\end{figure}

We further prove that these threshold strategies form a Nash equilibrium in the strongest sense possible -- optimal amongst all possible stopping strategies.
A standard verification approach has limitations when used to upgrade the ``optimality'' of player $C$'s threshold strategy under the generality of our problem formulation in L\'evy models and reward functions (additional assumptions seem to be required on both the model and rewards, that are also hard to verify). 
Hence, we take a different route for player $C$, based on our already obtained results and a transformation of the problem to one with stochastic path-dependent discounting and analyse the latter via an average problem approach. 
Then, we prove that the value of player $P$'s threshold strategy is sufficiently regular (see Proposition \ref{smooth}) to apply It\^o's formula, and then we develop a verification result (Lemma \ref{verification_split}) with variational inequalities to upgrade their ``optimality''. 
Combining these two different routes for players $C$ and $P$ leads to the NZSG's solution.
In all, our proposed amalgamated methodology allows for the construction of a Nash equilibrium and verification of its optimality over all possible stopping strategies for each player, without imposing further assumptions than the initial mild ones.  
We further point out that our ideas and techniques can be replicated to deal with a much wider range of games with such an asymmetry of exercise opportunities. 
In addition, by flipping $X$ and modifying reward functions accordingly, the considered class of problems also encompasses the case driven by spectrally negative \lev processes.
Finally, the explicit nature of our results allows for accessible numerical studies and sensitivity analyses.

\vspace{1mm}
{\bf Structure of the paper.}
The rest of the paper is organised as follows. In Section \ref{problem}, we provide a mathematical formulation of the game between the two players with continuous and periodic 
{exercise opportunities}, which can be understood for a general class of stochastic processes $X$ (not only L\'evy models).
In Section \ref{sec:SP}, we review the fluctuation theory of \lev processes and the scale function. In particular, in Section \ref{fluct_new}, we also develop new identities required for expressing the expected rewards under threshold strategies, which can also be applied to study other optimal stopping problems and games under such 
{an asymmetry of exercise opportunities}; 
these could be natural directions for future research.  
In Section \ref{section_threshold_strategies}, we obtain a Nash equilibrium in a weak sense, by restricting the set of strategies to those of threshold-type. 
In Section \ref{section_optimality_general}, 
we strengthen the optimality and show that the one obtained in Section \ref{section_threshold_strategies} is actually 
a Nash equilibrium in the strong sense (with strategy sets given by general sets of stopping times). We conclude the paper with numerical results on strategy optimality, comparative statics and the value of 
{available exercise opportunities} in Section \ref{section_numerics}. Several long or technical proofs are deferred to the Appendices \ref{fluct_proof}--\ref{verification_split_proof}.

\section{Problem formulation} \label{problem}


Let $X$ be a real-valued Markov process and $N$ be an independent Poisson process with rate $\lambda > 0$, defined on a probability space $(\Omega,\mathcal{H},\p)$.
Then, denote by $(T^{(n)})_{n \in \N}$ the sequence of jump times of $N$ with inter-arrival times given by independent exponential random variables with parameter $\lambda$ (we use the convention $\N := \{1,2,\ldots \}$). 
We denote by $\p_x$ the law of $X$ started at some fixed $x \in \R$ and by $\mathbb{E}_x$ the corresponding expectation.

We consider the following two players.  
The first one is player $C$, who can observe and stop $X$ without delay, thanks to their continuous access to $X$.
The filtration $\mathbb{F}= (\mathcal{F}_t)_{t\geq 0}$ of player $C$'s exercise opportunities is therefore the natural filtration of $X$ given by $\F_t := \sigma(X_s\,|\, 0 \leq s \leq t)$, so that player $C$ can choose any stopping time with respect to $\mathbb{F}$.
The second one is player $P$, who can stop $X$ only periodically, at the Poisson observation times $(T^{(n)})_{n\in\N}$. 
We define ${\mathbb{G}} := ({\mathcal{G}}_n)_{n \geq 1}$ for $\G_n := \sigma(T^{(k)}, X_{T^{(k)}} \,|\, 1 \leq k \leq n)$, so that player P can stop at $T^{(M)}$ for any $\mathbb G$--stopping time $M$ (see Remark \ref{remark_filtration} for an equivalent formulation).
We further note that, player $C$ knows of the existence of competition with player $P$, knows the opponent's rate of 
{exercise opportunities} is $\lambda>0$, but cannot know the actual (random) times $(T^{(n)})_{n\in\N}$ of 
these exercise opportunities (they are not part of $\mathbb{F}$).
On the other hand, player $P$ also knows of the competition arising from the existence of player $C$.

\begin{remark} \label{remark_filtration}
Let $\tilde{\mathbb{G}} := (\tilde{\mathcal{G}}_n)_{n \geq 1}$ for $\tilde{\mathcal{G}}_n := \mathcal{H}_{T^{(n)}}$, $n \geq 1$, where  $(\mathcal{H}_t)_{t \geq 0}$ defines the filtration generated by the two-dimensional process $(X,N)$. 
Notice that $\mathcal{G}_n \subset \tilde{\mathcal{G}}_n$ for all $n \geq 1$. In particular, ${\mathbb{G}}$ represents a restricted information set on $X$ which is updated only when an exercise opportunity arises, while $\tilde{\mathbb{G}}$ represents full information on $X$ (as for player $C$) despite player $P$'s restricted periodic exercise opportunities. 
The considered problem is equivalent to the version when $\mathbb{G}$ is replaced with $\tilde{\mathbb{G}}$. 
The mathematical analysis in this paper applies to both information scenarios for player $P$, hence the choice of a particular framework relies solely on the application under consideration. 
\end{remark}

The aforementioned players compete against each other in the following non-zero-sum game (NZSG) of optimal stopping 
of the same asset, where the first one to stop receives an associated reward. 
To be more precise, 
player $C$ (resp.,\ $P$) aims at maximising a discounted reward function $f_c:\mathbb{R} \to \mathbb{R}$ (resp.,\ $f_p:\mathbb{R} \to \mathbb{R}$) 
by stopping the game before player $P$ (resp.,\ $C$), 
otherwise receives nothing. 
Both players discount their future gains with a constant discount rate $q>0$. 
Even though they are both after the same asset, the additional 
{exercise opportunities} provided to player $C$ (as opposed to player $P$) yield an additional fee for player $C$, if and when successfully stopping before~player~$P$.  

A pair of stopping times $(\tau, \sigma)$ in this game consists of  $\tau \in \mathcal{T}_c$ and $\sigma \in \mathcal{T}_p$,
where $\mathcal{T}_c$ is the set of $\mathbb{F}$-stopping times and  $\mathcal{T}_p := \{ T^{(M)}: \textrm{$M$ is a $\mathbb G$--stopping time}\}$.
This means that while player $C$ can stop in the ``usual'' way, player $P$ can stop only at the Poisson observation times.
%
Each player aims at maximising their expected discounted reward functions given by 
\begin{align*} 
V_c(\tau, \sigma; x) := \mathbb{E}_x \Big[ e^{-q \tau} f_c(X_{\tau}) 1_{\{ \tau <\sigma\}} \Big] 
\quad \text{and} \quad 
V_p(\tau, \sigma; x) := \mathbb{E}_x \Big[ e^{-q \sigma} f_p(X_{\sigma}) 1_{\{ \sigma < \tau\}} \Big]. 
\end{align*}
The main aim of this paper is therefore 
to obtain,  for each $x \in \R$,  a Nash equilibrium $(\tau^*,\sigma^*) \in \mathcal{T}_c \times \mathcal{T}_p$ such that
\begin{align} \label{Nasheq}
\begin{split}
&V_c(\tau^*,\sigma^*; x) \geq V_c(\tau,\sigma^*; x),  \quad \forall\; \tau  \in \mathcal{T}_c, \\
&V_p(\tau^*,\sigma^*; x) \geq V_p( \tau^*,\sigma; x), \quad \forall\; \sigma \in \mathcal{T}_p.
\end{split} 
\end{align}

\section{Fluctuation identities}
\label{sec:SP}


Throughout this paper, we focus on the case when $X=\{X_t:t\geq 0\}$ is
a spectrally positive L\'evy process   whose \emph{Laplace exponent} is given by
\begin{align}
\psi(s)  := \log \E \left[ e^{-s X_1} \right] =  \gamma s +\frac{1}{2}\nu^2 s^2 + \int_{(0,\infty)} (e^{-s z}-1 + s z 1_{\{0 < z < 1\}}) \Pi (\diff z), \quad s \geq 0, \label{laplace_spectrally_positive}
\end{align}
where $\Pi$ is a \lev measure on $(0,\infty)$ that satisfies the integrability condition $\int_{(0,\infty)} (1 \wedge z^2) \Pi(\diff z) < \infty$. 
{Note that, the process $X$} has paths of bounded variation if and only if $\nu = 0$ and $\int_{( 0,1)}z\, \Pi(\diff z) < \infty$; in this case, we write \eqref{laplace_spectrally_positive} as
\begin{align*}
\psi(s)   =  \mu  s +\int_{(0,\infty)} (e^{-s z}-1 ) \Pi (\diff z), \quad s \geq 0, 
\quad \text{where} \quad \mu := \gamma + \int_{( 0,1)}z\, \Pi(\diff z).
\end{align*}
We exclude the case in which $X$ is a subordinator (i.e., $X$ has monotone paths a.s.). This implies that $\mu > 0$ when $X$ is of bounded variation. 

In the sequel, we will prove that, for a large class  of reward functions $f_p$ and $f_c$ satisfying only certain mild assumptions, a pair of threshold strategies leads to the Nash equilibrium of our NZSG. 
In particular, player $C$'s optimal strategy will be to stop at the first down-crossing time of some level, while player $P$'s optimal strategy will be to stop at the first Poisson time at which the process is below some other level. To this end,
we further denote, for $b \in \R$, the random times 
\begin{align} \label{hittingtimes} 
\tau_{b}^- :=\inf\{t>0:X_t<b \}  \in \mathcal{T}_c 
\quad \textrm{and} \quad  
T_b^- := \inf \{ T^{(n)}: X_{T^{(n)}} < b \} \in \mathcal{T}_p,	
\end{align}
where we recall $(T^{(n)})_{n\in\N}$ are the jump times of an independent Poisson process with rate $\lambda$. 
%
The objective of this section is to thus derive the expressions, 
for $x \in \R$ and $a \leq l$, of the values of these threshold strategies
\begin{align} \label{vf_0PC}
v_c(x;a,l) := \E_x \Big[  e^{-q \tau_{a}^-}f_c(X_{\tau_{a}^-}) 1_{\{\tau_{a}^-<T_l^-\}} \Big],
\quad \text{and} \quad
v_p(x;a,l) := \E_x \Big[ e^{-qT_l^-}f_p(X_{T_l^-}) 1_{\{T_l^-< \tau_{a}^-\}} \Big] 
\end{align}
in terms of the scale function of $X$. Note that $T_l^- \neq \tau_{a}^-$ a.s.\ thanks to the independence between $X$ and $N$.


\begin{remark} \label{remark_when_a_geq_l}
Notice that, for all threshold strategies with choices of $l \leq a$ 
and all $x \in \R$ in \eqref{vf_0PC}, we have $\tau_a^- < T_l^-$ a.s., hence 
$$v_c(x;a,l) = v^o_c(x;a) := \E_x \Big[  e^{-q \tau_{a}^-}f_c(X_{\tau_{a}^-}) 1_{\{ \tau_{a}^- < \infty \}}\Big] 
\quad \text{and} \quad v_p(x;a,l) = 0$$
which boils down to a one-player maximisation problem for player $C$, since player $P$ does not participate in the game under such a choice of $l \leq a$. 
\end{remark}

\subsection{Scale functions} 
We denote by $W^{({q})}:\R \to [0, \infty)$ the $q$-scale function corresponding to $X$ for $q > 0$.  
It takes value zero on the negative half-line, while on the positive half-line it is the unique continuous and strictly increasing function defined by
\begin{align*} 
\int_0^\infty e^{-\theta x} W^{({q})}(x) \, \diff x 
&= \frac 1 {\psi(\theta)-q}, \quad \theta > \Phi({q}),
\quad \text{where} \quad 
\Phi(q) := \sup \{ s \geq 0: \psi(s) = q\}.
\end{align*} 
\begin{remark} \label{remark_scale_function_properties}
Some known properties of the scale function $W^{(q)}$ are summarised as follows:
\begin{enumerate}
\item[(i)] The scale function $W^{(q)}$ is differentiable a.e. In particular, if $X$ is of unbounded variation or the \lev measure $\Pi$ is atomless, it is known that $W^{(q)}$ is $C^1(\R \backslash \{0\})$; see, e.g.,\ \cite[Theorem 3]{Chan2011}.
\item[(ii)] As in Lemma 3.1 of \cite{KKR}, we have
\begin{align*}
\begin{split}
W^{(q)} (0) &= \left\{ \begin{array}{ll} 0 & \textrm{if $X$ is of unbounded
variation,} \\ \mu^{-1} & \textrm{if $X$ is of bounded variation.}
\end{array} \right.  
\end{split}
\end{align*}
\end{enumerate}
\end{remark}

We also define for $r>0$,
\begin{align*} 
Z^{(r)}(x; \theta) &:= e^{\theta x} \left( 1 + (r- \psi(\theta )) \int_0^x e^{-\theta u} \, W^{(r)}(u) \du\right), 
\quad x \in \R, \, \theta \geq 0,
\end{align*}
which further yields 
\begin{align} 
\label{ZqlPq}
Z^{(q+\lambda)}(x; \Phi(q)) 
&=e^{\Phi(q) x} \left( 1 + \lambda \int_0^x e^{-\Phi(q) u} \, W^{(q+\lambda)}(u) \du 
\right), \quad x \in \mathbb{R} ,
\\
\label{Z_Phi_der}
Z^{(q+\lambda)\prime}(x; \Phi(q) ) &= \Phi(q) Z^{(q+\lambda)}(x; \Phi(q) ) + \lambda  W^{(q+\lambda)}(x), \quad x \in \R \backslash \{0\}.
\end{align}
The following related result is proved in Appendix \ref{proof_W/Z}.
\begin{lemma} \label{W/Z}
The mapping $u \mapsto W^{(q+\lambda)}(u) / Z^{(q+\lambda)}(u;\Phi(q))$ is increasing on $(0,\infty)$.
\end{lemma}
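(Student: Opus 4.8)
The plan is to show the ratio $g(u) := W^{(q+\lambda)}(u) / Z^{(q+\lambda)}(u;\Phi(q))$ is increasing on $(0,\infty)$ by a combination of an explicit change-of-measure (Esscher transform) representation and a monotonicity argument. First, I would recall the standard fact from fluctuation theory that the function $W_{\Phi(q)}(u) := e^{-\Phi(q)u} W^{(q+\lambda)}(u)$ is itself the $\lambda$-scale function $W_{\Phi(q)}^{(\lambda)}$ of the spectrally positive L\'evy process $X$ under the Esscher-transformed measure $\mathbb{P}^{\Phi(q)}$ (whose Laplace exponent is $\psi_{\Phi(q)}(s) = \psi(s+\Phi(q)) - q$, so that the "new" $q$-value is $\lambda$). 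Under this transform, by \eqref{ZqlPq} we similarly have $e^{-\Phi(q)u}Z^{(q+\lambda)}(u;\Phi(q)) = 1 + \lambda\int_0^u W_{\Phi(q)}^{(\lambda)}(y)\,\diff y =: Z_{\Phi(q)}^{(\lambda)}(u)$, the $\lambda$-version of the $Z$-function (at parameter $\theta=0$) for the transformed process. Crucially, the exponential factors $e^{\Phi(q)u}$ cancel in the ratio, so $g(u) = W_{\Phi(q)}^{(\lambda)}(u) / Z_{\Phi(q)}^{(\lambda)}(u)$, and it suffices to prove this reduced ratio is increasing; this removes the $\Phi(q)$-dependence from the problem entirely and reduces it to a clean statement about scale functions of a spectrally positive L\'evy process.

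Next, I would differentiate. Writing $\widehat W := W_{\Phi(q)}^{(\lambda)}$ and $\widehat Z := Z_{\Phi(q)}^{(\lambda)}$, we have $\widehat Z' = \lambda \widehat W$ (a.e.), so
\begin{align*}
g'(u) = \frac{\widehat W'(u)\widehat Z(u) - \widehat W(u)\widehat Z'(u)}{\widehat Z(u)^2} = \frac{\widehat W'(u)\widehat Z(u) - \lambda \widehat W(u)^2}{\widehat Z(u)^2}.
\end{align*}
Since $\widehat Z(u) > 0$ for $u \geq 0$, the sign of $g'$ is the sign of $h(u) := \widehat W'(u)\widehat Z(u) - \lambda \widehat W(u)^2$. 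One checks $h(0) = \widehat W'(0+)\widehat Z(0) - \lambda \widehat W(0)^2 = \widehat W'(0+) - \lambda \widehat W(0)^2 \geq 0$: in the unbounded variation case $\widehat W(0)=0$ and $\widehat W'(0+) \in (0,\infty]$, while in the bounded variation case $\widehat W(0) = 1/\widehat\mu$ and $\widehat W'(0+) = \widehat W(0)^2(\lambda + \widehat\Pi(0,\infty))/... $ — more directly, the known identity $\widehat W'(0+)/\widehat W(0)^2 = \lambda + \widehat\Pi((0,\infty))\ge\lambda$ (with $\widehat\Pi$ the transformed L\'evy measure, noting the Esscher transform preserves the bounded-variation structure and only tilts the L\'evy measure) gives $h(0)\ge 0$. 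For $u > 0$, I would show $h' \geq 0$ a.e.: differentiating, $h'(u) = \widehat W''(u)\widehat Z(u) + \widehat W'(u)\widehat Z'(u) - 2\lambda \widehat W(u)\widehat W'(u) = \widehat W''(u)\widehat Z(u) - \lambda \widehat W(u)\widehat W'(u)$. So $h' \geq 0$ would follow from $\widehat W''(u)\widehat Z(u) \geq \lambda \widehat W(u)\widehat W'(u)$, i.e. from log-convexity-type control: $\widehat W''/\widehat W' \geq \lambda \widehat W/\widehat Z = \lambda g$. This is where the real content lies.

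The main obstacle is establishing the differential inequality $\widehat W''(u)/\widehat W'(u) \geq \lambda \widehat W(u)/\widehat Z(u)$ (equivalently controlling $h'$), since second derivatives of scale functions are delicate and need not even exist pointwise without regularity hypotheses on $\Pi$. I would circumvent pointwise second derivatives by an integrated/probabilistic argument instead: the cleanest route is likely to use the known renewal-type representations or, better, a direct probabilistic interpretation. Indeed $g(u) = \widehat W(u)/\widehat Z(u)$ has a probabilistic meaning — by classical two-sided and Poisson-observation exit identities (e.g. as developed in \cite{PPSY2020, PY_American} and the very identities of Section~\ref{fluct_new}), ratios of this form express quantities like $\mathbb{P}^{\Phi(q)}_u(\text{down-cross } 0 \text{ before an independent } \mathbf{e}_\lambda\text{-clock, starting from } u)$ normalised appropriately, or a ratio of such hitting functionals, and monotonicity in the starting point $u$ then follows from a pathwise coupling: starting higher makes down-crossing $0$ strictly harder relative to the competing exponential clock. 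I would make this precise by identifying $1 - cg(u)$ (for the right constant $c$) with a monotone hitting probability in $u$, or alternatively derive the inequality $h(u)\ge 0$ directly from the Wiener--Hopf-based formula for $\widehat W'$ (valid a.e.) and the fact that $\widehat\Pi$ is a measure on $(0,\infty)$, which forces the requisite sign. Either way, once $h \geq 0$ on $[0,\infty)$ is established — from $h(0)\ge 0$ plus $h$ nondecreasing, or directly — we conclude $g' \geq 0$ a.e. and, since $g$ is continuous (as a ratio of continuous functions with nonvanishing denominator), that $g$ is increasing on $(0,\infty)$, completing the proof.
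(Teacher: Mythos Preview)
Your Esscher-transform reduction to $g(u)=\widehat W(u)/\widehat Z(u)$ is correct and pleasant, but the argument stalls precisely where you say it does: the derivative route needs $h'(u)=\widehat W''(u)\widehat Z(u)-\lambda\widehat W(u)\widehat W'(u)\ge 0$, which you do not prove. Second derivatives of scale functions need not exist without extra regularity on $\Pi$, and even when they do, the inequality $\widehat W''/\widehat W'\ge \lambda\widehat W/\widehat Z$ has no obvious reason to hold; your fallback (``identify $1-cg(u)$ with a monotone hitting probability'' or ``derive $h\ge 0$ from a Wiener--Hopf formula'') is only gestured at. So as written there is a genuine gap: the core inequality is asserted, not established.

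The paper's proof is a one-liner that sidesteps all of this and, in effect, carries out cleanly the probabilistic idea you only hint at. Identity \eqref{laplace_upper} (with $q$ replaced by $q+\lambda$ and $\theta=\Phi(q)$) says that for $0<x<b$,
\[
Z^{(q+\lambda)}(b-x;\Phi(q))-\frac{Z^{(q+\lambda)}(b;\Phi(q))}{W^{(q+\lambda)}(b)}W^{(q+\lambda)}(b-x)
=\E_x\Big[e^{-(q+\lambda)\tau_b^+-\Phi(q)(X_{\tau_b^+}-b)};\,\tau_0^->\tau_b^+\Big]>0,
\]
since the right-hand side is the expectation of a strictly positive quantity on an event of positive probability. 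Relabelling $b-x\mapsto x$ and $b\mapsto y$ gives, for $0<x<y$,
\[
Z^{(q+\lambda)}(x;\Phi(q))-\frac{Z^{(q+\lambda)}(y;\Phi(q))}{W^{(q+\lambda)}(y)}W^{(q+\lambda)}(x)>0,
\]
which is exactly $\dfrac{W^{(q+\lambda)}(x)}{Z^{(q+\lambda)}(x;\Phi(q))}<\dfrac{W^{(q+\lambda)}(y)}{Z^{(q+\lambda)}(y;\Phi(q))}$. No derivatives, no Esscher transform, no smoothness assumptions. Note that after your Esscher reduction the same identity (with $\theta=0$) gives $\widehat Z(x)-\frac{\widehat Z(y)}{\widehat W(y)}\widehat W(x)>0$ directly, so the ``right constant $c$'' you were looking for is simply $c=\widehat Z(y)/\widehat W(y)$ for each fixed comparison point $y$.
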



The scale functions are closely linked with several known fluctuation identities that will be used for the derivation of  \eqref{vf_0PC}.
%
By Theorem 3.12 in \cite{K}, we have 
\begin{align}
\E_x [e^{-q \tau_0^-}; \tau_0^-<\infty] 
= e^{-\Phi(q) x}, \quad x \geq 0. 
\label{upcrossing_identity}
\end{align}
Let $\tau_{b}^+ := \inf\{t>0:X_t > b \}$ for $b \in \R$.
By identity (8.11) of \cite{K}, we have 
\begin{align}
\label{upcr}
\E_x \left[e^{-q \tau_0^-}; \tau_b^+> \tau_0^- \right] = \frac {W^{(q)}(b-x)} {W^{(q)}(b)}, \quad x,b \geq 0,
\end{align}
and, as in identity (15) of \cite{AKP} or (5) of \cite{Albrecher}, we also have
\begin{align}
\E_{x}\bigg[e^{- q \tau_b^+ - \theta (X_{\tau_b^+}-b)};\tau_{0}^- > \tau_b^+ \bigg] 
= Z^{(q)}(b-x;\theta)-\frac{Z^{(q)}(b;\theta)}{W^{(q)}(b)}W^{(q)}(b-x), \quad x,b, \theta \geq 0. \label{laplace_upper}
\end{align}
	
Using Theorem 2.7.(i) in \cite{KKR}, for any locally bounded measurable function $f$, constants $b > a$ and $x \geq a$, the resolvents are given by
	\begin{align}\label{resolvent_density_0}
	&\E_x\bigg[ \int_0^{\tau_{a}^-\wedge \tau_b^+} 
	e^{- q s} f(X_s) \diff s \bigg] =\int_0^{b-a} f(b-u) \bigg\{ \frac{W^{(q)}(b-x)}{W^{(q)}(b-a)} W^{(q)} (b-a-u) - W^{(q)} (b-x-u) \bigg\} \diff u. 
	\end{align}

\subsection{Computation of $v_c(x;a,l)$ and $v_p(x;a,l)$ in \eqref{vf_0PC}.}
\label{fluct_new}

We firstly fix $\lambda > 0$ and define
\begin{align} \label{olW}
\mathscr{W}^{(q, \lambda)}_b (x) 
:= W^{(q)}(x+b) + \lambda \int_0^x W^{(q+\lambda)}(x-u) \, W^{(q)}(u+b) \du 
\quad \forall\; x, b\in\R,
\end{align}
where, in particular, 
\begin{align} \label{olW0}
\mathscr{W}^{(q, \lambda)}_b (x) = W^{(q)}(x+b) , \quad \forall\; x \leq 0 
\quad \text{and} \quad b \in \R.	
\end{align}
The proof of the following result is given in Appendix \ref{proof_prop_cost_cont_player}. 

\begin{lemma}\label{Prop_cost_cont_player}
For $b\geq l\geq a$ and any locally bounded measurable function $f_c$ on $\R$, we have 
\begin{align}\label{iden_h_SP}
\E_x \left[e^{-q \tau_a^-} f_c(X_{\tau_a^-}) 1_{\{\tau_a^-< T_l^- \wedge \tau_b^+\}} \right] = \left\{  \begin{array}{ll}f_c(a) \, 
\frac{\mathscr{W}^{(q, \lambda)}_{b-l}(l-x)}{\mathscr{W}^{(q, \lambda)}_{b-l}(l-a)}, & x > a, \\ f_c(x), & x \leq a.
\end{array} \right.
\end{align}
\end{lemma}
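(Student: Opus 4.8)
The plan is to reduce the two-sided problem with the Poisson stopping time $T_l^-$ to the one-sided fluctuation identities \eqref{upcr}--\eqref{laplace_upper} by conditioning on the first Poisson arrival. For $x \le a$ the claim is trivial since $\tau_a^- = 0$ and $f_c(X_{\tau_a^-}) = f_c(x)$, so assume $x \in (a,b)$. Let $\mathbf{e}$ be the first jump time of $N$ (exponential, rate $\lambda$, independent of $X$). Conditioning on $\mathbf{e}$ and using that $T_l^- \ge \mathbf{e}$ on $\{\mathbf{e}$ occurs first$\}$, I would split the event $\{\tau_a^- < T_l^- \wedge \tau_b^+\}$ according to whether $\tau_a^- \wedge \tau_b^- < \mathbf{e}$ or not. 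Writing $g(x) := \E_x[e^{-q\tau_a^-} f_c(X_{\tau_a^-}) 1_{\{\tau_a^-< T_l^- \wedge \tau_b^+\}}]$, one obtains a renewal-type equation: the contribution before the first Poisson time uses the $(q+\lambda)$-discounted two-sided exit identities of $X$ from $(a,b)$, and upon the first Poisson arrival at some point $X_\mathbf{e} = y$ the process restarts, contributing $g(y)$ if $y \ge l$ (player $P$ does not stop) and $0$ if $y < l$ (player $P$ stops first, so the indicator fails). Since $X$ creeps downward and has only positive jumps, exit from $(a,b)$ below happens continuously at $a$, which is what produces the factor $f_c(a)$.

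More concretely, I expect the equation to take the form
\begin{align*}
g(x) = f_c(a)\, \E_x\!\left[e^{-(q+\lambda)\tau_a^-}; \tau_a^- < \tau_b^+\right]
+ \lambda \E_x\!\left[\int_0^{\tau_a^- \wedge \tau_b^+} e^{-(q+\lambda)s}\, g(X_s)\, 1_{\{X_s \ge l\}}\, \diff s\right],
\end{align*}
where the first term comes from down-crossing $a$ before any Poisson arrival (here $\E_x[e^{-(q+\lambda)\tau_a^-}; \tau_a^-<\tau_b^+] = W^{(q+\lambda)}(b-x)/W^{(q+\lambda)}(b-a)$ by \eqref{upcr} with parameter $q+\lambda$, wait — careful: with the extra $\lambda$ in the discount), and the integral term is the resolvent \eqref{resolvent_density_0} with $q$ replaced by $q+\lambda$ applied to the function $y \mapsto g(y)1_{\{y\ge l\}}$. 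The ansatz $g(x) = f_c(a)\,\mathscr{W}^{(q,\lambda)}_{b-l}(l-x)/\mathscr{W}^{(q,\lambda)}_{b-l}(l-a)$ should then be verified by plugging it into this equation: on $(a,l]$ one has $l - x \ge 0$ so $\mathscr{W}^{(q,\lambda)}_{b-l}(l-x)$ involves the convolution term and the indicator $1_{\{X_s\ge l\}}$ is inactive there, whereas on $[l,b)$ one has $l-x \le 0$ and by \eqref{olW0} $\mathscr{W}^{(q,\lambda)}_{b-l}(l-x) = W^{(q)}(b-x)$, matching the region where player $P$ can stop. The identity to check is essentially that $\mathscr{W}^{(q,\lambda)}_{b-l}$, as a function of $l - \cdot$, solves exactly this renewal equation — which can be seen either by a direct substitution using the convolution definition \eqref{olW} together with the known resolvent density $W^{(q+\lambda)}(b-a-u)W^{(q+\lambda)}(b-x)/W^{(q+\lambda)}(b-a) - W^{(q+\lambda)}(b-x-u)$, or by taking Laplace transforms in the spatial variable.

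The main obstacle I anticipate is the bookkeeping in the renewal equation, specifically getting the discount factors right: because the indicator $1_{\{X_s\ge l\}}$ multiplies $g$ inside the resolvent, one must carefully distinguish the region $(l,b)$ (where the Poisson arrival is ``lethal'' for the event) from $(a,l)$ (where it is harmless and the process simply restarts), and confirm that the piecewise definition of $\mathscr{W}^{(q,\lambda)}_{b-l}$ via \eqref{olW}--\eqref{olW0} is precisely engineered so that a single formula covers both regions. A secondary technical point is justifying the strong Markov / renewal argument rigorously (interchanging expectation and the integral, and handling that $X$ may exit at $a$ continuously); this is standard for spectrally positive L\'evy processes but should be stated. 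Once the renewal equation is established, verifying the closed-form solution is a routine computation with scale functions and their convolution identities, aided by the Laplace-transform characterisation of $W^{(q)}$ and the second resolvent identity $W^{(q+\lambda)} = W^{(q)} + \lambda\, W^{(q)} * W^{(q+\lambda)}$.
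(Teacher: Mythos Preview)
Your renewal-equation approach is correct and takes a genuinely different route from the paper. The paper does not write a single equation on $(a,b)$; instead it first uses the strong Markov property at $\tau_l^-$ to get $\tilde g(x)=\tilde g(l)\,W^{(q)}(b-x)/W^{(q)}(b-l)$ for $x\ge l$, then for $x\in[a,l)$ decomposes at the first exit from the \emph{smaller} interval $(a,l)$ before $T^{(1)}$, invoking an auxiliary overshoot identity (Lemma~\ref{flW-}) to evaluate $\E_x[e^{-(q+\lambda)\tau_l^+}W^{(q)}(b-X_{\tau_l^+});\tau_l^+<\tau_a^-]$; only at the single point $x=l$ does the paper use essentially your renewal equation to solve for the constant $\tilde g(l)$. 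Your route is more uniform and bypasses Lemma~\ref{flW-} entirely: plugging the ansatz into the global equation and computing the $(q+\lambda)$-resolvent of $g\,1_{[l,b)}$ via Lemma~\ref{aux_SP} (equivalently, the second-resolvent identity you mention) verifies equality in one stroke for all $x\in(a,b)$. Two points to tighten, though. First, you must state uniqueness for the integral equation, which you do not mention; this is immediate since the operator $g\mapsto\lambda\,\E_x\big[\int_0^{\tau_a^-\wedge\tau_b^+}e^{-(q+\lambda)s}g(X_s)1_{\{X_s\ge l\}}\,\diff s\big]$ has sup-norm at most $\lambda/(q+\lambda)<1$ on bounded functions, but it is what turns ``the ansatz satisfies the equation'' into ``the ansatz equals $g$''. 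Second, your remark that the indicator $1_{\{X_s\ge l\}}$ is ``inactive'' on $(a,l]$ conflates the starting point $x$ with the integration variable $X_s$: the resolvent term integrates $g$ only over $[l,b)$ regardless of where $x$ sits, and it is precisely because $g$ on $[l,b)$ reduces (under the ansatz, via \eqref{olW0}) to a multiple of $W^{(q)}(b-\cdot)$ that the integral collapses to the convolution handled by Lemma~\ref{aux_SP}.
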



By taking the limit as $b \uparrow \infty$ in \eqref{iden_h_SP}, we obtain the desired expression for $v_c(x;a,l)$ defined in \eqref{vf_0PC}. The proof of the following is deferred to Appendix  \ref{proof_limit_cont_SP}. 


\begin{proposition}\label{limit_cont_SP}
For $l\geq a$ and any locally bounded measurable function $f_c$ on $\R$, the function $v_c(x;a,l)$ from \eqref{vf_0PC} is given by 
\begin{align} \label{vf_1C}
v_c(x;a,l)
= \begin{cases}
f_c(a) \, \dfrac{Z^{(q+\lambda)}(l-x;\Phi(q))}{Z^{(q+\lambda)}(l-a;\Phi(q))} 
\,, & \text{for } x > a \,, \\
f_c(x) \,, & \text{for } x \leq a\,. 
\end{cases}
\end{align}
\end{proposition}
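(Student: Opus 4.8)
\textbf{Proof plan for Proposition \ref{limit_cont_SP}.}

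The plan is to take the limit $b \uparrow \infty$ in the identity \eqref{iden_h_SP} of Lemma \ref{Prop_cost_cont_player}, after setting $l = l$ fixed and letting $b \to \infty$ in the role of the upper barrier. First I would observe that, since $X$ is spectrally positive and not a subordinator, $\limsup_{t\to\infty} X_t = +\infty$ a.s., so $\tau_b^+ \uparrow \infty$ a.s.\ as $b \uparrow \infty$; hence $1_{\{\tau_a^- < T_l^- \wedge \tau_b^+\}} \uparrow 1_{\{\tau_a^- < T_l^-\}}$ monotonically. By monotone/dominated convergence (the integrand is bounded by $|f_c(a)|$ on $\{x>a\}$, which is integrable since $q>0$ makes the discount factor $\leq 1$ and the event has probability $\leq 1$), the left-hand side of \eqref{iden_h_SP} converges to $v_c(x;a,l)$ as defined in \eqref{vf_0PC}. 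For $x \leq a$ the identity is already exactly $f_c(x)$ with no $b$-dependence, so that case is immediate.

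The substantive step is therefore to compute $\lim_{b\to\infty} \mathscr{W}^{(q,\lambda)}_{b-l}(l-x) / \mathscr{W}^{(q,\lambda)}_{b-l}(l-a)$ and show it equals $Z^{(q+\lambda)}(l-x;\Phi(q)) / Z^{(q+\lambda)}(l-a;\Phi(q))$. Writing $c := b-l \to \infty$ and fixing $y := l - x \geq 0$ (resp.\ $y := l-a \geq 0$), I need the asymptotics of $\mathscr{W}^{(q,\lambda)}_{c}(y) = W^{(q)}(y+c) + \lambda \int_0^y W^{(q+\lambda)}(y-u)\, W^{(q)}(u+c)\,\du$ as $c\to\infty$. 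The key classical fact is the exponential asymptotic $W^{(q)}(z) \sim e^{\Phi(q) z} / \psi'(\Phi(q))$ as $z\to\infty$ (valid since $\psi'(\Phi(q)) \in (0,\infty]$ for a non-subordinator; one handles the $\psi'(\Phi(q))=\infty$ case by a routine truncation argument or cites the standard result, e.g.\ from \cite{K}). Dividing numerator and denominator by $e^{\Phi(q) c}/\psi'(\Phi(q))$, I get
\begin{align*}
\frac{\mathscr{W}^{(q,\lambda)}_c(y)}{e^{\Phi(q)c}/\psi'(\Phi(q))}
\;\longrightarrow\; e^{\Phi(q) y} + \lambda \int_0^y W^{(q+\lambda)}(y-u)\, e^{\Phi(q) u}\,\du,
\end{align*}
using dominated convergence inside the integral (the ratio $W^{(q)}(u+c)/(e^{\Phi(q)c}/\psi'(\Phi(q)))$ is bounded uniformly in $u\in[0,y]$ and $c$ large, and converges pointwise to $e^{\Phi(q)u}$). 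A change of variable $u \mapsto y-u$ in the integral rewrites the limit as $e^{\Phi(q) y}\bigl(1 + \lambda \int_0^y e^{-\Phi(q) u} W^{(q+\lambda)}(u)\,\du\bigr)$, which is precisely $Z^{(q+\lambda)}(y;\Phi(q))$ by the definition \eqref{ZqlPq}. Taking the ratio for $y = l-x$ over $y = l-a$, the normalising factor $e^{\Phi(q)c}/\psi'(\Phi(q))$ cancels, yielding \eqref{vf_1C}.

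The main obstacle is making the interchange of limit and integral fully rigorous when $\psi'(\Phi(q))$ may be infinite (e.g.\ certain bounded-variation or heavy-tailed cases): in that degenerate situation $W^{(q)}(z) e^{-\Phi(q)z} \to 0$, and one must instead argue directly that $\mathscr{W}^{(q,\lambda)}_c(y) e^{-\Phi(q)c} \to Z^{(q+\lambda)}(y;\Phi(q))$ via a more careful estimate — but since only the \emph{ratio} of two such quantities with the same $c$ appears, an alternative and cleaner route is to fix a reference point (say $y_0 = l-a$) and study $\mathscr{W}^{(q,\lambda)}_c(y)/\mathscr{W}^{(q,\lambda)}_c(y_0)$ directly, showing it is eventually monotone/bounded in $c$ and identifying the limit through the Laplace-transform characterisation of $W^{(q)}$ and $Z^{(q+\lambda)}(\cdot;\Phi(q))$, or by invoking the change-of-measure that turns $X$ into a process drifting to $+\infty$ with a nondegenerate mean. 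Either way the computation is routine given the excerpt's fluctuation identities; the only care needed is uniform integrability of the normalised scale function on the compact interval of integration.
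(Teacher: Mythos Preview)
Your approach matches the paper's: take $b\to\infty$ in \eqref{iden_h_SP} and identify the limiting ratio as $Z^{(q+\lambda)}(l-x;\Phi(q))/Z^{(q+\lambda)}(l-a;\Phi(q))$. The paper avoids your self-identified obstacle by normalising with $W^{(q)}(b-l)$ rather than $e^{\Phi(q)(b-l)}/\psi'(\Phi(q))$ and invoking the unconditional ratio limit $W^{(q)}(y+c)/W^{(q)}(c)\to e^{\Phi(q)y}$ (Exercise~8.5(i) of \cite{K}); in fact $\psi'(\Phi(q))<\infty$ always holds here since $\Phi(q)>0$ and $\psi$ is smooth on $(0,\infty)$, so your worry is moot, but the paper's normalisation is the cleaner route.
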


	By \eqref{ZqlPq} together with Proposition \ref{limit_cont_SP}, for $x\geq l$, we particularly have  
\begin{align*} 
v_c(x;a,l) &= f_c(a) \, \frac{e^{\Phi(q)(l-x)}}{Z^{(q+\lambda)}(l-a;\Phi(q))} 
= e^{\Phi(q)(l-x)} \, v_c(l;a,l) \,,
\end{align*}
where 
\begin{align} 
 \label{vCl}
v_c(l;a,l) &=\frac{ f_c(a)}{Z^{(q+\lambda)}(l-a;\Phi(q))} \,.
\end{align}

Consider now $f_p$ to be any locally bounded measurable function on $\R$.
Then, we also define
 \begin{align} \label{Gamma}
 \Gamma(x;l) &:= \int_0^{l-x} f_p(l-u) W^{(q+\lambda)}(l-x-u) \du = \int_0^{l-x} f_p(u+x) W^{(q+\lambda)}(u) \du, \quad \forall\; x \leq l.
  \end{align}

The proof of the following result is given in Appendix \ref{proof_prop_cost_per_player}. 
\begin{lemma}\label{Prop_cost_per_player}
For $b\geq l\geq a$ and any locally bounded measurable function $f_p$ on $\R$, we have  
\begin{align}\label{iden_g_SP}
\E_x\left[e^{-q T_l^-} f_p(X_{T_l^-}) 1_{\{T_l^-< \tau_{a}^- \wedge \tau_b^+\}}\right] 
 = \left\{  \begin{array}{ll} \lambda  \Big( \frac{\mathscr{W}^{(q, \lambda)}_{b-l}(l-x)}{\mathscr{W}^{(q, \lambda)}_{b-l}(l-a)}  \Gamma(a;l) 
  - \Gamma(x;l) \Big), 
 & \text{for } x > a, \\ 0, & \text{for } x \leq a.  \end{array} \right.
\end{align}
\end{lemma}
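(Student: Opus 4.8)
The plan is to condition on the behaviour of $X$ up to the first Poisson observation time and reduce the quantity on the left-hand side of \eqref{iden_g_SP} to an application of Lemma \ref{Prop_cost_cont_player} together with the resolvent identity \eqref{resolvent_density_0}. Concretely, I would first record that $T_l^- < \tau_a^- \wedge \tau_b^+$ forces the starting point to satisfy $x > a$ (the case $x \leq a$ being trivial since then $\tau_a^- = 0 < T_l^-$ a.s., giving the value $0$), so from now on assume $x \in (a,b)$. The key structural observation is that player $P$ stops at $T_l^-$, the first Poisson time at which $X$ lies strictly below $l$; decomposing over whether the very first Poisson arrival $T^{(1)}$ happens before or after $\tau_a^- \wedge \tau_b^+$, and using the memoryless property of $N$ (so that after $T^{(1)}$, if $X_{T^{(1)}} \in (a,b)$, the game restarts), yields a renewal-type equation. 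The cleaner route, however, is to use the standard representation of the Poisson-observation stopping functional as an integral: for any locally bounded $f_p$,
\begin{align*}
\E_x\left[e^{-q T_l^-} f_p(X_{T_l^-}) 1_{\{T_l^-< \tau_{a}^- \wedge \tau_b^+\}}\right]
= \lambda\, \E_x\left[ \int_0^{\tau_a^- \wedge \tau_b^+} e^{-q s}\, f_p(X_s)\, 1_{\{X_s < l\}}\, \diff s \right] + (\text{correction terms}),
\end{align*}
where the correction accounts for the possibility that $X_s < l$ while the process may later exit through $\tau_a^-$ or $\tau_b^+$ rather than being observed. I would make this precise by writing $1_{\{X_s<l\}} = 1 - 1_{\{X_s \geq l\}}$ is not quite right; instead the correct identity is that the integrand should be weighted by the probability (given the path) of being observed \emph{at the next} Poisson arrival while still below $l$, which is exactly captured by iterating.

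The more robust approach I would actually pursue: apply Lemma \ref{Prop_cost_cont_player} with the specific reward $f_c \equiv 1$ on a shifted level, or better, use the known resolvent-type decomposition that appears in the Poisson-observation literature (e.g.\ \cite{PY_American, PPSY2020}). Writing $h(x) := \E_x[e^{-q T_l^-} f_p(X_{T_l^-}) 1_{\{T_l^-< \tau_{a}^- \wedge \tau_b^+\}}]$, a first-step analysis on the first Poisson arrival (which occurs at an independent $\mathbf{e}_\lambda$ time) gives, for $x \in (a,b)$,
\begin{align*}
h(x) = \E_x\left[ e^{-(q+\lambda)\, (\mathbf{e}/\lambda)} f_p(X_{\mathbf{e}/\lambda}) 1_{\{X_{\mathbf{e}/\lambda} < l,\ \mathbf{e}/\lambda < \tau_a^- \wedge \tau_b^+\}} \right] + \E_x\left[ e^{-(q+\lambda)\,(\mathbf{e}/\lambda)} h(X_{\mathbf{e}/\lambda}) 1_{\{X_{\mathbf{e}/\lambda} \geq l,\ \mathbf{e}/\lambda < \tau_a^- \wedge \tau_b^+\}} \right],
\end{align*}
where $\mathbf{e}$ is a standard exponential independent of everything; equivalently, both terms become $(q+\lambda)$-resolvents over $[0, \tau_a^- \wedge \tau_b^+)$ of $\lambda f_p 1_{(-\infty,l)}$ and $\lambda h 1_{[l,\infty)}$ respectively. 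Since the process has no downward jumps, $X$ enters $[l,\infty)$ from $(a,l)$ only continuously through level $l$ — no, wait: $X$ is spectrally \emph{positive}, so it enters $[l,\infty)$ only by creeping upward through $l$ or jumping over it; it exits $(a,b)$ downward only continuously through $a$. This spectral one-sidedness is what makes $h$ solvable in closed form: I would guess $h$ of the form $h(x) = \lambda\bigl(c(l)\,\mathscr{W}^{(q,\lambda)}_{b-l}(l-x) - \Gamma(x;l)\bigr)$ on $(a,b)$ — motivated by the shape of \eqref{iden_h_SP} and by the fact that $\Gamma(\cdot;l)$ from \eqref{Gamma} is precisely the $(q+\lambda)$-resolvent-type kernel of $f_p$ killed at $l$ — and then verify it satisfies the resolvent fixed-point equation and the boundary conditions $h(a+) = 0$ (since $\tau_a^- = 0$ limiting value forces no contribution) and the appropriate behaviour at $b$ (via $\tau_b^+$); the constant $c(l)$ is then pinned down by the normalisation $\Gamma(a;l)$-term as in the statement. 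Matching against Lemma \ref{Prop_cost_cont_player}, which already solved the analogous problem with the $f_c$-reward collected at $\tau_a^-$, lets me borrow the factor $\mathscr{W}^{(q,\lambda)}_{b-l}(l-x) / \mathscr{W}^{(q,\lambda)}_{b-l}(l-a)$ directly rather than re-deriving it.

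The step I expect to be the main obstacle is the rigorous justification of the resolvent/first-step equation above with all the indicator constraints $\{X_{\mathbf{e}/\lambda} < l\}$, $\{X_{\mathbf{e}/\lambda} \geq l\}$, and $\{\mathbf{e}/\lambda < \tau_a^- \wedge \tau_b^+\}$ simultaneously in force, and then solving it in closed form. The subtlety is twofold: (i) one must correctly handle the event where the first Poisson arrival lands the process in $[l,\infty) \setminus (a,b)$ versus $(a,b) \cap [l,\infty)$ — but since $l \leq b$, landing above $b$ is impossible before $\tau_b^+$, so this is automatically consistent; and (ii) identifying that $\int_0^{l-x} f_p(u+x) W^{(q+\lambda)}(u)\,\du = \Gamma(x;l)$ is exactly the ``free'' (killed only at the lower/upper levels after translating) resolvent kernel requires invoking \eqref{resolvent_density_0} with the roles of $a, b$ suitably chosen and then taking the relevant limit, paralleling what must already be done in Appendix \ref{proof_prop_cost_cont_player} for Lemma \ref{Prop_cost_cont_player}. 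Once the ansatz is in hand, checking it is a routine — if lengthy — computation using \eqref{olW}, \eqref{olW0}, the strong Markov property at $\tau_a^-$ and $\tau_b^+$, and the resolvent identity; the $x \leq a$ case is immediate. I would structure the appendix proof by (1) disposing of $x \leq a$, (2) deriving the first-step equation, (3) reducing the two resolvent terms to expressions in $\mathscr{W}^{(q,\lambda)}_{b-l}$ and $\Gamma$ via Lemma \ref{Prop_cost_cont_player} and \eqref{resolvent_density_0}, and (4) collecting terms to read off \eqref{iden_g_SP}.
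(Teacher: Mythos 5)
Your plan is in substance the one the paper carries out in Appendix \ref{proof_prop_cost_per_player}: a first-step analysis over the first Poisson arrival, converting the two pieces into $(q+\lambda)$-resolvents of $\lambda f_p 1_{(-\infty,l)}$ and $\lambda g\, 1_{[l,\infty)}$ on $(a,b)$, then resolving them via \eqref{resolvent_density_0}, Lemma \ref{flW-} and $\mathscr{W}^{(q,\lambda)}_{b-l}$. The paper's organisation is slightly tighter than your ansatz-and-verify route: rather than guessing the global form of $g$ and checking a fixed-point equation, it first expresses $g(x)$ for all $x$ in terms of the single scalar unknown $g(l)$ --- via the strong Markov property at $\tau_l^-$ for $x>l$, where downward creeping of the spectrally positive process gives $g(x)=g(l)\,W^{(q)}(b-x)/W^{(q)}(b-l)$, and at $T^{(1)}\wedge\tau_l^+\wedge\tau_a^-$ for $x\in[a,l)$ --- and only then performs the first-step decomposition at $x=l$, yielding one scalar equation for $g(l)$, which is solved and substituted back. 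Your ansatz route would also close (your observation that $\mathscr{W}^{(q,\lambda)}_{b-l}(l-x)=W^{(q)}(b-x)$ and $\Gamma(x;l)=0$ for $x\geq l$ shows the ansatz is consistent across both regimes), but verifying it requires precisely the same resolvent computation as the paper's Step 2, so there is no real shortcut. One slip to fix: in your displayed first-step equation the discount factor should be $e^{-q T^{(1)}}$ with $T^{(1)}\sim\textrm{Exp}(\lambda)$, not $e^{-(q+\lambda)(\mathbf e/\lambda)}$; the extra $\lambda$ in the exponent would double-count the exponential weight. You describe the correct $(q+\lambda)$-resolvent characterisation in the following sentence, so this reads as a typo, but as written the displayed equation does not hold.
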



By taking the limit as $b \uparrow \infty$ in \eqref{iden_g_SP}, we obtain the desired expression for $v_p(x;a,l)$ 
defined in \eqref{vf_0PC}.  
The proof of the following is deferred to Appendix  \ref{proof_limit_math_W}.
\begin{proposition}\label{limit_math_W}
For $l \geq a$ and any locally bounded measurable function $f_p$ on $\R$, the function $v_p(x;a,l)$ from \eqref{vf_0PC} is given by 
\begin{align} \label{vf_1P}
\begin{split}
v_p(x;a,l) 
 &= \left\{  \begin{array}{ll} \lambda \Big( \dfrac{Z^{(q+\lambda)}(l-x;\Phi(q))}{Z^{(q+\lambda)}(l-a;\Phi(q))} 
 \Gamma(a;  l) - \Gamma(x; l) \Big), & \text{for } x > a, \\ 0, & \text{for } x \leq a.  \end{array} \right.
\end{split}
\end{align}

\end{proposition}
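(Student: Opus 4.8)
The plan is to pass to the limit $b \uparrow \infty$ in the identity \eqref{iden_g_SP} of Lemma \ref{Prop_cost_per_player}, exactly as was done for Proposition \ref{limit_cont_SP}. On the event under consideration the stopping time $\tau_b^+$ increases to $\infty$ $\p_x$-a.s.\ as $b \uparrow \infty$, so by monotone/dominated convergence the left-hand side of \eqref{iden_g_SP} converges to $\E_x[e^{-qT_l^-} f_p(X_{T_l^-}) 1_{\{T_l^- < \tau_a^-\}}] = v_p(x;a,l)$ (for $x \le a$ the identity is trivial since then $\tau_a^- = 0$ a.s., giving $v_p(x;a,l) = 0$; for $x > a$ one uses that $f_p$ is locally bounded and that $X_{T_l^-} \in [?, l)$ together with $e^{-qT_l^-} \le 1$, so the integrand is bounded by a constant times $1$ in the bounded-variation case and controlled via the resolvent bound otherwise — but in fact only the right-hand side needs care). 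So the whole content is to compute $\lim_{b\uparrow\infty}$ of the right-hand side of \eqref{iden_g_SP} for $x > a$.

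For that, I would first handle the term $\Gamma(x;l)$, which does not depend on $b$ at all, so it passes through the limit unchanged. The remaining work is the limit of the prefactor $\mathscr{W}^{(q,\lambda)}_{b-l}(l-x) / \mathscr{W}^{(q,\lambda)}_{b-l}(l-a)$. This is precisely the quantity that already appears in Proposition \ref{limit_cont_SP}: comparing \eqref{iden_h_SP} with \eqref{vf_1C}, passing to the limit there shows
\[
\lim_{b\uparrow\infty} \frac{\mathscr{W}^{(q,\lambda)}_{b-l}(l-x)}{\mathscr{W}^{(q,\lambda)}_{b-l}(l-a)} = \frac{Z^{(q+\lambda)}(l-x;\Phi(q))}{Z^{(q+\lambda)}(l-a;\Phi(q))},
\]
for every fixed $x$ with $a \le x \le l$ (and, via the $v_c$ formula for $x \ge l$, this extends using $Z^{(q+\lambda)}(l-x;\Phi(q)) = e^{\Phi(q)(l-x)}$ there too). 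Since this identity has already been established in the course of proving Proposition \ref{limit_cont_SP} (it is stated in Appendix \ref{proof_limit_cont_SP}), I would simply invoke it. Multiplying the limiting prefactor by the constant $\Gamma(a;l)$ and subtracting $\Gamma(x;l)$, then multiplying by $\lambda$, gives exactly \eqref{vf_1P}.

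The one genuine point requiring justification is that the interchange of limit and expectation on the left-hand side is legitimate — i.e.\ that $\lim_{b\uparrow\infty} \E_x[e^{-qT_l^-} f_p(X_{T_l^-}) 1_{\{T_l^- < \tau_a^- \wedge \tau_b^+\}}] = \E_x[e^{-qT_l^-} f_p(X_{T_l^-}) 1_{\{T_l^- < \tau_a^-\}}]$. Here I would argue that on $\{T_l^- < \tau_a^-\}$ the value $X_{T_l^-}$ lies in the bounded set $[?,l)$ — more precisely, since $X$ is spectrally positive and has not crossed below $a$, and $X$ jumps only upwards, one has to be slightly careful, but in any case $f_p$ restricted to the relevant range is bounded because, prior to $\tau_a^-$, between consecutive Poisson times $X$ can only increase by upward jumps, so $X_{T_l^-}$ is $\p_x$-a.s.\ finite, and one can dominate by $\sup_{y \in K}|f_p(y)|$ on an exhausting sequence of compacts $K$ combined with a Borel--Cantelli / integrability argument, or more simply by splitting according to the (first, second, \dots) Poisson observation and using dominated convergence termwise with the resolvent-type bounds from Section~\ref{sec:SP}. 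The indicator $1_{\{T_l^- < \tau_a^- \wedge \tau_b^+\}}$ increases to $1_{\{T_l^- < \tau_a^-\}}$ monotonically as $b \uparrow \infty$ (on $\{\tau_b^+ \uparrow \infty\}$, which holds a.s.\ when $X$ drifts to $-\infty$ or oscillates; when $X \to +\infty$ one still has $\tau_b^+ < \infty$ but the event $\{T_l^- < \tau_a^-\}$ may have probability $<1$, and the same monotone-convergence argument applies to the integrand times the indicator). Thus the main — and essentially only — obstacle is this dominated/monotone convergence bookkeeping; the algebra is immediate once the prefactor limit from Proposition \ref{limit_cont_SP} is borrowed. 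I would therefore present the proof as: (1) reduce to $x > a$; (2) quote the prefactor limit; (3) note $\Gamma(x;l)$ is $b$-independent; (4) justify the interchange of limit and expectation via monotone convergence of the indicators plus local boundedness of $f_p$; (5) conclude \eqref{vf_1P}.
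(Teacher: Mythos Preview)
Your approach is exactly the paper's: pass to the limit $b \uparrow \infty$ in Lemma~\ref{Prop_cost_per_player} and use the limit \eqref{limit_Wb} from Appendix~\ref{proof_limit_cont_SP} to identify $\mathscr{W}^{(q,\lambda)}_{b-l}(l-x)/\mathscr{W}^{(q,\lambda)}_{b-l}(l-a) \to Z^{(q+\lambda)}(l-x;\Phi(q))/Z^{(q+\lambda)}(l-a;\Phi(q))$. The paper's own proof is literally a one-line reference to these two ingredients.

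Your step~(4), however, is needlessly convoluted, and the ``$[?,l)$'' betrays a missed observation that makes everything clean. On $\{T_l^- < \tau_a^-\}$ the process has not yet entered $(-\infty,a)$ by time $T_l^-$; since $X$ is spectrally \emph{positive} it has no downward jumps, so $X_s \geq a$ for all $s < \tau_a^-$, giving $X_{T_l^-} \in [a,l)$ a.s.\ on that event. Local boundedness of $f_p$ then yields the uniform dominant $\sup_{y \in [a,l]}|f_p(y)| < \infty$, and since $\tau_b^+ \uparrow \infty$ a.s.\ the indicators $1_{\{T_l^- < \tau_a^- \wedge \tau_b^+\}}$ increase to $1_{\{T_l^- < \tau_a^-\}}$; dominated convergence is immediate. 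There is no need for exhausting compacts, Borel--Cantelli, or termwise splitting over Poisson epochs.
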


	By Proposition \ref{limit_math_W}, for $x \geq l$, we particularly have  
\begin{align}\label{vf_simpleP}
v_p(x;a,l) &= \lambda \, \frac{e^{\Phi(q)(l-x)}}{Z^{(q+\lambda)}(l-a;\Phi(q))} 
 \Gamma(a;l)
= e^{\Phi(q)(l-x)} \, v_p(l;a,l),
\end{align}
where 
\begin{align} \label{vPl}
v_p(l;a,l) &= 
 \frac{\lambda}{Z^{(q+\lambda)}(l-a;\Phi(q))} 
 \Gamma(a;l).
\end{align}

\begin{remark}
One possible extension of our game is to consider the case when player $C$ pays a running cost for the additional exercise opportunities provided. 
In particular, if we consider $R:\R\mapsto [0,\infty)$ to be the instantaneous running cost, then the candidate value function $\tilde{v}_c(x;a,l)$ for player $C$ will be given by
\begin{align*}
\widetilde{v}_c(x;a,l)=\mathbb{E}_x\left[e^{-q\tau_a^-}f_c(X_{\tau_a^-})1_{\{\tau_a^-<T_l^-\}} - \int_0^{\tau_a^-\wedge T_l^-}e^{-qt}R(X_t)\diff t \right],
\end{align*}
for all $x \geq a$.
Then, using identity (3.13) in Theorem 3.1 of \cite{LLWX}, we can obtain that 
\begin{align*}
\widetilde{v}_c(x;a,l)
= \,&f_c(a)\frac{Z^{(q+\lambda)}(l-x,\Phi(q))}{Z^{(q+\lambda)}(l-a,\Phi(q))} 
-\int_a^{l}R(u)\left\{\frac{Z^{(q+\lambda)}(l-x,\Phi(q))}{Z^{(q+\lambda)}(l-a,\Phi(q))}W^{(q+\lambda)}(u-a)-W^{(q+\lambda)}(u-x)\right\}\diff u\notag\\
&-\int_l^{\infty}R(u)\left\{\frac{Z^{(q+\lambda)}(l-x,\Phi(q))}{Z^{(q+\lambda)}(l-a,\Phi(q))}\mathcal{W}_{u-l}^{(q,\lambda)}(l-a)-\mathcal{W}^{(q,\lambda)}_{u-l}(l-x)\right\}\diff u, \qquad x\geq a,
\end{align*}
and $\widetilde{v}_c(x;a,l)=f_c(x)$ otherwise. 
This can be used instead of $v_c$ in the subsequent analysis.
\end{remark}

\section{Optimality over threshold strategies}  \label{section_threshold_strategies}

In this section, we consider a version of the game where admissible strategies are restricted to be of threshold-type (this will be strengthened in Section \ref{section_optimality_general}). 
Hence, the value functions of the two players take the form of \eqref{vf_0PC} and the objective \eqref{Nasheq} becomes to find a Nash equilibrium $(a^*, l^*) \in \R^2$ satisfying simultaneously the following two equations:
\begin{align} \label{nash_threshold}
\begin{split}
v_c(x;a^*,l^*)  &= \max_{a \in \R} v_c(x;a,l^*), \\
v_p(x;a^*,l^*)  &= \max_{l \in \R} v_p(x;a^*,l). 
\end{split}
\end{align}
Although the barriers $(a, l)$ are allowed to depend on $x$, the values of $(a^*, l^*)$ that we will obtain are invariant of $x$.



For the rest of the paper, we make the following assumption on the reward functions.

\begin{assump} \label{Ass}
The reward functions $f_c(\cdot)$ and $f_p(\cdot)$ satisfy the following properties:  
\begin{enumerate}
\item[(i)]  We have $f_c(\cdot) \leq f_p(\cdot)$ 
on $\R$. 
\item[(ii)] For $i \in\{c,p\}$, the function $f_i(\cdot)$ is strictly decreasing, continuously differentiable and concave on $\R$ and admits 
a constant 
 \begin{align} \label{about_x_upper}
\ol x_i \in \R \qquad \textrm{such that } \qquad f_i(x)  \left\{ \begin{array}{ll} > 0, & x < \ol x_i,  \\ 
< 0, & x > \ol x_i. \end{array} \right.
 \end{align}
%
\end{enumerate}
\end{assump}



It is worth noting that Assumption \ref{Ass}.(i) reflects either a symmetric reward structure of $f_c \equiv f_p$ (still with asymmetric opportunities), or an asymmetric reward structure of $f_c < f_p$. 
In the main application of firm acquisitions (see the discussion in our motivation section in the Introduction), the reward $f_c$ for the already diversified firm $C$ -- representing the benefits obtained upon acquisition minus the market value of the target firm, which is the cost of acquisition -- is lower that the reward $f_p$ of firm $P$ due to the synergy effect (see, e.g.\ \cite{Zhou} for more details).
Assumption \ref{Ass}.(ii) is a natural condition widely applicable. 
The decreasing reward functions reflect the game's ``optimal purchasing'' nature, given that when purchasing a real asset, the buyer increases their reward when the asset price to be paid decreases. 
For instance, as it is standard to model a firm's value as an exponential Lévy process, and given that this value represents the cost of acquisition, the reward functions $f_\cdot($X$)$ become clearly decreasing and concave in terms of the Lévy process $X$. 
If instead the firm's value is modelled by $X$ itself, then the reward functions $f_\cdot($X$)$ become decreasing and linear, which also fits Assumption \ref{Ass}.(ii) (see Section \ref{section_numerics} for a couple of case studies).

\begin{remark} \label{dual_case}
By considering the dual process $-X$, the results in this paper hold also for the case driven by a spectrally negative \lev process and strictly increasing, continuously differentiable and concave $f_i$, for $i\in\{c,p\}$, which can also expand even further the application span of this paper.
\end{remark}

In view of Assumption \ref{Ass}, we automatically get that
\begin{align}
\overline{x}_c \leq \overline{x}_p. 
\label{x_bar_inequality}
\end{align}
The values $\ol x_c$ and $\ol x_p$ 
will act as upper bounds for the optimal barriers for players $C$ and $P$; this is intuitive because it is obviously suboptimal to stop when the reward is negative. 

\subsection{Benchmark case: Single-player setting}
\label{lambda=0}

We begin our analysis with the consideration of the special case when $\lambda=0$, i.e. player $P$ can never stop, as a benchmark. 
This involves only player $C$ whose expected reward under a threshold strategy $\tau_a^-$ is derived by \eqref{upcrossing_identity} and is given by 
(cf. Remark \ref{remark_when_a_geq_l}) 
\begin{align*} 
v_c^o (x;a) := \E_x \Big[  e^{-q \tau_{a}^-}f_c(X_{\tau_{a}^-}) 1_{\{ \tau_{a}^- < \infty \}}\Big] = \left\{ \begin{array}{ll} f_c(x) &\textrm{for }  x \leq a, \\  e^{\Phi(q) (a-x)} f_c(a) &\textrm{for } x > a. \end{array} \right.
\end{align*}
Straightforward differentiation gives
\begin{align} \label{v_o_derivative}
\frac \partial {\partial a}v_c^o(x;a)  = \left\{ \begin{array}{ll} 0 &\textrm{for }  x < a, \\  e^{\Phi(q) (a-x)}h_c^o(a) &\textrm{for } x > a, \end{array} \right.
\end{align}
where we define
\begin{align}
h_c^o(x) 
:= \Phi(q) f_c(x) + f_c'(x), \quad x \in \R. \label{h_c_o}
\end{align}
By Assumption \ref{Ass}, the function $h_c^o(\cdot)$ is continuous, monotonically decreasing and in particular satisfies $h_c^o(\ol x_c) = f'_c(\ol x_c) < 0$. Hence, there exists 
\begin{align} \label{a_underbar}
\underline{a} \in [-\infty, \ol x_c) 
\qquad \textrm{such that for $x\in\R$,} \qquad  h_c^o(x) \left\{ \begin{array}{ll} 
> 0, & x <  \underline{a},  \\  
< 0, & x >  \underline{a}. \end{array} \right.
 \end{align}
\noindent 
Using Theorem 2.2 of \cite{LZ}, we can conclude that $\tau_{\underline{a}}^-$ is in fact the maximiser over all stopping times when $\underline{a}> -\infty$.
Indeed, in light of Remark \ref{dual_case}, the assumptions imposed in \cite{LZ} (where they consider the spectrally negative case) are satisfied by the properties of $h_c^o$ defined in \eqref{h_c_o}. Hence, we have 
\begin{align} 
v_c^o (x;\underline{a}) = \max_{a\in \R} v_c^o (x;a) = \sup_{\tau \in\mathcal{T}_c }  \E_x \Big[  e^{-q \tau}f_c(X_{\tau}) 1_{\{\tau < \infty \}} \Big], \quad x \in \R. \label{v_inf_max}
\end{align}
Instead, when $\underline{a} = -\infty$, an optimal stopping time does not exist.  

Identity \eqref{v_inf_max} also provides the solution to the degenerate case discussed in Remark \ref{remark_when_a_geq_l}.

\subsection{
Preliminary results}

In view of Section \ref{lambda=0}, we make the following standing assumption in the rest of the paper, which essentially rules out the case when player $C$ should optimally never stop (when there is no opponent), as this is not interesting in terms of applications.
\begin{assump} \label{assump_a_underbar_finite}
We assume that $\underline{a}$ defined in \eqref{a_underbar} satisfies $\underline{a} > -\infty$.
\end{assump}


Analogous to $h_c^o(\cdot)$ as in  \eqref{h_c_o}, we define 
the following continuous functions, for all $x \in \mathbb{R}$, 
\begin{align} \label{def_h_p_c}
\begin{split}
h_c(x) &:= (\Phi(q)+\lambda W^{(q+\lambda)}(0)) \, f_c(x) + f_c'(x) 
= h_c^o(x) + \lambda W^{(q+\lambda)}(0) \, f_c(x), \\ 
h_p(x) &:= \Phi(q) f_p(x) + f'_p(x).
\end{split}
\end{align} 
For $i \in\{c,p\}$, 
thanks to Assumption \ref{Ass} and $h_i(\ol x_i) = f'_i(\ol x_i) < 0$, there exist 
 \begin{align} \label{x_p_and_h_p}
\underline{x}_i \in [-\infty, \overline{x}_i) \qquad \textrm{such that for $x\in\R$,} \qquad  h_i(x) \left\{ \begin{array}{ll} 
> 0, & x <  \underline{x}_i,  \\  
< 0, & x >  \underline{x}_i. \end{array} \right.
 \end{align}



\begin{remark} \label{remark_h_c_o}
It is straightforward to see from \eqref{def_h_p_c} that the function $h_c(\cdot)$ coincides with $h_c^o(\cdot)$ and $\underline{x}_c = \underline{a}$ if and only if $W^{(q+\lambda)}(0) = 0$ or if and only if $X$ is of unbounded variation (see Remark \ref{remark_scale_function_properties}.(ii)).
\end{remark}

While $\underline{x}_p$ may be equal to $-\infty$,  Assumption \ref{assump_a_underbar_finite} guarantees the finiteness of $\underline{x}_c$ as shown in the following result. This will be important in showing the existence of a Nash equilibrium, since $\underline{x}_c$ will act as a lower bound for $(a^*, l^*)$ (see Lemma \ref{lem:exist_a} below). The proof is given in Appendix \ref{proof_lemma_x_c_finite}.

\begin{lemma} \label{lemma_x_c_finite}
Recall $\underline{a}$ and $\underline{x}_c$ defined in \eqref{a_underbar} and \eqref{x_p_and_h_p}, respectively.  We have 
$-\infty < \underline{a} \leq \underline{x}_c$.
\end{lemma}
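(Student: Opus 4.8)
\textbf{Proof proposal for Lemma \ref{lemma_x_c_finite}.}

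The plan is to exploit the explicit relationship $h_c(x) = h_c^o(x) + \lambda W^{(q+\lambda)}(0)\, f_c(x)$ recorded in \eqref{def_h_p_c}, together with the signs of the two summands supplied by Assumptions \ref{Ass} and \ref{assump_a_underbar_finite}. Since $\lambda W^{(q+\lambda)}(0) \geq 0$ always (it is $0$ in the unbounded variation case and $\lambda/\mu > 0$ in the bounded variation case, by Remark \ref{remark_scale_function_properties}.(ii)), one direction is immediate: if $W^{(q+\lambda)}(0) = 0$ the two functions and hence the two thresholds coincide, giving $\underline{a} = \underline{x}_c$, and in particular Assumption \ref{assump_a_underbar_finite} yields finiteness of $\underline{x}_c$ directly; see Remark \ref{remark_h_c_o}. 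So the substantive case is $W^{(q+\lambda)}(0) > 0$, i.e.\ $X$ of bounded variation.

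First I would establish $\underline{a} \leq \underline{x}_c$. Take any $x < \underline{a}$. Then $h_c^o(x) > 0$ by \eqref{a_underbar}, and moreover $x < \underline{a} < \ol x_c$ so $f_c(x) > 0$ by Assumption \ref{Ass}.(ii) and \eqref{about_x_upper}; hence $h_c(x) = h_c^o(x) + \lambda W^{(q+\lambda)}(0) f_c(x) > 0$. Since $h_c$ is continuous and, by the structure in \eqref{x_p_and_h_p}, changes sign exactly once at $\underline{x}_c$ (being positive to the left and negative to the right), the fact that $h_c > 0$ on $(-\infty, \underline{a})$ forces $\underline{x}_c \geq \underline{a}$. (One should note here that $h_c$ is indeed monotone in the relevant sense: $\Phi(q) + \lambda W^{(q+\lambda)}(0) > 0$, $f_c$ is positive-then-negative and decreasing, $f_c'$ is nonincreasing by concavity, so $h_c$ is a sum whose sign-change-once property is exactly what \eqref{x_p_and_h_p} asserts.) Combined with $\underline{a} > -\infty$ from Assumption \ref{assump_a_underbar_finite}, this gives $-\infty < \underline{a} \leq \underline{x}_c$ and completes the proof.

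The one point requiring a little care — and the closest thing to an obstacle — is making sure the inequality $\underline{a} \leq \underline{x}_c$ is genuinely a consequence of the sign structure and not circular: the definitions \eqref{a_underbar} and \eqref{x_p_and_h_p} already build in that each threshold is the unique sign-change point of its respective function, so the argument reduces to the pointwise comparison ``$h_c^o(x) > 0$ and $f_c(x) > 0$ on $(-\infty,\underline a)$ imply $h_c(x) > 0$ there,'' which is what I used. The only place one must be slightly attentive is the endpoint/degenerate configurations — e.g.\ if $\underline a = \ol x_c$ were allowed (it is not, since \eqref{a_underbar} gives $\underline a < \ol x_c$ strictly), or if $f_c$ vanished somewhere left of $\underline a$ (it does not, again by \eqref{about_x_upper} since $\underline a < \ol x_c$) — but Assumption \ref{Ass} rules these out, so no extra work is needed. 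I do not expect any genuine difficulty; the lemma is essentially a bookkeeping consequence of the decomposition in \eqref{def_h_p_c} and the nonnegativity of $\lambda W^{(q+\lambda)}(0)$.
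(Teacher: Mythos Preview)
Your proof is correct and follows essentially the same route as the paper: both exploit the decomposition $h_c = h_c^o + \lambda W^{(q+\lambda)}(0) f_c$ from \eqref{def_h_p_c}, dispose of the unbounded-variation case via Remark \ref{remark_h_c_o}, and then use positivity of $f_c$ left of $\ol x_c$ together with nonnegativity of $h_c^o$ on $(-\infty,\underline a]$ to conclude $h_c>0$ there, forcing $\underline a \le \underline x_c$. The only cosmetic difference is that the paper evaluates directly at the single point $\underline a$ (where $h_c^o(\underline a)=0$, giving $h_c(\underline a)=\lambda W^{(q+\lambda)}(0) f_c(\underline a)>0$ and hence the strict inequality $\underline a < \underline x_c$ in the bounded-variation case), whereas you argue on the open interval $(-\infty,\underline a)$.
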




Given the above result in Lemma \ref{lemma_x_c_finite} and the observations in Remark \ref{remark_when_a_geq_l}, we aim in the following result at restricting our focus on a strict subset of 
$\R^2$ 
for the selection of $(a^*, l^*)$ leading to the maximisation of \eqref{nash_threshold}.
The proof is given in Appendix \ref{proof_remark_domain_to_focus}.


\begin{lemma}  \label{remark_domain_to_focus}
For $x \in \R$, the problem in \eqref{nash_threshold} satisfies:
\begin{enumerate}
\item[(i)] For any $l \in \R$, we have 
$\max_{a \in \R} v_c(x;a,l) = \max_{a \in (-\infty, x \wedge \overline{x}_c]} v_c(x;a,l)$;

\item[(i)'] If $l \geq \underline{a}$ in {\rm (i)}, then we have
$\max_{a \in \R} v_c(x;a,l) = \max_{a \in (-\infty, x \wedge l \wedge \overline{x}_c]} v_c(x;a,l)$;

\item[(ii)] For any $a \in \R$, we have
$\max_{l \in \R} v_p(x;a,l) = \max_{l \in [a, 
\overline{x}_p]} v_p(x;a,l)$.
\end{enumerate}
\end{lemma}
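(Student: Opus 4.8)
The plan is to prove each of the three claims by analysing the explicit formulae for $v_c$ and $v_p$ from Propositions \ref{limit_cont_SP} and \ref{limit_math_W}, using the sign structure of $f_c, f_p$ and the auxiliary functions $h_c^o, h_c, h_p$ introduced in \eqref{h_c_o} and \eqref{def_h_p_c}. For part (i), fix $l \in \R$ and study $a \mapsto v_c(x;a,l)$. If $a \geq x$ we are in the regime $v_c(x;a,l) = f_c(x)$, which is constant in $a$; so any maximiser can be taken with $a \leq x$. On the range $a < x$ (more precisely $a < x \wedge l$, using \eqref{vf_1C}), differentiation in $a$ gives, after using \eqref{Z_Phi_der}, a factor proportional to $h_c^o(a) + \lambda W^{(q+\lambda)}(0) f_c(a) = h_c(a)$ — or directly $h_c^o$ after taking the $b\uparrow\infty$ limit — together with a strictly positive factor coming from the scale-function ratio. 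Hence the sign of $\partial v_c/\partial a$ on $(-\infty, x\wedge l)$ agrees with that of a monotone function that is negative for $a > \overline{x}_c$ (since $h_c^o(\overline{x}_c) = f_c'(\overline x_c) < 0$ by Assumption \ref{Ass}, and $h_c^o$ decreasing); therefore it is never profitable to take $a > \overline x_c$, which is exactly the claimed reduction to $a \in (-\infty, x \wedge \overline x_c]$. The subtlety is handling the corner where the two branches of \eqref{vf_1C} meet (at $a = x$), and the case $l < x$ where the relevant branch for $a \in (l, x)$ is still the first line of \eqref{vf_1C} but with $l - x < 0$; one checks continuity and that the derivative identity persists, or alternatively invokes Remark \ref{remark_when_a_geq_l} to dispatch $a \geq l$.

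For part (i)$'$, under the extra hypothesis $l \geq \underline a$, I would argue that it is moreover suboptimal for player $C$ to pick $a > l$. When $a \geq l$ we are in the degenerate regime of Remark \ref{remark_when_a_geq_l}, so $v_c(x;a,l) = v_c^o(x;a)$, and by \eqref{v_o_derivative} the derivative of $v_c^o(x;\cdot)$ at $a$ has the sign of $h_c^o(a)$, which by \eqref{a_underbar} is $< 0$ for $a > \underline a$. Hence on $[l,\infty) \subseteq [\underline a, \infty)$ the function $a \mapsto v_c^o(x;a)$ is nonincreasing, so its maximum over $[l,\infty)$ is attained at $a = l$; combining with part (i) gives the reduction to $a \in (-\infty, x \wedge l \wedge \overline x_c]$. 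Care is needed only to glue the two regimes $a < l$ and $a \geq l$ continuously at $a = l$ (they agree by continuity of $v_c$ in $a$, or by direct comparison of the two formulae), and to note that if $x \leq l$ this is already subsumed in part (i).

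For part (ii), fix $a \in \R$ and study $l \mapsto v_p(x;a,l)$ using \eqref{vf_1P}. If $l \leq a$ then $v_p = 0$ (Remark \ref{remark_when_a_geq_l}), so any maximiser with positive value has $l \geq a$; if no $l$ gives a positive value the maximum is $0$, attained (e.g.) at $l = a$, still inside $[a, \overline x_p]$. For $l > a$ I would differentiate the first branch of \eqref{vf_1P} in $l$: using \eqref{Z_Phi_der}, the $\Gamma$-derivative $\partial_l \Gamma(x;l) = f_p(l) W^{(q+\lambda)}(l-x)$ from \eqref{Gamma}, and collecting terms, one obtains that $\partial v_p/\partial l$ equals a strictly positive scale-function factor times $h_p(l) = \Phi(q) f_p(l) + f_p'(l)$ (the cross terms in $\Phi(q)$, $\lambda$ and the $\Gamma$'s should cancel, exactly as in the single-player computation behind \eqref{v_o_derivative}). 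Since $h_p$ is continuous, decreasing, and $h_p(\overline x_p) = f_p'(\overline x_p) < 0$ by Assumption \ref{Ass}, it is negative for $l > \overline x_p$, so increasing $l$ beyond $\overline x_p$ strictly decreases $v_p$; hence the maximiser can be confined to $[a, \overline x_p]$ (note $a < \overline x_p$ is automatic whenever the set is nonempty, and otherwise the claim is vacuous with the convention that the max over the empty set is $0 = v_p(x;a,l)$ for $l\le a$).

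The main obstacle is the differentiation step in each part: verifying that the $l$- (resp.\ $a$-) derivative of the scale-function ratio in \eqref{vf_1C}/\eqref{vf_1P}, combined with the derivative of $\Gamma$ and the algebraic identity \eqref{Z_Phi_der} for $Z^{(q+\lambda)\prime}(\cdot;\Phi(q))$, collapses cleanly to a positive multiple of $h_c^o$ (resp.\ $h_p$), with all the $\Phi(q)$- and $\lambda$-dependent cross terms cancelling; this is a routine but slightly delicate computation, and one must also keep track of the non-differentiability of $W^{(q+\lambda)}$ at $0$ (i.e.\ the bounded-variation case, where $W^{(q+\lambda)}(0)=\mu^{-1}>0$) — which is precisely why $h_c$ rather than $h_c^o$ appears before passing to the $b\uparrow\infty$ limit, and why the statement is phrased with $\overline x_c, \overline x_p$ (the zeros of $f_c,f_p$) rather than with $\underline x_c, \underline x_p$. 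A clean alternative that sidesteps some of this is to prove the three monotonicity/boundedness facts directly from the probabilistic representations \eqref{vf_0PC}: e.g.\ for (i) and (ii), if $a > \overline x_c$ or $l > \overline x_p$ then on the event that the respective player stops, the integrand $e^{-q\tau_a^-}f_c(X_{\tau_a^-})$ (resp.\ $e^{-qT_l^-}f_p(X_{T_l^-})$) takes values $\le f_c(\overline x_c) = 0$ on a set of positive probability and is dominated by the corresponding quantity with threshold $\overline x_c$ (resp.\ $\overline x_p$) — but making ``dominated'' precise for the non-monotone periodic stopping $T_l^-$ is itself nontrivial, so I expect the analytic route via the explicit formulae to be the more efficient one.
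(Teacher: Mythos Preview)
Your approach for part (i)$'$ is essentially the paper's. But your proposed analytic route for parts (i) and (ii) contains errors in the derivative computations, and the paper in fact avoids differentiation altogether there.

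For part (i): the derivative $\partial v_c/\partial a$ is \emph{not} proportional to $h_c(a)$ (nor to $h_c^o(a)$). By Lemma~\ref{lemma_derivatives}(i) it equals a positive factor times $I(a;l)$ as in \eqref{Jal}, and $I(a;l)=h_c(a)$ only at $a=l$ (see \eqref{J_l_l}). Your conclusion happens to survive because $I(a;l)=f_c'(a)+\Phi(q)f_c(a)+\lambda W^{(q+\lambda)}(l-a)v_c(l;a,l)$ has all three terms negative when $a>\overline x_c$, but that is not the reason you gave. The paper bypasses this entirely: from \eqref{vf_1C}, $v_c(x;a,l)=f_c(a)\cdot(\text{positive})\le 0=v_c(x;\overline x_c,l)$ whenever $a>\overline x_c$ and $x>a$, since $f_c(a)<0$ by \eqref{about_x_upper}.

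For part (ii): the claim that $\partial v_p/\partial l$ equals a positive factor times $h_p(l)$ is false. The correct computation (Lemma~\ref{lemma_derivatives}(ii)) gives a positive factor times $\bigl(f_p(l)-v_p(l;a,l)\bigr)$, and the cross terms you expect to cancel do not. The quantity whose $l$-derivative is proportional to $h_p(l)$ is $J(l;a)$ in \eqref{Ila}, not $v_p$. To run your argument you would need $f_p(l)-v_p(l;a,l)<0$ for all $l>\overline x_p$, which is not immediate (the sign of $v_p(l;a,l)$ is ambiguous when $l>\overline x_p$) and would require the analysis of $J$ done later in Lemma~\ref{lem:exist_l}. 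The paper instead uses the probabilistic route you dismissed: since $T_l^-\le T_{\overline x_p}^-$ for $l>\overline x_p$, split on $\{T_l^-=T_{\overline x_p}^-\}$ versus $\{T_l^-<T_{\overline x_p}^-\}$; on the latter event $X_{T_l^-}\ge\overline x_p$ so $f_p(X_{T_l^-})\le 0$, while $f_p(X_{T_{\overline x_p}^-})\ge 0$ always, yielding $v_p(x;a,l)\le v_p(x;a,\overline x_p)$ in one line. This decomposition is precisely how to make ``dominated'' precise for the periodic stopping.
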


Given that a Nash equilibrium $(a^*, l^*) \in \R^2$ must satisfy both equations in \eqref{nash_threshold} simultaneously, the latter system can be written in light of Lemma \ref{remark_domain_to_focus}.(i)--(ii) in the form of
\begin{align} \label{nash_threshold_new}
\begin{split}
v_c(x;a^*,l^*)  &= \max_{a \in (-\infty, x \wedge l^* \wedge \overline{x}_c]} v_c(x;a,l^*), \\
v_p(x;a^*,l^*)  &= \max_{l \in [a^*, \overline{x}_p]} v_p(x;a^*,l). 
\end{split}
\end{align}
%
%


\subsection{First-order conditions.}
\label{1storder}

In this section we will characterise the candidate (optimal) thresholds $(a^*,l^*)$ by means of using the first-order conditions for the candidate value functions in \eqref{vf_0PC}, given by \eqref{vf_1C} and \eqref{vf_1P}.

To this end, 
for any fixed $l\in\R$, we define for $a \in (-\infty,l]$
the function 
\begin{align} \label{Jal}
\begin{split}
I(a;l) 
&:= f_c'(a) + \big( \Phi(q) \, Z^{(q+\lambda)}(l-a;\Phi(q)) + \lambda \, W^{(q+\lambda)}(l-a) \big) \, v_c(l;a,l) \\
&= f_c'(a) + \Phi(q) \, f_c(a) + \lambda \, W^{(q+\lambda)}(l-a) \, v_c(l;a,l),
\end{split}
\end{align}
where the second equality holds by \eqref{vCl}. 
In particular, for 
$a=l\in\R$, we have from \eqref{vCl} that $v_c(l;l,l) = f_c(l)$, hence by \eqref{def_h_p_c} 
we get
\begin{align} \label{J_l_l}
I(l;l) = h_c(l), \quad \forall\; l \in \R.
\end{align}
For any fixed $a\in\R$, we also define for $l \in [a,\infty)$ the function 
\begin{align} \label{Ila}
J(l;a)
&:= Z^{(q+\lambda)}(l-a;\Phi(q)) \, \big( f_p(l)-v_p(l;a,l) \big) 
= f_p(l) \, Z^{(q+\lambda)}(l-a;\Phi(q)) 
- \lambda \Gamma(a;l),
\end{align}
where the second equality holds by \eqref{vPl}. In particular, taking into account \eqref{ZqlPq} and  \eqref{Gamma}, we get
\begin{align} \label{I_a_a}
J(a;a) = f_p(a), \quad \forall\; a \in \R.
\end{align}

Below, we compute the partial derivatives of \eqref{vf_1C} and \eqref{vf_1P} with respect to the threshold under control of each player; the proof is given in Appendix \ref{proof_lemma_derivatives}.

\begin{lemma} \label{lemma_derivatives}
Consider $v_c$ and $v_p$ in \eqref{vf_1C} and \eqref{vf_1P}, respectively. Then, we have:
\begin{enumerate}
\item[(i)] For $a < l \wedge x$,
\begin{align*}
\frac{\partial}{\partial a} v_c(x;a,l) 
&= \frac{Z^{(q+\lambda)}(l-x;\Phi(q))}{Z^{(q+\lambda)}(l-a;\Phi(q))} I(a;l).
\end{align*}

\item[(ii)] For $x \geq a$ and $l > a$ such that $l\neq x$,
\begin{align*}
\frac \partial {\partial l} v_p(x;a,l) 
&= \lambda \bigg( \dfrac{Z^{(q+\lambda)}(l-x;\Phi(q))}{Z^{(q+\lambda)}(l-a;\Phi(q))}W^{(q+\lambda)}(l-a) - W^{(q+\lambda)}(l-x) \bigg) \big( f_p(l) - v_p(l;a,l)  \big) \\
&\stackrel{\normalfont{\eqref{Ila}}}{=} \frac{\lambda}{Z^{(q+\lambda)}(l-a;\Phi(q))} \bigg( \dfrac{Z^{(q+\lambda)}(l-x;\Phi(q))}{Z^{(q+\lambda)}(l-a;\Phi(q))}W^{(q+\lambda)}(l-a) - W^{(q+\lambda)}(l-x) \bigg) J(l;a). 
\end{align*}
\end{enumerate}
\end{lemma}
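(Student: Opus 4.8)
The plan is to compute the two partial derivatives directly from the closed-form expressions \eqref{vf_1C} and \eqref{vf_1P}, using the differentiation rule \eqref{Z_Phi_der} for $Z^{(q+\lambda)}(\,\cdot\,;\Phi(q))$ and the fundamental theorem of calculus for the integral term $\Gamma(\,\cdot\,;l)$.

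\textbf{Part (i).} For $a < l \wedge x$, the function $v_c(x;a,l)$ equals $f_c(a) Z^{(q+\lambda)}(l-x;\Phi(q)) / Z^{(q+\lambda)}(l-a;\Phi(q))$ by \eqref{vf_1C}. I would differentiate this quotient with respect to $a$, treating the numerator factor $Z^{(q+\lambda)}(l-x;\Phi(q))$ as a constant (it does not depend on $a$). The product rule on $f_c(a) / Z^{(q+\lambda)}(l-a;\Phi(q))$ gives two terms: $f_c'(a)/Z^{(q+\lambda)}(l-a;\Phi(q))$ from differentiating $f_c$, and $f_c(a) Z^{(q+\lambda)\prime}(l-a;\Phi(q)) / Z^{(q+\lambda)}(l-a;\Phi(q))^2$ from differentiating the denominator (the chain rule contributes a sign change from $\tfrac{\diff}{\diff a}(l-a) = -1$, and the quotient rule contributes another, so the net sign is positive). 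Now substitute \eqref{Z_Phi_der}, i.e. $Z^{(q+\lambda)\prime}(l-a;\Phi(q)) = \Phi(q) Z^{(q+\lambda)}(l-a;\Phi(q)) + \lambda W^{(q+\lambda)}(l-a)$, factor out $Z^{(q+\lambda)}(l-x;\Phi(q))/Z^{(q+\lambda)}(l-a;\Phi(q))$, and recognise that the remaining bracket is exactly $f_c'(a) + \big(\Phi(q) Z^{(q+\lambda)}(l-a;\Phi(q)) + \lambda W^{(q+\lambda)}(l-a)\big) v_c(l;a,l)$ after using \eqref{vCl} to write $1/Z^{(q+\lambda)}(l-a;\Phi(q)) = v_c(l;a,l)/f_c(a)$. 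This matches $I(a;l)$ in \eqref{Jal}, completing part (i). One should note that at $l = x$ the formula for $v_c$ changes branches but is $C^1$ there (the two pieces agree in value and derivative), so restricting to $l \neq x$ or $a < l \wedge x$ is harmless.

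\textbf{Part (ii).} For $x \geq a$ and $l > a$ with $l \neq x$, I would differentiate $v_p(x;a,l) = \lambda\big(Z^{(q+\lambda)}(l-x;\Phi(q)) Z^{(q+\lambda)}(l-a;\Phi(q))^{-1} \Gamma(a;l) - \Gamma(x;l)\big)$ with respect to $l$. Now $l$ appears in four places, so I would collect the terms carefully: differentiating $Z^{(q+\lambda)}(l-x;\Phi(q))$ gives $Z^{(q+\lambda)\prime}(l-x;\Phi(q))$, differentiating the factor $Z^{(q+\lambda)}(l-a;\Phi(q))^{-1}$ gives $-Z^{(q+\lambda)\prime}(l-a;\Phi(q)) Z^{(q+\lambda)}(l-a;\Phi(q))^{-2}$, and differentiating each $\Gamma(\cdot;l)$ via the Leibniz rule applied to $\Gamma(y;l) = \int_0^{l-y} f_p(l-u) W^{(q+\lambda)}(l-y-u)\,\diff u$ requires both the moving-endpoint contribution $f_p(y) W^{(q+\lambda)}(0)$ (when $W^{(q+\lambda)}$ is continuous at $0$; otherwise its right-limit) and the contribution from the $l$-dependence inside the integrand. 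The cleanest route is to use the second representation $\Gamma(y;l) = \int_0^{l-y} f_p(u+y) W^{(q+\lambda)}(u)\,\diff u$ in which $l$ appears \emph{only} in the upper limit, so $\tfrac{\partial}{\partial l}\Gamma(y;l) = f_p(l) W^{(q+\lambda)}(l-y)$ by a single application of the fundamental theorem of calculus. Then substitute \eqref{Z_Phi_der} for both $Z^{(q+\lambda)\prime}$ terms; the $\Phi(q)$-multiples of $Z^{(q+\lambda)}$ reassemble into $\Phi(q) v_p$-type expressions that cancel against each other, leaving the $\lambda W^{(q+\lambda)}$ terms together with the $f_p(l) W^{(q+\lambda)}$ terms. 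After factoring $\lambda$ and grouping, one obtains $\lambda\big(Z^{(q+\lambda)}(l-x;\Phi(q)) Z^{(q+\lambda)}(l-a;\Phi(q))^{-1} W^{(q+\lambda)}(l-a) - W^{(q+\lambda)}(l-x)\big)\big(f_p(l) - v_p(l;a,l)\big)$, where the appearance of $v_p(l;a,l)$ comes from recognising $\lambda\Gamma(a;l)/Z^{(q+\lambda)}(l-a;\Phi(q)) = v_p(l;a,l)$ from \eqref{vPl}. This is the claimed formula.

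\textbf{Main obstacle.} The only genuinely delicate point is the bookkeeping in part (ii): there are four $l$-dependent factors and a naive expansion produces many terms, so the key is to use the second form of $\Gamma$ (where $l$ sits only in the limit of integration) and to organise the algebra so that the $\Phi(q) Z^{(q+\lambda)}$ contributions cancel before one tries to identify the common bracket and the factor $f_p(l) - v_p(l;a,l)$. A secondary, minor point is justifying differentiation under the expectation / the branch-wise $C^1$ behaviour at $l = x$; this is covered by the smoothness of $W^{(q+\lambda)}$ away from $0$ (Remark \ref{remark_scale_function_properties}) and by the hypothesis $l \neq x$, so I would simply remark on it rather than belabour it.
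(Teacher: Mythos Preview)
Your proposal is correct and follows essentially the same route as the paper's proof: direct differentiation of the closed forms \eqref{vf_1C} and \eqref{vf_1P}, using \eqref{Z_Phi_der} for the $Z^{(q+\lambda)}$-derivatives and the second representation of $\Gamma$ so that $\partial_l \Gamma(y;l) = f_p(l) W^{(q+\lambda)}(l-y)$, then regrouping via \eqref{vCl} and \eqref{vPl}. One small remark: your aside about ``the formula for $v_c$ changes branches at $l=x$'' is off---the branch in \eqref{vf_1C} is at $x=a$, not at $x=l$, and since you are differentiating in $a$ with $a < x$ you are always in the top branch, so this point does not arise.
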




Using an appropriate modification of the identity in \eqref{laplace_upper} for $l \geq a$, we observe that
\[
\dfrac{Z^{(q+\lambda)}(l-x;\Phi(q))}{Z^{(q+\lambda)}(l-a;\Phi(q))}W^{(q+\lambda)}(l-a) - W^{(q+\lambda)}(l-x) > 0, \quad \forall\; x \geq a,
\]
{and hence} 
we conclude from Lemma \ref{lemma_derivatives}.(ii) and the positivity of $Z^{(q+\lambda)}$ from \eqref{Z_Phi_der} that
\begin{align} \label{sign_matches}
\text{sign} \Big(\frac \partial {\partial l} v_p(x;a,l) \Big)
&= \text{sign} \Big( f_p(l) - v_p(l;a,l)  \Big) 
= \text{sign} \Big( J(l;a) \Big), 
\quad \forall\; x \geq a, \; l > a. 
\end{align}
We can therefore extract from Lemma \ref{lemma_derivatives} the following two necessary conditions for the optimality of threshold strategies:
\begin{enumerate}
\item The first-order condition $\frac{\partial}{\partial a} v_c(x;a,l) =0$ for $x\geq a$, required for the optimality (best response to any given $l$) of the candidate threshold $a$, implies that the following condition should hold:
\begin{align}\label{cond_aC}
\mathbf{C}_a: I(a; l)
= 0 \,.
\end{align}

\item The first-order condition $\frac{\partial}{\partial l}v_p(x;a,l)=0$ for $x\geq a$, required for the optimality (best response to any given $a$) of the candidate threshold $l$, implies that the following condition should hold:
\begin{align}\label{cond_lP}
\mathbf{C}_l: f_p(l) = v_p(l;a,l) \quad \Leftrightarrow \quad J(l;a) = 0, 
\end{align}
\end{enumerate}
Overall, the candidate (equilibrium) threshold pair $(a^*,l^*)$ should satisfy both conditions \eqref{cond_aC} and \eqref{cond_lP}, or equivalently satisfy the system of equations $I(a^*,l^*)=J(l^*,a^*)=0$.
The study of the functions $I$ and $J$ will thus be fundamental in the forthcoming analysis.

\subsection{Existence and uniqueness of the 
best response threshold strategies $a^*$ and $l^*$.}
\label{exist}

Now, we will check conditions for the existence and uniqueness 
of the candidate (optimal) thresholds $a^*$ and $l^*$ as best responses to arbitrary choices of thresholds by the opponent player. 

%
%

\subsubsection{Best response for Player C}
\label{exist_a}

Recall that, in order for the candidate (equilibrium) threshold pair $(a^*,l^*)$ to satisfy condition $\mathbf{C}_a$ given by \eqref{cond_aC}, we must have $I(a^*;l^*)=0$.
Hence, we aim at proving the existence of a unique solution to the equation $I(\cdot;l)=0$ for an appropriate range of values of $l$. We begin with the most general case of $l \in\R$, by focusing only on player $C$ (cf.\ Lemma \ref{remark_domain_to_focus}.(i)--(i)'), even though for optimality we will later only require $l^* \leq \overline{x}_p$ (cf.\ \eqref{nash_threshold_new} for $v_p$). 
In view of \eqref{nash_threshold_new} for $v_c$, we therefore search for solutions to  $I(\cdot;l)=0$ on $(-\infty, \ol x_c \wedge l]$. 
Refer to Figure \ref{figure_C_l_a}.(i) in Section \ref{section_numerics} for sample plots of the function $a \mapsto I(a; l)$.


\begin{lemma} \label{lem:exist_a}
Recall that $\underline{x}_c > -\infty$ (cf.\ Lemma \ref{lemma_x_c_finite}) and the function $I(\cdot;l)$ defined in \eqref{Jal}.
\begin{enumerate}
\item[(I)] Suppose that $l <  \underline{x}_c$. Then, there does not exist $a \leq l$ such that $I(a;l) = 0$.

\item[(II)] 
Suppose that $l \geq \underline{x}_c$. Then, there exists on $(-\infty, \ol x_c \wedge l]$, 
a unique root $\tilde{a}(l)$ such that $I(\tilde{a}(l);l) = 0$, i.e. satisfying \eqref{cond_aC}. In addition, we have 
\begin{enumerate}
\item[(i)]
$\tilde{a}(l) \in [\ul x_c, \, \ol x_c \wedge l)$ 
and $h_c(\tilde{a}(l)) \leq 0$. 
In particular,
$\tilde{a}(\underline{x}_c) = \underline{x}_c.$

\item[(ii)] 
$I(a;l) > 0$ for all $a < \tilde{a}(l)$ and $I(a;l) < 0$ for all $a > \tilde{a}(l)$. 

\item[(iii)] 
for all $x \in \mathbb{R}$, that 
$v_c(x;\tilde{a}(l),l) = \max_{a \in (-\infty, x \wedge l \wedge \overline{x}_c]} v_c(x;a,l)$  
which is further upgraded to 
$$v_c(x;\tilde{a}(l),l)  = \max_{a \in \R} v_c(x;a,l).$$
\end{enumerate}
\end{enumerate}
\end{lemma}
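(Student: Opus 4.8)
Here is my plan for proving Lemma \ref{lem:exist_a}.

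\textbf{Overall strategy.} The proof hinges on analysing the function $a \mapsto I(a;l)$ on the interval $(-\infty, \overline{x}_c \wedge l]$ and showing it has the sign pattern of a function that is positive, then crosses zero exactly once, then is negative. The key structural fact is identity \eqref{J_l_l}, namely $I(l;l) = h_c(l)$, which anchors the value of $I(\cdot;l)$ at the right endpoint $a=l$ (when $l \leq \overline{x}_c$) to the already-understood function $h_c$, whose unique sign change occurs at $\underline{x}_c$ (cf. \eqref{x_p_and_h_p}). The other anchor is the behaviour as $a \downarrow -\infty$: I would show that $I(a;l) \to +\infty$ (or at least stays strictly positive) using that $f_c$ is strictly decreasing with $f_c' $ bounded away from... — more precisely, since $h_c^o(a) = \Phi(q) f_c(a) + f_c'(a) > 0$ for $a < \underline{a} \leq \underline{x}_c < \infty$, and the extra term $\lambda W^{(q+\lambda)}(l-a) v_c(l;a,l)$ in \eqref{Jal} is nonnegative for $a$ with $f_c(a) > 0$ (note $v_c(l;a,l) = f_c(a)/Z^{(q+\lambda)}(l-a;\Phi(q))$ from \eqref{vCl}, and $f_c(a) > 0$ for $a < \overline{x}_c$), we get $I(a;l) \geq h_c^o(a) > 0$ for $a < \underline{a}$.

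\textbf{Part (I): $l < \underline{x}_c$.} Here I would argue that $I(a;l) > 0$ for all $a \leq l$, so no root exists. For $a \leq l < \underline{x}_c \leq \overline{x}_c$ we have $f_c(a) > 0$, so the term $\lambda W^{(q+\lambda)}(l-a) v_c(l;a,l) \geq 0$, giving $I(a;l) \geq f_c'(a) + \Phi(q) f_c(a) = h_c^o(a)$. Since $a \leq l < \underline{x}_c$ and $h_c^o$ is decreasing with $h_c^o = h_c - \lambda W^{(q+\lambda)}(0) f_c \geq h_c$ on $\{f_c > 0\}$... wait — actually $h_c^o \geq h_c$ precisely where $f_c \geq 0$, so $h_c^o(a) \geq h_c(a) > 0$ for $a < \underline{x}_c$. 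Hence $I(a;l) > 0$ for all $a \leq l$, proving (I).

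\textbf{Part (II): $l \geq \underline{x}_c$.} I would split into the right-endpoint evaluation and a monotonicity-in-sign argument. At $a = \overline{x}_c \wedge l$: if $l \geq \overline{x}_c$, then $I(\overline{x}_c; l) = f_c'(\overline{x}_c) + \Phi(q) f_c(\overline{x}_c) + \lambda W^{(q+\lambda)}(l - \overline{x}_c) v_c(l; \overline{x}_c, l)$; since $f_c(\overline{x}_c) = 0$ this equals $f_c'(\overline{x}_c) < 0$. If instead $l < \overline{x}_c$, then $I(l;l) = h_c(l) \leq 0$ because $l \geq \underline{x}_c$ (strict if $l > \underline{x}_c$; at $l = \underline{x}_c$ it is zero, which already gives the root $\tilde a(\underline{x}_c) = \underline{x}_c$, handling the ``in particular'' claim). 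Combined with $I(a;l) > 0$ for $a$ near $-\infty$, the intermediate value theorem gives existence of a root in $[\underline{a}, \overline{x}_c \wedge l]$; then I would refine the lower bound to $\underline{x}_c$ and establish (i)--(ii) simultaneously by proving the \emph{single-crossing} property: I claim that whenever $I(a;l) \leq 0$ for some $a$, then $I(a';l) < 0$ for all $a' \in (a, \overline{x}_c \wedge l]$. Granting this, the root is unique, $I > 0$ strictly to its left and $I < 0$ strictly to its right, which is (ii); and $\tilde a(l) \geq \underline{x}_c$ follows since $I(\underline{x}_c; l)$... I'd need to check $I(\underline{x}_c; l) \geq 0$, which holds because $h_c(\underline{x}_c) = 0$ and the single-crossing forces the root not to lie below $\underline{x}_c$ — more carefully, if $a < \underline{x}_c$ then as shown in Part (I) $I(a;l) \geq h_c^o(a) \geq h_c(a) > 0$, so no root below $\underline{x}_c$. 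Also $h_c(\tilde a(l)) \leq 0$ follows since $\tilde a(l) \geq \underline{x}_c$ and $h_c$ decreasing.

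\textbf{Part (iii) and the main obstacle.} Given (ii), Lemma \ref{lemma_derivatives}.(i) shows $\frac{\partial}{\partial a} v_c(x;a,l)$ has the same sign as $I(a;l)$ on $\{a < l \wedge x\}$, so $v_c(x;\cdot,l)$ increases then decreases on $(-\infty, x \wedge l \wedge \overline{x}_c]$ with maximiser at $\tilde a(l)$ (using continuity to include the endpoint); combined with Lemma \ref{remark_domain_to_focus}.(i)$'$ (applicable since $l \geq \underline{x}_c \geq \underline{a}$) this upgrades to the maximum over all $a \in \R$. The step I expect to be the main obstacle is establishing the single-crossing property of $I(\cdot;l)$: unlike $h_c^o$, the function $I(\cdot;l)$ has the extra $l$-dependent term $\lambda W^{(q+\lambda)}(l-a) f_c(a)/Z^{(q+\lambda)}(l-a;\Phi(q))$, whose monotonicity in $a$ is not obvious — here I anticipate needing Lemma \ref{W/Z} (monotonicity of $W^{(q+\lambda)}/Z^{(q+\lambda)}(\cdot;\Phi(q))$), together with $f_c$ decreasing and concave, to control the sign of the derivative $\partial_a I(a;l)$, or at least to show $\partial_a I < 0$ at any point where $I = 0$. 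Making that sign computation clean is the crux of the argument.
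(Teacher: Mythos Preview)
Your overall architecture is right and matches the paper's: anchor $I(\cdot;l)$ at the endpoints via $I(l;l)=h_c(l)$ and $I(\overline x_c;l)=f_c'(\overline x_c)$, establish positivity for small $a$, and then prove a single-crossing property to get uniqueness and (ii). Your plan for (iii) via Lemma~\ref{lemma_derivatives}(i) and Lemma~\ref{remark_domain_to_focus}(i)$'$ is exactly what the paper does, and your approach to uniqueness (``$\partial_a I<0$ at every zero'') is essentially the paper's argument in disguise: the paper computes
\[
\partial_a I(a+;l)=f_c''(a+)+\Phi(q)f_c'(a)-\lambda W^{(q+\lambda)\prime}((l-a)-)\,v_c(l;a,l)+\lambda\,\frac{W^{(q+\lambda)}(l-a)}{Z^{(q+\lambda)}(l-a;\Phi(q))}\,I(a;l),
\]
then multiplies by the integrating factor $\exp\{\lambda\int_a^l W^{(q+\lambda)}(l-u)/Z^{(q+\lambda)}(l-u;\Phi(q))\,\diff u\}$ to kill the $I$-term and obtain a globally decreasing $\overline I$. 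Evaluating your derivative at a zero of $I$ gives the same three strictly negative terms; either route works.

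There is, however, a genuine gap in your Part~(I) and in your lower bound $\tilde a(l)\geq\underline x_c$. You argue $I(a;l)\geq h_c^o(a)$ (correct, since the extra term is nonnegative for $a<\overline x_c$), and then claim ``$h_c^o\geq h_c$ precisely where $f_c\geq 0$''. This inequality is backwards: from \eqref{def_h_p_c}, $h_c=h_c^o+\lambda W^{(q+\lambda)}(0)f_c$, so on $\{f_c>0\}$ one has $h_c^o\leq h_c$, with strict inequality in the bounded variation case. Consequently, for $a\in[\underline a,\underline x_c)$ (a nonempty interval when $W^{(q+\lambda)}(0)>0$) you have $h_c^o(a)\leq 0<h_c(a)$, and your bound $I(a;l)\geq h_c^o(a)$ yields nothing. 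The paper's fix is to keep the extra term rather than discard it: for $a\leq\underline x_c$ use $h_c(a)\geq 0$, i.e.\ $f_c'(a)+\Phi(q)f_c(a)\geq -\lambda W^{(q+\lambda)}(0)f_c(a)$, and combine with \eqref{vCl} to get
\[
I(a;l)\;\geq\;\lambda f_c(a)\Big(\frac{W^{(q+\lambda)}(l-a)}{Z^{(q+\lambda)}(l-a;\Phi(q))}-W^{(q+\lambda)}(0)\Big)\;>\;0,
\]
the last inequality by Lemma~\ref{W/Z} (which you correctly anticipated needing, just not at this step). This simultaneously repairs Part~(I) and the argument that no root lies below $\underline x_c$.
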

\begin{proof}
%

Suppose $a \leq \underline{x}_c $ and $a < l$.  Then, we have by \eqref{x_p_and_h_p} that $h_c(a) \geq 0$, or equivalently by \eqref{def_h_p_c} that $f_c'(a) + \Phi(q) \, f_c(a) \geq - \lambda W^{(q+\lambda)}(0) \, f_c(a)$.
Taking this into account together with the definition \eqref{Jal} of $I(\cdot;l)$ and the equation \eqref{vCl}, we get  
\begin{align} \label{I(a)>0} 
I(a;l) 
&\geq \lambda \left( W^{(q+\lambda)}(l-a) \, v_c(l;a,l) - W^{(q+\lambda)}(0) \, f_c(a) \right) \notag\\
&= \lambda \, f_c(a) \left( \frac{W^{(q+\lambda)}(l-a)}{Z^{(q+\lambda)}(l-a;\Phi(q))} - W^{(q+\lambda)}(0) \right) > 0,
\end{align}
where the latter inequality follows from Lemma \ref{W/Z}, the fact that $Z^{(q+\lambda)}(0;\Phi(q)) = 1$ and because $f_c(a) > 0$ thanks to $a  \leq  
\underline{x}_c  < \overline{x}_c$ and \eqref{about_x_upper}.

{\it Part }(I). For $l < \underline{x}_c$, it is straightforward to see from the above result that $I(a;l)  > 0$ for all $a < l <  \underline{x}_c$, {while we also know that $I(l;l) = h_c(l) > 0$ for all $l <  \underline{x}_c$}. 

{\it Part }(II). 
{\it Step 1: Existence.} For $l = \underline{x}_c$, \eqref{J_l_l} gives (recalling $\underline{x}_c > -\infty$ by Lemma \ref{lemma_x_c_finite})
\begin{align} \label{J_at_x_c_under}
I(\underline{x}_c; \underline{x}_c) = h_c(\underline{x}_c) = 0,
\end{align}
thus existence is straightforward. 
For $l > \underline{x}_c$, 
we have from \eqref{I(a)>0} that $I(\underline{x}_c;l) > 
0$. 
On the other hand, it follows from \eqref{J_l_l} and \eqref{x_p_and_h_p} with $l >  \underline{x}_c$, that 
\begin{align*} 
I(l;l) = h_c(l) < 0 \,,
\end{align*}
and when $l>\overline{x}_c$, we also have from \eqref{Jal} and \eqref{vCl}  that
\begin{align*} 
I(\overline{x}_c;l) &= f_c'(\overline{x}_c) < 0.
\end{align*}
 Now thanks to the continuity of $I(\cdot; l)$, there must exist at least one $a$ such that $I(a;l) = 0$ on 
 $[\underline{x}_c, l \wedge \overline{x}_c]$.

{\it Step 2: Uniqueness.} 
By differentiating \eqref{Jal} and using Lemma \ref{lemma_derivatives}.(i), we obtain that 
\begin{align}
\label{uni_der_J_2}
\begin{split}
\frac{\partial}{\partial a}I(a+;l) 
= f_c''(a+) + \Phi(q) f_c'(a) - \lambda\, {W^{(q+\lambda)}}'((l-a)-) \, v_c(l;a,l) + \lambda \, \frac{W^{(q+\lambda)}(l-a)}{Z^{(q+\lambda)}(l-a;\Phi(q))} \, I(a;l).
\end{split}
\end{align}
Given that the sign of \eqref{uni_der_J_2} is unclear, we define, for all $a\in(-\infty, l \wedge \overline{x}_c]$, the function
\begin{align}\label{def_overline_J}
\overline{I}(a;l):=\exp\bigg\{\lambda\int_a^l\frac{W^{(q+\lambda)}(l-u)}{Z^{(q+\lambda)}(l-u;\Phi(q))}\diff u\bigg\}I(a;l).
\end{align}
The first derivative of $\ol I(\cdot;l)$ can be obtained by using \eqref{uni_der_J_2}, namely, we have for all $a\leq l \wedge \overline{x}_c$ that 
\begin{align*}
\frac{\partial}{\partial a} \overline{I}(a+;l) = 
\exp\bigg\{\lambda\int_a^l\frac{W^{(q+\lambda)}(l-u)}{Z^{(q+\lambda)}(l-u;\Phi(q))} \diff u\bigg\} \left( f_c''(a+) + \Phi(q) f_c'(a) - \lambda \, {W^{(q+\lambda)}}'((l-a)-) \, v_c(l;a,l) \right) 
\end{align*}
which is negative due to 
Assumption \ref{Ass} and the positivity of ${W^{(q+\lambda)}}'((l-a)-)$ and $v_c(l;a,l)$  thanks to $a < \overline{x}_c$.
Hence, for any fixed $l\in\R$, the mapping $a\mapsto \overline{I}(a;l)$ is 
decreasing on $(-\infty, l \wedge \overline{x}_c)$, which yields $\ol I(\cdot;l) = 0$ has at most one solution  on $(-\infty, l \wedge \overline{x}_c)$. 
In view of its definition in \eqref{def_overline_J}, it is straightforward to see that also $I(\cdot;l)=0$ has at most one solution 
 on $(-\infty, l \wedge \overline{x}_c)$.
Hence, the solution 
to the equation $I(\tilde{a}(l);l) = 0$, established in {\it Step 1} on $[\ul x_c, \,\ol x_c \wedge l)$ is unique and denoted by $\tilde{a}(l)$. 

{\it Step 3: Proof of part} (i). This follows from the inequalities obtained in {\it step 1}.

{\it Step 4: Proof of part} (ii). This follows directly from the results in {\it steps 1} and {\it 2}.

{\it Step 5: Proof of part} (iii). Lemma \ref{lemma_derivatives}.(i) and the above results imply that $\tilde{a}(l)$ satisfies
$$v_c(x;\tilde{a}(l),l) = \max_{a \in (-\infty, x \wedge l \wedge \overline{x}_c]} v_c(x;a,l).$$ 
Since $l \geq \underline{x}_c \geq \underline{a}$, Lemma \ref{remark_domain_to_focus}.(i)' gives $v_c(x;\tilde{a}(l),l)  = \max_{a \in \R} v_c(x;a,l)$, which completes the proof. 
\end{proof}

Below we present the continuity and monotonicity of the threshold $\tilde{a}(l)$ with respect to the arbitrarily chosen -- until this stage of analysis -- threshold 
$l \in [\ul x_c,\infty)$, according to Lemma \ref{lem:exist_a}.(II).
\begin{lemma}
\label{aincr}
The function $l \mapsto \tilde{a}(l)$, which is defined in Lemma \ref{lem:exist_a}.(II), is continuous and strictly increasing in $l$ on {$(\ul x_c, \infty)$. }
\end{lemma}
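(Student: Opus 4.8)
The plan is to prove continuity and strict monotonicity of $l \mapsto \tilde a(l)$ on $(\ul x_c, \infty)$ using the implicit function theorem together with the sign information already established in Lemma \ref{lem:exist_a}. First I would observe that $\tilde a(l)$ is characterised as the unique zero of the continuous function $a \mapsto I(a;l)$ on $(-\infty, \ol x_c \wedge l]$, and that by Lemma \ref{lem:exist_a}.(II)(ii) we have $I(a;l) > 0$ for $a < \tilde a(l)$ and $I(a;l) < 0$ for $a > \tilde a(l)$; moreover, from the computation of $\frac{\partial}{\partial a}\overline I(a+;l)$ in the proof of Lemma \ref{lem:exist_a} (Step 2), the auxiliary function $\overline I(\cdot;l)$ has strictly negative right-derivative near its root, which — via the exponential prefactor in \eqref{def_overline_J} being strictly positive — transfers to give that $\frac{\partial}{\partial a} I(a+;l) < 0$ at $a = \tilde a(l)$ (in fact $I$ is strictly decreasing through its root). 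Since $I$ is jointly continuous in $(a,l)$ (being built from $f_c$, $f_c'$, the scale functions $W^{(q+\lambda)}$, $Z^{(q+\lambda)}$, which are continuous), the standard implicit-function/sandwich argument gives continuity: for $\varepsilon > 0$ small, $I(\tilde a(l) - \varepsilon; l) > 0 > I(\tilde a(l) + \varepsilon; l)$, and by continuity in $l$ these strict inequalities persist for $l'$ near $l$, forcing $\tilde a(l') \in (\tilde a(l) - \varepsilon, \tilde a(l) + \varepsilon)$.

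Next, for strict monotonicity I would show that $l \mapsto I(a;l)$ is strictly increasing for each fixed $a$ in the relevant range (equivalently, $\frac{\partial}{\partial l} I(a;l) > 0$). From the second expression in \eqref{Jal}, $I(a;l) = f_c'(a) + \Phi(q) f_c(a) + \lambda W^{(q+\lambda)}(l-a) v_c(l;a,l)$, and using \eqref{vCl} this equals $f_c'(a) + \Phi(q) f_c(a) + \lambda f_c(a) \dfrac{W^{(q+\lambda)}(l-a)}{Z^{(q+\lambda)}(l-a;\Phi(q))}$. Since $f_c(a) > 0$ for $a \leq \ol x_c$ (indeed $\tilde a(l) < \ol x_c$, cf.\ Lemma \ref{lem:exist_a}.(II)(i), so we only need the claim for $a$ near $\tilde a(l)$, where $f_c(a) > 0$ by \eqref{about_x_upper}), and since $u \mapsto W^{(q+\lambda)}(u)/Z^{(q+\lambda)}(u;\Phi(q))$ is strictly increasing on $(0,\infty)$ by Lemma \ref{W/Z}, it follows that $l \mapsto I(a;l)$ is strictly increasing on $(a,\infty)$. (One should check Lemma \ref{W/Z} gives \emph{strict} monotonicity, or argue it directly from the defining property; the statement as given says ``increasing'' but the proof in Appendix \ref{proof_W/Z} should yield strictness, or strictness can be recovered since $W^{(q+\lambda)}$ is strictly increasing and $Z^{(q+\lambda)}(\cdot;\Phi(q))$ grows no faster than the exponential rate.)

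Then strict monotonicity of $\tilde a(\cdot)$ follows by a comparison argument: take $\ul x_c < l_1 < l_2$. Evaluating at $a = \tilde a(l_1)$, we have $I(\tilde a(l_1); l_1) = 0$, and by the strict increase in $l$ just shown, $I(\tilde a(l_1); l_2) > I(\tilde a(l_1); l_1) = 0$ (noting $\tilde a(l_1) < l_1 < l_2$ so we are in the valid range). Since $I(\cdot;l_2) > 0$ precisely on $(-\infty, \tilde a(l_2))$ by Lemma \ref{lem:exist_a}.(II)(ii), this forces $\tilde a(l_1) < \tilde a(l_2)$, which is the desired strict monotonicity. Finally, continuity combined with strict monotonicity on the open interval completes the statement; one may additionally note $\tilde a(\ul x_c) = \ul x_c$ from Lemma \ref{lem:exist_a}.(II)(i) for the boundary behaviour, though it is not required on the open interval.

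The main obstacle I anticipate is the care needed at the sandwich/implicit-function step to ensure the sign-change of $I(\cdot;l)$ is \emph{strict} on both sides of $\tilde a(l)$ and robust under perturbation of $l$ — this is exactly where one must invoke that $I(\cdot;l)$ is strictly decreasing through its root (from Step 2 of Lemma \ref{lem:exist_a}'s proof, via $\overline I$), rather than merely having a sign change, since a non-strict crossing could create a plateau of zeros and destroy continuity. A secondary technical point is confirming that the relevant arguments $l-a$ stay strictly positive (so that Lemma \ref{W/Z}'s strict monotonicity on $(0,\infty)$ applies), which is guaranteed because $\tilde a(l) < \ol x_c \wedge l \leq l$. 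Everything else is routine continuity of scale-function expressions.
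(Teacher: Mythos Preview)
Your proposal is correct and follows essentially the same approach as the paper: both proofs hinge on the joint continuity of $(a,l)\mapsto I(a;l)$, the uniqueness/sign characterisation of the root from Lemma~\ref{lem:exist_a}.(II), and the strict $l$-monotonicity of $I(a;l)$ derived via Lemma~\ref{W/Z} and \eqref{vCl}. The only cosmetic differences are that the paper argues continuity by contradiction (two subsequential limits of $\tilde a(l_n)$ would both solve $I(\cdot;l^\dagger)=0$, violating uniqueness) rather than your sandwich argument, and monotonicity likewise by contradiction rather than your direct comparison; your concern about a ``plateau of zeros'' is already excluded by Lemma~\ref{lem:exist_a}.(II)(ii), so the extra appeal to $\overline I$ is unnecessary.
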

\begin{proof}
To show the continuity,
we argue by contradiction, assuming that 
it fails to be continuous at some $l^\dagger$, so that there exist two sequences $(l_n^-)$ and $(l_n^+)$ converging to $l^\dagger$ with $\lim_{n \to \infty}\tilde{a}(l_n^-) \neq \lim_{n \to \infty}\tilde{a}(l_n^+)$.
Then, noting that $(a,l) \mapsto I(a;l)$ is continuous, we have
\[
I(\lim_{n \to \infty} \tilde{a}(l_n^-); l^\dagger) = \lim_{n \to \infty}I(\tilde{a}(l_n^-); l_n^-) = 0 =  \lim_{n \to \infty}I(\tilde{a}(l_n^+); l_n^+) =  I(\lim_{n \to \infty} \tilde{a}(l_n^+); l^\dagger).
\]
This contradicts with the uniqueness of the root  $\tilde{a}(l^\dagger)$ of $I(\cdot; l^\dagger) = 0$ as established in 
Lemma \ref{lem:exist_a}.

Now to show the monotonicity, we 
combine the definition of $I(a;l)$ in \eqref{Jal} with the expression in \eqref{vCl} and we calculate the partial derivative of $I(a;l)$ with respect to $l$, for all $a < \overline{x}_c$ (so that $f_c(a) > 0$), given by
\begin{align}  \label{partialJl}
\frac{\partial}{\partial l} I(a; l) 
&= \lambda \, \frac{\partial}{\partial l} \left( W^{(q+\lambda)}(l-a) \, v_c(l;a,l) \right) = \lambda \, f_c(a) \, \frac{\partial}{\partial l} \bigg( \frac{W^{(q+\lambda)}(l-a)}{Z^{(q+\lambda)}(l-a;\Phi(q))} \bigg) > 0, 
\end{align}
for a.e. $l > a$; the latter inequality follows from Lemma \ref{W/Z}.
%
Then, we argue again by contradiction, assuming
that there exists $l^\dagger \in 
(\ul x_c, \infty)$ and 
$\delta > 0$, so that 
$\tilde{a}(l^\dagger) \geq \tilde{a}(l^\dagger + \delta)$. 
Since $I(\tilde{a}(l^\dagger); l^\dagger) = I(\tilde{a}(l^\dagger+ \delta) ; l^\dagger+\delta) = 0$, we have
\[
I(\tilde{a}(l^\dagger+ \delta) ; l^\dagger+\delta) - I(\tilde{a}(l^\dagger) ; l^\dagger+\delta) = I(\tilde{a}(l^\dagger) ; l^\dagger) - I(\tilde{a}(l^\dagger) ; l^\dagger+\delta) < 0, 
\]
where the last inequality holds by \eqref{partialJl} (and because $\tilde{a}(l^\dagger) < \overline{x}_c$ by Lemma \ref{lem:exist_a}.(II).(i)). 
Hence, again by $I(\tilde{a}(l^\dagger+ \delta) ; l^\dagger+\delta)=0$, we get
\[
I(\tilde{a}(l^\dagger) ; l^\dagger+\delta) > 0.
\]
Now the assumption $\tilde{a}(l^\dagger) \geq \tilde{a}(l^\dagger+ \delta)$ contradicts with the fact that $I(a; l^\dagger+\delta) \leq 0$ for $a \geq \tilde{a}(l^\dagger+\delta)$ as in Lemma \ref{lem:exist_a}.(II).(ii), which completes the proof.
\end{proof}

\subsubsection{Best response for Player P}
\label{exist_l}


By Proposition \ref{limit_math_W} and the expressions in \eqref{vPl}
and the definition \eqref{Ila} of $J$,
we get, for $a\leq x$, that
\begin{align} \label{v_p_other_form}
\begin{split}
v_p(x;a,l) 
&=Z^{(q+\lambda)}(l-x;\Phi(q)) v_p(l;a,l)
- \lambda \Gamma(x;l) \\
&= Z^{(q+\lambda)}(l-x;\Phi(q)) \, \big( v_p(l;a,l) - f_p(l) \big) + J(l;x). 
\end{split}
\end{align}

 
Recall that, in order for the candidate equilibrium threshold pair $(a^*,l^*)$ to satisfy condition $\mathbf{C}_l$ given by \eqref{cond_lP}, we must have $J(l^*;a^*)=0$. Hence, we aim at proving the existence of a unique solution to the equation $J(\cdot;a)=0$ for an appropriate range of values of $a$.
We begin with the most general case of $a \in (-\infty, \overline{x}_p)$, by focusing only on player $P$ (cf.\ Lemma \ref{remark_domain_to_focus}.(ii)), even though for optimality we will later only require $a^* \leq \overline{x}_c$ (cf.\ \eqref{nash_threshold_new} for $v_c$). 
In view of \eqref{nash_threshold_new} for $v_p$, we therefore search for solutions to $J(\cdot;a)=0$ on $[a, \overline{x}_p]$. 
Refer to Figure \ref{figure_C_l_a}.(ii) in Section \ref{section_numerics} 
for sample plots of the function $l \mapsto J(l; a)$.
 


\begin{lemma} \label{lem:exist_l}
Fix $a < \overline{x}_p$ 
and recall the function $J(\cdot;a)$ defined in \eqref{Ila}.
\begin{enumerate}
\item[(i)] There exists on $[a, \overline{x}_p]$, 
a unique root $\tilde{l}(a)$ such that $J(\tilde{l}(a);a) = 0$. 
In addition, this satisfies
\begin{equation}\label{cond_l_optP}
 \tilde{l}(a) \in (\underline{x}_p \vee a, \bar{x}_p) \qquad \textrm{and} \qquad h_p(\tilde{l}(a) ) < 0, \text{ where $h_p$ is defined in \eqref{def_h_p_c}.}
\end{equation}

\item[(ii)] 
We have $J(l;a) > 0$, for all $l < \tilde{l}(a)$, and $J(l;a) < 0$, for all $l > \tilde{l}(a)$. 

\item[(iii)] 
For $x \in \R$, 
we have 
$v_p(x;a,\tilde{l}(a)) = \max_{l \in [a, \overline{x}_p]} v_p(x;a,l)$     
which is further upgraded to 
$$v_p(x;a,\tilde{l}(a)) = \max_{l \in \R} v_p(x;a,l). $$
\end{enumerate}
\end{lemma}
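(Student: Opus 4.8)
The plan is to mimic the structure of the proof of Lemma \ref{lem:exist_a}, working with the function $J(\cdot;a)$ and exploiting the sign information in \eqref{sign_matches} together with the monotonicity properties of scale functions. First I would establish existence by evaluating $J$ at the endpoints of $[a,\overline{x}_p]$. At the left endpoint, \eqref{I_a_a} gives $J(a;a) = f_p(a)$, which is strictly positive when $a < \overline{x}_p \leq \underline{x}_p$ fails and, more carefully, I should split on whether $a < \overline{x}_p$ forces $f_p(a) > 0$: by \eqref{about_x_upper} this holds precisely when $a < \overline{x}_p$, so indeed $J(a;a) = f_p(a) > 0$. At the right endpoint, using \eqref{Ila} and \eqref{J_l_l}-style reasoning, I would show $J(\overline{x}_p;a) < 0$: from the representation $J(l;a) = Z^{(q+\lambda)}(l-a;\Phi(q))(f_p(l) - v_p(l;a,l))$ and the fact (to be extracted from Lemma \ref{lemma_derivatives}.(ii) and \eqref{sign_matches}) that $\mathrm{sgn}(\partial_l v_p) = \mathrm{sgn}(f_p(l) - v_p(l;a,l)) = \mathrm{sgn}(J(l;a)/Z^{(q+\lambda)})$, combined with the observation that at $l = \overline{x}_p$ one has $J(\overline{x}_p;a) = f_p(\overline{x}_p) Z^{(q+\lambda)}(\overline{x}_p - a;\Phi(q)) - \lambda\Gamma(a;\overline{x}_p)$ with $f_p(\overline{x}_p) = 0$, so $J(\overline{x}_p;a) = -\lambda\Gamma(a;\overline{x}_p) \leq 0$, and in fact strictly negative because $\Gamma(a;\overline{x}_p) > 0$ (the integrand $f_p(u+a) W^{(q+\lambda)}(u)$ is positive for small $u$ near $0^+$ since $f_p(a) > 0$ and $W^{(q+\lambda)}$ is eventually positive). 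Continuity of $l \mapsto J(l;a)$ then yields a root $\tilde{l}(a) \in (a, \overline{x}_p)$.

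Second, for uniqueness I would compute $\partial_l J(l;a)$ by differentiating \eqref{Ila} directly: using \eqref{Z_Phi_der} for the derivative of $Z^{(q+\lambda)}(\cdot;\Phi(q))$ and the elementary derivative of $\Gamma(a;l)$ from \eqref{Gamma} (namely $\partial_l \Gamma(a;l) = f_p(l) W^{(q+\lambda)}(0) + \int_0^{l-a} f_p(u+a) W^{(q+\lambda)\prime}((l-a-u)+)\,\diff u$ when appropriately interpreted, or more cleanly via the alternative form), I expect to obtain
$$
\partial_l J(l;a) = f_p'(l) Z^{(q+\lambda)}(l-a;\Phi(q)) + f_p(l)\big(\Phi(q) Z^{(q+\lambda)}(l-a;\Phi(q)) + \lambda W^{(q+\lambda)}(l-a)\big) - \lambda\big(\text{derivative terms of }\Gamma\big),
$$
and after cancellation this should reduce to something proportional to $h_p(l) Z^{(q+\lambda)}(l-a;\Phi(q))$ plus a term that is controlled in sign on the relevant region, or else — more robustly — I would use the integrating-factor trick of Lemma \ref{lem:exist_a}: define $\overline{J}(l;a) := \exp\{-\int_a^l \Phi(q)\,\diff u - \cdots\} J(l;a)$ or a similar multiplicative modification so that $\partial_l \overline{J}$ has a clean sign. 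Given \eqref{x_p_and_h_p}, $h_p$ changes sign exactly once (from $+$ to $-$) at $\underline{x}_p$, and since any root of $J(\cdot;a)$ in the interior satisfies the first-order condition $\mathbf{C}_l$, at such a root $\partial_l J$ should be strictly negative (the value function touches $f_p$ from below), which forces uniqueness. I would then read off \eqref{cond_l_optP}: $\tilde{l}(a) > \underline{x}_p$ because for $l \leq \underline{x}_p$ one has $h_p(l) \geq 0$ hence $J$ is still increasing / positive there, and $h_p(\tilde{l}(a)) < 0$ because $\tilde{l}(a)$ lies strictly above $\underline{x}_p$; also $\tilde{l}(a) > a$ from $J(a;a) = f_p(a) > 0$.

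Third, part (ii) on the sign pattern of $J(l;a)$ follows immediately once uniqueness of the root and the strict-sign behaviour of $\partial_l J$ at the root are established (continuity plus a single crossing). Part (iii) then follows from Lemma \ref{lemma_derivatives}.(ii) and \eqref{sign_matches}: since $\mathrm{sgn}(\partial_l v_p(x;a,l)) = \mathrm{sgn}(J(l;a)/Z^{(q+\lambda)})$ for every $x \geq a$, the map $l \mapsto v_p(x;a,l)$ increases on $(a, \tilde{l}(a))$ and decreases on $(\tilde{l}(a), \overline{x}_p)$, so $\tilde{l}(a)$ maximises $v_p(x;a,\cdot)$ over $[a,\overline{x}_p]$; the upgrade to the maximum over all of $\R$ is exactly Lemma \ref{remark_domain_to_focus}.(ii), which reduces $\max_{l\in\R}$ to $\max_{l \in [a,\overline{x}_p]}$. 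The main obstacle I anticipate is the uniqueness step: unlike the $h_c^o$ case, the derivative $\partial_l J(l;a)$ mixes the sign of $h_p$ with scale-function factors that need not have a fixed sign a priori, so finding the right integrating factor (or the right region on which to argue) to make $\partial_l \overline{J}$ manifestly negative will require care — in particular handling the bounded-variation case where $W^{(q+\lambda)}(0) = \mu^{-1} > 0$ contributes an extra boundary term in $\partial_l\Gamma$.
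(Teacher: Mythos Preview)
Your proposal is correct and follows essentially the same route as the paper: endpoint signs for existence, derivative of $J$ for uniqueness, then \eqref{sign_matches} and Lemma \ref{remark_domain_to_focus}.(ii) for part (iii).

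The one place where you are working harder than necessary is the uniqueness step. Using the second representation in \eqref{Gamma}, namely $\Gamma(a;l)=\int_0^{l-a} f_p(u+a)\,W^{(q+\lambda)}(u)\,\diff u$, the integrand is independent of $l$, so $\partial_l\Gamma(a;l)=f_p(l)\,W^{(q+\lambda)}(l-a)$ with no boundary subtleties and no distinction between bounded and unbounded variation. Combining this with \eqref{Z_Phi_der}, the $\lambda f_p(l) W^{(q+\lambda)}(l-a)$ term cancels exactly, and one obtains the clean identity
\[
\frac{\partial}{\partial l}J(l;a)=h_p(l)\,Z^{(q+\lambda)}(l-a;\Phi(q)),
\]
with no residual term. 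Hence $J(\cdot;a)$ is strictly increasing on $(a,\underline{x}_p)$ and strictly decreasing on $(\underline{x}_p,\infty)$, which together with $J(a;a)>0$ and $J(\overline{x}_p;a)<0$ gives uniqueness directly; no integrating-factor trick is needed here (unlike in Lemma \ref{lem:exist_a}). Your anticipated ``main obstacle'' therefore dissolves once you pick the right form of $\Gamma$ to differentiate.
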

\begin{proof}
We first prove parts (i) and (ii) together in the first two steps and part (iii) in the third step.

{\it Step 1.} 
Thanks to $a < \overline{x}_p$, \eqref{Gamma} and \eqref{about_x_upper}, we have 
\begin{align} \label{l_end_points}
J(a;a) = f_p(a) > 0 \quad \textrm{and} \quad J( \overline{x}_p; a) 
= - \lambda \Gamma (a; \overline{x}_p) < 0.
\end{align} This together with the continuity of $l \mapsto J(l; a)$ shows that there exists at least one $l 
\in (a, \overline{x}_p)$ such that $J(l;a) 
= 0$.

{\it Step 2.}  
By taking the partial derivative of \eqref{Gamma},  
for all $a < l$, which gives
$\frac \partial {\partial l} \Gamma(a;l)  
= f_p(l)  W^{(q+\lambda)}(l-a)$ 
and using \eqref{Z_Phi_der}, we obtain
\begin{align*} 
\begin{split}
\frac \partial {\partial l}J(l;a)
&= f_p'(l) \, Z^{(q+\lambda)}(l-a;\Phi(q)) + f_p(l) \, Z^{(q+\lambda)\prime}(l-a;\Phi(q)) 
- \lambda \frac \partial {\partial l} \Gamma(a;l) 
= h_p(l) Z^{(q+\lambda)}(l-a; \Phi(q) ).
\end{split}
\end{align*}
Using \eqref{x_p_and_h_p} and the facts that $Z^{(q+\lambda)}(\cdot; \Phi(q) )$ is uniformly positive and $h_p(\cdot)$ is continuous, we have
\begin{align} \label{l_and_x_p}
\frac \partial {\partial l} J(l;a)\left\{ \begin{array}{ll} 
> 0, & l <  \underline{x}_p,  \\ 
< 0, & l >  \underline{x}_p. \end{array} \right.
 \end{align}
Combining this (when $\underline{x}_p = -\infty$, $J(\cdot;a)$ is monotonically decreasing on $(-\infty, \overline{x}_p]$) with \eqref{l_end_points}, the solution in {\it step 1} is unique and we denote it by $\tilde{l}(a)$. 
We also obtain the claims in \eqref{cond_l_optP} -- thus completing part (i) -- as well as the claim in part (ii).

{\it Step 3.} 
Combining part (ii) with \eqref{sign_matches}, we conclude that $\tilde{l}(a)$ is indeed the maximiser over $[a, \overline{x}_p]$. Thus, Lemma \ref{remark_domain_to_focus}.(ii) completes the proof.
\end{proof}




For the threshold pair $(a,\tilde{l}(a))$, we get in light of condition $\mathbf{C}_l$ from \eqref{cond_lP} and the expression \eqref{v_p_other_form} of $v_p$, that 
\begin{align}
v_p(x;a,\tilde{l}(a)) =  J(\tilde{l}(a);x), \quad x \geq a, \label{v_p_l}
\end{align}
where in particular, recalling once again the condition $\mathbf{C}_l$, it is straightforward to confirm that $v_p(a;a,\tilde{l}(a)) =  J(\tilde{l}(a);a) = 0$. 
This expression will be useful both in the proof of the forthcoming result as well as later for strengthening the results in Section \ref{section_optimality_general}.

Similar to Lemma \ref{aincr}, we now present the continuity and monotonicity of the threshold $\tilde{l}(a)$ with respect to the arbitrarily chosen -- until this stage of analysis -- threshold 
$a \in (-\infty, \ol x_p)$, according to Lemma \ref{lem:exist_l}.
 
\begin{lemma}
\label{lincr}
The function $a \mapsto \tilde{l}(a)$, which is defined in Lemma \ref{lem:exist_l}.(i), is continuous and strictly increasing in $a$ on $(-\infty, \overline{x}_p)$. 
\end{lemma}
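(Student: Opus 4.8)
The plan is to handle continuity and strict monotonicity separately, mirroring the proof of Lemma~\ref{aincr}, but with a genuinely new ingredient for the monotonicity part: unlike $\partial I/\partial l$ there, the derivative $\partial J/\partial a$ is not of one sign on all of $\{l>a\}$ (it is negative for $l$ close to $a$), so the argument must be localised at the root $\tilde l(a)$. For continuity, I would copy the contradiction argument of Lemma~\ref{aincr} verbatim: if $a\mapsto\tilde l(a)$ were discontinuous at some $a^\dagger\in(-\infty,\overline x_p)$, sequences $a_n^\pm\to a^\dagger$ with $\lim_n\tilde l(a_n^-)\neq\lim_n\tilde l(a_n^+)$ together with the joint continuity of $(l,a)\mapsto J(l;a)$ would produce, on passing to the limit in $J(\tilde l(a_n^\pm);a_n^\pm)=0$, two distinct roots of $J(\cdot;a^\dagger)=0$, contradicting the uniqueness established in Lemma~\ref{lem:exist_l}.(i).

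For monotonicity, I would first note that $\tilde l(a)>a$ by \eqref{cond_l_optP}, so $J$ is $C^1$ in a neighbourhood of $(\tilde l(a),a)$ (the only term needing care is $\Gamma$, differentiated in $a$ through the integral form in \eqref{Gamma} as $\partial_a\Gamma(a;l)=-f_p(l)W^{(q+\lambda)}(l-a)+\int_0^{l-a}f_p'(u+a)W^{(q+\lambda)}(u)\du$, which is continuous), and that $\partial_l J(\tilde l(a);a)=h_p(\tilde l(a))\,Z^{(q+\lambda)}(\tilde l(a)-a;\Phi(q))<0$ by Step~2 of the proof of Lemma~\ref{lem:exist_l} together with $h_p(\tilde l(a))<0$ from \eqref{cond_l_optP}. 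Hence the implicit function theorem applies and gives $\tilde l\in C^1$ with $\tilde l'(a)=-\partial_a J(\tilde l(a);a)/\partial_l J(\tilde l(a);a)$, so it only remains to show $\partial_a J(\tilde l(a);a)>0$. Differentiating \eqref{Ila} in $a$ (using \eqref{Z_Phi_der} and the formula for $\partial_a\Gamma$ above) the $W^{(q+\lambda)}(l-a)$ terms cancel; substituting the root identity $\Phi(q)f_p(\tilde l(a))Z^{(q+\lambda)}(\tilde l(a)-a;\Phi(q))=\lambda\Phi(q)\Gamma(a;\tilde l(a))$ then collapses the result to $\partial_a J(\tilde l(a);a)=-\lambda\int_a^{\tilde l(a)}h_p(y)\,W^{(q+\lambda)}(y-a)\,\diff y$, with $h_p$ as in \eqref{def_h_p_c}. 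Thus everything reduces to proving $\int_a^{\tilde l(a)}h_p(y)\,W^{(q+\lambda)}(y-a)\,\diff y<0$.

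This last estimate is the crux, and I would establish it by transferring the sign of a weighted integral. Integrating the identity $\partial_l J(l;a)=h_p(l)Z^{(q+\lambda)}(l-a;\Phi(q))$ over $[a,\tilde l(a)]$ and using $J(a;a)=f_p(a)$ from \eqref{I_a_a} and $J(\tilde l(a);a)=0$ gives $\int_a^{\tilde l(a)}h_p(y)Z^{(q+\lambda)}(y-a;\Phi(q))\,\diff y=-f_p(a)<0$ (with $f_p(a)>0$ since $a<\overline x_p$, cf.\ \eqref{about_x_upper}). By Lemma~\ref{W/Z}, $\rho(y):=W^{(q+\lambda)}(y-a)/Z^{(q+\lambda)}(y-a;\Phi(q))$ is increasing on $(a,\infty)$. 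If $\underline x_p\le a$ (in particular if $\underline x_p=-\infty$), then $h_p<0$ throughout $(a,\tilde l(a))$ by \eqref{x_p_and_h_p} and the claim is immediate. Otherwise $a<\underline x_p<\tilde l(a)$ by \eqref{cond_l_optP}, and since $h_p$ is positive on $(a,\underline x_p)$ and negative on $(\underline x_p,\tilde l(a))$, bounding $\rho(y)$ above by $\rho(\underline x_p)$ on the first piece and below by $\rho(\underline x_p)$ on the second (after multiplying by the respective positive/negative weight $h_p(y)Z^{(q+\lambda)}(y-a;\Phi(q))\diff y$) and adding yields $\int_a^{\tilde l(a)}h_p(y)W^{(q+\lambda)}(y-a)\diff y\le\rho(\underline x_p)\int_a^{\tilde l(a)}h_p(y)Z^{(q+\lambda)}(y-a;\Phi(q))\diff y=-\rho(\underline x_p)f_p(a)<0$, where $\rho(\underline x_p)>0$ because $\underline x_p-a>0$. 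Therefore $\partial_a J(\tilde l(a);a)>0$, hence $\tilde l'(a)>0$.

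The main obstacle is exactly this estimate: one has to observe that, at the root, $\partial_a J$ becomes the $W^{(q+\lambda)}$-weighted integral of $h_p$, that the corresponding $Z^{(q+\lambda)}$-weighted integral is pinned down by the boundary values $J(a;a)$ and $J(\tilde l(a);a)$, and that the passage from one weight to the other is governed by the monotonicity of $W^{(q+\lambda)}/Z^{(q+\lambda)}$ from Lemma~\ref{W/Z} combined with the single sign change of $h_p$ at $\underline x_p$. The remaining ingredients---continuity, the $C^1$-regularity needed to invoke the implicit function theorem, and the differentiation of \eqref{Ila}---are routine.
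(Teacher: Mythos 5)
Your proof is correct, and the continuity half coincides with the paper's argument verbatim; for the strict monotonicity, however, you take a genuinely different route. The paper never differentiates $J$ in $a$: assuming $\tilde{l}(a^\dagger+\delta)\le\tilde{l}(a^\dagger)$ for some small $\delta>0$, it uses the sign information \eqref{l_and_x_p} on $l\mapsto J(l;a^\dagger+\delta)$ beyond $\underline{x}_p$ (recalling $\tilde{l}(\cdot)>\underline{x}_p$ from \eqref{cond_l_optP}) to force $J(\tilde{l}(a^\dagger);a^\dagger+\delta)\le 0$, and contradicts this with the probabilistic identity \eqref{v_p_l}, which gives $J(\tilde{l}(a^\dagger);a^\dagger+\delta)=v_p(a^\dagger+\delta;a^\dagger,\tilde{l}(a^\dagger))>0$ since $\tilde{l}(a^\dagger)<\overline{x}_p$ makes the reward strictly positive on the positive-probability event that player $P$ stops first. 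You instead invoke the implicit function theorem, so the burden shifts to showing $\partial_a J(\tilde{l}(a);a)>0$; your computation is correct (the $W^{(q+\lambda)}(l-a)$ terms do cancel, and substituting the root identity collapses the derivative to $-\lambda\int_a^{\tilde{l}(a)}h_p(y)\,W^{(q+\lambda)}(y-a)\,{\rm d}y$), and the key estimate — pinning the $Z^{(q+\lambda)}$-weighted integral of $h_p$ at $-f_p(a)<0$ by integrating $\partial_l J(l;a)=h_p(l)Z^{(q+\lambda)}(l-a;\Phi(q))$ over $[a,\tilde{l}(a)]$ and then converting to the $W^{(q+\lambda)}$-weight via the monotone ratio of Lemma \ref{W/Z} together with the single sign change of $h_p$ at $\underline{x}_p$ — is sound, including the separate case $\underline{x}_p\le a$. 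What the paper's contradiction argument buys is brevity and minimal regularity (no joint differentiability of $J$, no smoothness of $\tilde{l}$ needed); what yours buys is more information, namely that $\tilde{l}$ is $C^1$ on $(-\infty,\overline{x}_p)$ with the explicit positive derivative $\tilde{l}'(a)=-\partial_a J(\tilde{l}(a);a)/\partial_l J(\tilde{l}(a);a)$, at the cost of the delicate weighted-integral estimate and the (minor, and correctly noted) verification that $J$ is jointly $C^1$ on $\{l>a\}$ so that the local IFT branch is identified with $\tilde{l}$ through root uniqueness.
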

\begin{proof}
To show the continuity, we argue by contradiction, assuming that it fails to be continuous at some $a^\dagger < \overline{x}_p$, so that there exist two sequences $(a_n^-)$ and $(a_n^+)$ converging to $a^\dagger$ 
with $\lim_{n \to \infty}\tilde{l}(a_n^-) \neq \lim_{n \to \infty}\tilde{l}(a_n^+)$.
Then, noting that $(l,a) \mapsto J(l;a)$ is continuous, we have
\[
J(\lim_{n \to \infty} \tilde{l}(a_n^-); a^\dagger) = \lim_{n \to \infty}J(\tilde{l}(a_n^-); a_n^-) = 0 =  \lim_{n \to \infty}J(\tilde{l}(a_n^+); a_n^+) =  J(\lim_{n \to \infty} \tilde{l}(a_n^+); a^\dagger ).
\]
This contradicts with the uniqueness of the root  $\tilde{l}(a^\dagger)$ of $J(\cdot; a^\dagger ) = 0$ established in 
Lemma  \ref{lem:exist_l}.



%

To show the monotonicity, 
we again argue by contradiction, assuming that there is $a^\dagger \hspace{-1mm}\in \hspace{-1mm}(-\infty, \ol x_p)$ and sufficiently small $\delta > 0$ , so that $a^\dagger  + \delta < \ol x_p$ and $\tilde{l}(a^\dagger  + \delta) \leq \tilde{l}(a^\dagger )$. 
Since $J(\tilde{l}(a^\dagger ); a^\dagger ) = J(\tilde{l}(a^\dagger + \delta); a^\dagger +\delta) = 0$, we have
\[
 J(\tilde{l}(a^\dagger ); a^\dagger ) - J(\tilde{l}(a^\dagger ); a^\dagger +\delta) = J(\tilde{l}(a^\dagger + \delta); a^\dagger +\delta) - J(\tilde{l}(a^\dagger ); a^\dagger +\delta) \geq 0 
\]
where the last inequality holds by \eqref{l_and_x_p} (recall that $\tilde{l}(a) > \underline{x}_p$ for all $a \in [a^\dagger , a^\dagger  + \delta]$ by \eqref{cond_l_optP}) and by our assumption that $ \tilde{l}(a^\dagger  + \delta) \leq \tilde{l}(a^\dagger )$. Using once again the fact that $J(\tilde{l}(a^\dagger ); a^\dagger )=0$, we obtain
\[
J(\tilde{l}(a^\dagger ); a^\dagger +\delta) \leq 0.
\]
This contradicts with the fact that \eqref{v_p_l} is strictly positive 
given that $\tilde{l}(a^\dagger) < \overline{x}_p$.  
This completes the proof. 
\end{proof}

%
%


\subsection{Construction of Nash equilibrium.}

In the previous section, we established that, 
for any threshold $l$ chosen by player $P$ from an appropriate domain, player $C$ chooses a {\it unique best response $\tilde{a}(l)$} such that \eqref{cond_aC} holds (cf.\ Lemma \ref{lem:exist_a}), i.e. $I(\tilde{a}(l);l) = 0$. 
Moreover, 
for any threshold $a$ chosen by player $C$ from an appropriate domain, player $P$ chooses a {\it unique best response $\tilde{l}(a)$} such that \eqref{cond_lP} holds (cf.\ Lemma \ref{lem:exist_l}), i.e. $J(\tilde{l}(a);a) = 0$.  
The uniqueness of $\tilde{a}(\cdot)$ and $\tilde{l}(\cdot)$ and their continuity (cf.\ Lemmata \ref{aincr} and \ref{lincr}) will be used in the proofs of Propositions \ref{prop_equivalence_opt} and \ref{existNash}, which prove that there always exists a Nash equilibrium. 
Sample plots of the functions $a \mapsto I(a; \tilde{l}(a))$ and $l \mapsto J(l; \tilde{a}(l))$ are shown in Figure \ref{figure_J_I_2} in Section \ref{section_numerics}.

In this subsection, we aim at analysing a fixed point $(a^*,l^*)$ satisfying
\begin{align}
l^*=\tilde{l}(a^*) \quad \text{and} \quad a^*=\tilde{a}(l^*) \,, \label{l_a_saddle_pt}
\end{align}
which is equivalent to proving that the associated stopping times to these threshold strategies are the best responses to each other.
This condition can be shown to be equivalent to the equilibrium relation \eqref{nash_threshold}. 
 
\begin{proposition} \label{prop_equivalence_opt}
A pair of barriers $(a^*,l^*)$ satisfies \eqref{l_a_saddle_pt} if and only if $(a^*,l^*)$ satisfies
\eqref{nash_threshold}. 
\end{proposition}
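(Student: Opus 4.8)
The statement asserts the equivalence of the fixed-point system \eqref{l_a_saddle_pt} with the threshold Nash equilibrium system \eqref{nash_threshold}. I would prove the two implications separately, using the optimality characterisations of the best-response maps $\tilde a(\cdot)$ and $\tilde l(\cdot)$ established in Lemmata \ref{lem:exist_a} and \ref{lem:exist_l}, and their uniqueness. The key conceptual point is that, by construction, $\tilde a(l)$ is precisely the maximiser of $a\mapsto v_c(x;a,l)$ over all of $\R$ (Lemma \ref{lem:exist_a}.(II).(iii)) and $\tilde l(a)$ is precisely the maximiser of $l\mapsto v_p(x;a,l)$ over all of $\R$ (Lemma \ref{lem:exist_l}.(iii)), so the fixed-point relation and the Nash relation are really two ways of writing the same thing — modulo checking that the domains on which those lemmata apply are respected.

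\textbf{Forward direction ((\ref{l_a_saddle_pt}) $\Rightarrow$ (\ref{nash_threshold})).} Suppose $(a^*,l^*)$ satisfies $l^*=\tilde l(a^*)$ and $a^*=\tilde a(l^*)$. For the first equation of \eqref{nash_threshold}: I need $l^*$ to lie in the domain where Lemma \ref{lem:exist_a}.(II) applies, i.e. $l^*\geq\underline x_c$; this follows because $l^*=\tilde l(a^*)\in(\underline x_p\vee a^*,\bar x_p)$ by \eqref{cond_l_optP}, and one must then confirm $\underline x_p\vee a^*$ or rather the relevant lower bound indeed forces $l^*\geq\underline x_c$ — here I would invoke that $a^*=\tilde a(l^*)$ already requires $l^*\geq\underline x_c$ by Lemma \ref{lem:exist_a}.(I) (if $l^*<\underline x_c$ there is no root, contradicting $a^*=\tilde a(l^*)$). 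Then Lemma \ref{lem:exist_a}.(II).(iii) gives directly $v_c(x;a^*,l^*)=v_c(x;\tilde a(l^*),l^*)=\max_{a\in\R}v_c(x;a,l^*)$, which is the first line of \eqref{nash_threshold}. Symmetrically, $a^*=\tilde a(l^*)\in[\underline x_c,\bar x_c\wedge l^*)$ by Lemma \ref{lem:exist_a}.(II).(i), so in particular $a^*<\bar x_p$ and Lemma \ref{lem:exist_l} applies with this $a$; part (iii) then yields $v_p(x;a^*,l^*)=v_p(x;a^*,\tilde l(a^*))=\max_{l\in\R}v_p(x;a^*,l)$, the second line of \eqref{nash_threshold}.

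\textbf{Reverse direction ((\ref{nash_threshold}) $\Rightarrow$ (\ref{l_a_saddle_pt})).} Suppose $(a^*,l^*)$ satisfies \eqref{nash_threshold}. First, by Lemma \ref{remark_domain_to_focus}.(ii), $v_p(x;a^*,l^*)=\max_{l\in\R}v_p(x;a^*,l)=\max_{l\in[a^*,\bar x_p]}v_p(x;a^*,l)$; I would argue that this forces $a^*<\bar x_p$ (otherwise the value is identically $0$ and the interval degenerates, but one can check against Assumption \ref{Ass} that $a^*\geq\bar x_p$ is incompatible with $v_c(x;a^*,l^*)=\max_a v_c$ together with the constraint $a^*\leq x\wedge l^*\wedge\bar x_c<\bar x_p$ from \eqref{nash_threshold_new}; in fact Lemma \ref{remark_domain_to_focus}.(i) already pins $a^*\leq\bar x_c<\bar x_p$). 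Hence Lemma \ref{lem:exist_l} applies, its maximiser $\tilde l(a^*)$ is the unique maximiser of $v_p(x;a^*,\cdot)$ over $\R$, and by uniqueness $l^*=\tilde l(a^*)$. Similarly, from the first equation of \eqref{nash_threshold} together with Lemma \ref{remark_domain_to_focus}.(i), $a^*$ is the maximiser of $v_c(x;\cdot,l^*)$ over $\R$; I must check $l^*\geq\underline x_c$ so that a maximiser actually exists (by Lemma \ref{lem:exist_a}.(I), if $l^*<\underline x_c$ then $\frac{\partial}{\partial a}v_c(x;a,l^*)>0$ for all $a<l^*$ via $I(a;l^*)>0$ and Lemma \ref{lemma_derivatives}.(i), so $v_c(x;\cdot,l^*)$ has no interior maximiser on $(-\infty,l^*\wedge x)$ — one should rule this out, e.g. because then the unconstrained supremum would be approached only as $a\uparrow l^*\wedge x\wedge\bar x_c$ and one can compare with the value at a slightly larger admissible barrier, or more cleanly, because we already have $l^*=\tilde l(a^*)\geq\underline x_p\vee a^*$ and combine with $\underline a\leq\underline x_c$ from Lemma \ref{lemma_x_c_finite} plus the bound $a^*\geq\underline x_c$... this needs care). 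Once $l^*\geq\underline x_c$ is secured, Lemma \ref{lem:exist_a}.(II) gives the unique maximiser $\tilde a(l^*)$, and uniqueness of the maximiser forces $a^*=\tilde a(l^*)$.

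\textbf{Main obstacle.} The routine part is reading off the maximiser characterisations; the genuinely delicate step is the bookkeeping of \emph{domains} — verifying in each direction that $l^*\geq\underline x_c$ and $a^*<\bar x_p$ (equivalently $a^*\leq\bar x_c$) so that Lemmata \ref{lem:exist_a}.(II) and \ref{lem:exist_l} are actually applicable and the respective maximisers exist and are unique. In the reverse direction this is where one genuinely uses that both equations of \eqref{nash_threshold} hold simultaneously (not just one), together with the inequalities $\underline a\leq\underline x_c<\bar x_c<\bar x_p$ and \eqref{cond_l_optP}; I expect the cleanest route is to first extract $a^*\leq\bar x_c$ and $l^*\in[a^*,\bar x_p]$ from Lemma \ref{remark_domain_to_focus}, then bootstrap: apply Lemma \ref{lem:exist_l} to get $l^*=\tilde l(a^*)$, whence $l^*>\underline x_p\vee a^*$, and only then return to establish $l^*\geq\underline x_c$ — possibly by a direct contradiction argument showing $v_c(x;\cdot,l^*)$ cannot attain its supremum if $l^*<\underline x_c$.
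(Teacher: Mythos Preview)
Your forward direction is correct and matches the paper's proof (the paper simply cites Lemmata \ref{lem:exist_a}.(II).(iii) and \ref{lem:exist_l}.(iii) without spelling out the domain checks, but your checks are fine).

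For the reverse direction, your overall strategy is sound, but the paper organises it more efficiently and thereby dissolves what you call the ``main obstacle''. Rather than first invoking uniqueness of the maximiser and then separately trying to verify $l^*\geq\underline x_c$, the paper goes through the first-order conditions directly: since $(a^*,l^*)$ attains the maxima in \eqref{nash_threshold} and the partial derivatives in Lemma \ref{lemma_derivatives} are continuous, one has $I(a^*;l^*)=0$ and $J(l^*;a^*)=0$. The domain constraint then falls out for free: by Lemma \ref{lem:exist_a}.(I), $I(\cdot;l^*)=0$ has \emph{no} root when $l^*<\underline x_c$, so $I(a^*;l^*)=0$ forces $l^*\geq\underline x_c$. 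Uniqueness of the root in Lemma \ref{lem:exist_a}.(II) then gives $a^*=\tilde a(l^*)\in[\underline x_c,\bar x_c\wedge l^*)$; in particular $a^*<\bar x_c<\bar x_p$, so Lemma \ref{lem:exist_l} applies and $J(l^*;a^*)=0$ yields $l^*=\tilde l(a^*)$. Your contradiction sketch (``if $l^*<\underline x_c$ then $I(a;l^*)>0$ for all $a<l^*$, so no interior maximiser'') is essentially the contrapositive of this, but routed through the first-order condition it becomes a one-line observation rather than a bookkeeping exercise.
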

\begin{proof}
We prove the sufficiency and necessity separately in the following two steps.

{\it Step 1.} Suppose $(a^*,l^*)$  satisfies \eqref{l_a_saddle_pt}. Then, by Lemmata \ref{lem:exist_a}.(II).(iii) and \ref{lem:exist_l}.(iii), $(a^*,l^*)$ solves also  \eqref{nash_threshold}.

{\it Step 2.} Suppose $(a^*,l^*)$  satisfies  \eqref{nash_threshold}. 
Since the partial derivatives of $v_c$ and $v_p$ with respect to $a$ and $l$, respectively, were shown to be continuous in Lemma  \ref{lemma_derivatives},  $(a^*,l^*)$ must satisfy the first-order conditions in \eqref{cond_aC} and \eqref{cond_lP}, which are equivalent to $J(l^*;a^*) = I(a^*; l^*) = 0$. 
Now, by Lemma \ref{lem:exist_a}.(II), the equality $I(a^*; l^*) = 0$ requires 
that $l^* \geq \underline{x}_c$ and by the uniqueness of the root $\tilde{a}(\cdot)$ we must have $a^* = \tilde{a}(l^*) \in [\underline{x}_c, \overline{x}_c \wedge l^*)$. 
Furthermore, given that $a^* < \overline{x}_c < \overline{x}_p$,  Lemma \ref{lem:exist_l} implies that the equality $J(l^*;a^*) = 0$ guarantees (again by the uniqueness) that $l^* = \tilde{l} (a^*)$. Hence $(a^*,l^*)$ must satisfy the relationships in \eqref{l_a_saddle_pt}. 
\end{proof}

We now show that the root $l^*$ of $J(\cdot; \tilde{a}(\cdot)) = 0$ exists and together with the corresponding best response $a^*$ form a Nash equilibrium in the sense of \eqref{l_a_saddle_pt} (or equivalently \eqref{nash_threshold}, in light of Proposition \ref{prop_equivalence_opt}).

\begin{proposition} \label{existNash}
Recall the definition \eqref{I_a_a} of $J$ and the construction of $\tilde{a}$ in Lemma \ref{lem:exist_a}.(II).  
\begin{enumerate}
\item[(i)] There exists a root $l^*$ to the equation $J(\cdot; \tilde{a}(\cdot)) = 0$, such that $l^* \in (\ul x_c, \, \ol x_p)$. 
\item[(ii)] For any root $l^*$ in (i) and $a^* :=\tilde{a}(l^*) \in (\ul x_c, \, \ol x_c \wedge l^*)$, the pair $(a^*, l^*)$ satisfies \eqref{l_a_saddle_pt}. 
\end{enumerate}
Hence, there always exists a pair $(a^*, l^*)$ that forms a Nash equilibrium of threshold strategies in the sense of \eqref{nash_threshold}.
\end{proposition}

\begin{proof}
We prove the two parts separately. 

{\it Proof of part} (i). 
By Lemma \ref{lem:exist_a}.(II).(i) 
and \eqref{I_a_a} and because $\underline{x}_c < \overline{x}_p$ due to \eqref{x_bar_inequality} and \eqref{x_p_and_h_p}, we get
\begin{equation} \label{Iulxc}
J(\ul x_c; \tilde{a}(\ul x_c)) = J(\ul x_c; \ul x_c) = f_p(\ul x_c) > 0.
\end{equation} 
Furthermore, since $f_p(\ol x_p)=0$ and $\ol x_p \geq \ol x_c > \tilde{a}(\ol x_p)$  by \eqref{x_bar_inequality} and Lemma \ref{lem:exist_a}.(II).(i), \eqref{Ila} also yields that 
\begin{align} \label{Iolxp}
\begin{split}
J(\ol x_p; \tilde{a}(\ol x_p)) = - \lambda \Gamma(\tilde{a}(\overline{x}_p); \overline{x}_p)
= - \lambda \int_{ \tilde{a}(\ol x_p)}^{\ol x_p}f_p(u) \, W^{(q+\lambda)}(u- \tilde{a}(\ol x_p)) \du < 0 \,.
\end{split}
\end{align}
Using \eqref{Iulxc}--\eqref{Iolxp} together with the fact that the function $l \mapsto J(l; \tilde{a}(l))$ is continuous on $(\ul x_c, \,\ol x_p)$ (due to the continuity of  $l \mapsto \tilde{a}(l)$ from Lemma \ref{aincr}), we complete the proof of this part .

{\it Proof of part} (ii). 
Using $l^* > \ul x_c$ from part (i) to define $a^* := \tilde{a}(l^*)$, we conclude from Lemma \ref{lem:exist_a} that $a^* \in (\ul x_c, \, \ol x_c \wedge l^*)$.
Thanks to the uniqueness of $\tilde l(\cdot)$ 
from Lemma \ref{lem:exist_l}, we have  $l^* = \tilde{l}(a^*)$ if and  only if $J(l^*; a^*) = 0$. This is indeed true, due to the ways $l^*$ was derived and $a^* := \tilde{a}(l^*)$ was defined, 
which imply $J(l^*; \tilde{a}(l^*)) = 0$. 
%
%
\end{proof}

Collecting all aforementioned results together, we are in position to provide sufficient conditions for the uniqueness of the Nash equilibrium in threshold strategies. 
\begin{proposition} \label{unique}
The Nash equilibrium in the sense of \eqref{nash_threshold} constructed in Proposition \ref{existNash} (i.e.\ the pair of $(a^*,l^*)$ satisfying the two equalities in  \eqref{nash_threshold}), is unique, if and only if the equation 
$J(\cdot; \tilde{a}(\cdot)) = 0$ admits a unique solution in $(\ul x_c, \, \ol x_p)$. 
\end{proposition}
\begin{proof} 
We prove the desired result in the following two steps.

{\it Step 1.}
Combining the equivalence of \eqref{nash_threshold} and  \eqref{l_a_saddle_pt} proved in Proposition \ref{prop_equivalence_opt} with the Nash equilibrium existence result obtained in Proposition \ref{existNash}, we conclude that $J(l^*; \tilde{a}(l^*)) = 0$ (with $a^* := \tilde{a}(l^*))$ is a necessary condition for $(a^*,l^*)$ to be a Nash equilibrium amongst threshold-type strategies. 
Hence, the ones obtained in Proposition \ref{existNash} are the only Nash equilibria. 

{\it Step 2.}
In light of Step 1, we see that the uniqueness of Nash equilibrium in the sense of \eqref{nash_threshold}, is equivalent to the uniqueness of the solution $l^* \in (\ul x_c, \, \ol x_p)$ to the equation $J(\cdot; \tilde{a}(\cdot)) = 0$ (cf.\ Proposition \ref{existNash}.(i).  
\end{proof}

It is also apparent from Proposition \ref{unique}, that if the model and problem formulation are such that there exist multiple solutions to the equation 
$J(\cdot; \tilde{a}(\cdot)) = 0$, i.e. multiple $l^*$ satisfying Proposition \ref{existNash}.(i), we can construct multiple associated Nash equilibria $(a^*,l^*)$ as in Proposition \ref{existNash}.(ii).
In such a scenario, our subsequent analysis provides a way to construct the unique Nash equilibrium that is Pareto-superior to any other Nash equilibrium.
That is, by assuming that both players are rational and intelligent enough, we can discard other (Pareto-dominated) equilibria, because all agents are strictly better-off if they switch to this unique Pareto-superior equilibrium pair of strategies.
This can be seen as an alternative way of achieving a version of ``uniqueness'' even in this case of potential multiple Nash equilibria.

We show in the following proposition, using also the monotonicity of $\tilde{a}(\cdot)$ and $\tilde{l}(\cdot)$ (cf.\ Lemmata \ref{aincr} and \ref{lincr}), that by choosing the smallest (threshold) root in Proposition \ref{existNash}.(i), we can construct  the unique Nash equilibrium that is Pareto-superior to any other Nash equilibrium.

\begin{proposition} \label{prop_Pareto}
Define the thresholds 
\begin{equation} \label{3l*}
l^*_{min} := \min \{ l \in (\ul x_c, \, \ol x_p) \,:\, J(l; \tilde{a}(l)) = 0 \}
\qquad \text{and} \qquad
a^*_{min} := \tilde{a}(l^*_{min}).
\end{equation}
The pair $(a^*_{min},l^*_{min})$ is the unique Nash equilibrium that is Pareto-superior to any other Nash equilibrium $(a^*,l^*)$ satisfying \eqref{l_a_saddle_pt} (or equivalently \eqref{nash_threshold}). In other words,
\begin{align*}
v_i(x;a^*,l^*) \leq v_i(x;a^*_{min},l^*_{min}) , \qquad \text{for both  $i \in\{c,p\}$ and all $x\in\R$}.
\end{align*}
In particular, if $x > a^*_{min}$, then the above inequality is strict.
\end{proposition}
\begin{proof}
We prove the desired claims in the following four steps.

{\it Step 1.} 
Firstly, we note that $l^*_{min}$ is the minimum root
in Proposition \ref{existNash}.(i), thus the pair $(a^*_{min},l^*_{min})$ is well-defined.
Then, it follows by Proposition \ref{existNash}.(ii) that $(a^*_{min},l^*_{min})$ is a Nash equilibrium in the sense of \eqref{l_a_saddle_pt} and \eqref{nash_threshold}.

{\it Step 2.} 
Suppose there exists $(a^*_o,l^*_o)$ satisfying  \eqref{l_a_saddle_pt}, i.e.\ $l^*_o = \tilde{l}(a^*_o)$ and $a^*_o = \tilde{a}(l^*_o)$), different from $(a^*_{min},l^*_{min})$.  
This implies that $J(l^*_o; \tilde{a}(l^*_o)) = 0$, 
while $a^*_o = \tilde{a}(l^*_o)$ implies 
$I(a^*_o; l^*_o) = 0$, hence we know from Lemma \ref{lem:exist_a}.(II).(i) that $l_o^* > \underline{x}_c$. 
Due to Proposition \ref{prop_equivalence_opt} and the equivalence of 
\eqref{nash_threshold} and \eqref{nash_threshold_new}, we must also have $l_o^* < \overline{x}_p$. 
Combining these with the definition of $l^*_{min}$ in \eqref{3l*} 
and the fact that $\tilde{a}(\cdot)$ is unique due to Lemma \ref{lem:exist_a}.(II), we conclude that   $l^*_{min} < l^*_o$.
%
%
Using this together with the monotonicity of the function $\tilde{a}(\cdot)$ in Lemma \ref{aincr}, we get that
$$ a_o^* = \tilde{a}(l_o^*) > \tilde{a}(l^*_{min}) = a^*_{min} \,.$$

{\it Step 3.}
It is straightforward to see that $l^*_{min} < l_o^*$ (thus $T^-_{l^*_o} \leq T^-_{l^*_{min}}$ a.s.) implies that 
$\{\tau_{a^*_o}^- <T^-_{l_o^*}\} \subseteq \{\tau_{a^*_o}^- < T^-_{l^*_{min}}\}$ a.s.,
which implies for $x\in\R$ (recalling that $a^*_o < \bar{x}_c$ from Lemma \ref{lem:exist_a}.(II), thus $f_c(X_{\tau_{a^*_o}^-}) > 0$ a.s. by \eqref{about_x_upper}), that
\begin{align*}
v_c(x;a^*_o,l^*_o) = \E_x \Big[  e^{-q \tau_{a^*_o}^-}f_c(X_{\tau_{a^*_o}^-}) 1_{\{\tau_{a^*_o}^-<T_{l^*_o}^-\}} \Big] \leq  \E_x \Big[  e^{-q \tau_{a^*_o}^-}f_c(X_{\tau_{a^*_o}^-}) 1_{\{\tau_{a^*_o}^-<T_{l^*_{min}}^-\}} \Big] \leq v_c(x;a^*_{min},l^*_{min}),
\end{align*}
where the last inequality holds due to  $a^*_{min}$ being the best response to $l^*_{min}$ and is strict when $x > a^*_{min}$.
%
%
%
%

{\it Step 4.} Similarly, $a^*_{min} < a_o^*$ (thus $\tau_{a^*_o}^- \leq  \tau_{a^*_{min}}^-$ a.s.) implies that $\{T_{l_o^*}^- < \tau_{a_o^*}^- \} \subseteq \{T^-_{l_o^*}< \tau_{a^*_{min}}^- \}$ a.s., 
hence for $x\in\R$ (recalling that $l^*_o < \bar{x}_p$ from Lemma \ref{lem:exist_l}.(i), thus $f_p(X_{T^-_{l_o^*}}) > 0$ a.s. by \eqref{about_x_upper}), we obtain 
$$
v_p(x;a^*_o,l^*_o) =  \E_x \Big[e^{-q T_{l_o^*}^-} f_p(X_{T^-_{l_o^*}}) 1_{\{T_{l_o^*}^- < \tau_{a_o^*}^- \}}\Big] \leq
\E_x \Big[e^{-q T_{l_o^*}^-} f_p(X_{T^-_{l_o^*}}) 1_{\{T_{l_o^*}^- < \tau_{a^*_{min}}^- \}}\Big] \leq  v_p(x;a^*_{min},l^*_{min}),
$$
where the last inequality holds due to  $l^*_{min}$  being the best response to $a^*_{min}$ and is strict when $x > a^*_{min}$.
%
%
%
\end{proof}

\subsection{Properties of $v_c(\cdot;a^*,l^*)$ and $v_p(\cdot;a^*,l^*)$} 
Before concluding this section, we obtain some properties of $v_c(\cdot;a^*,l^*)$ and $v_p(\cdot;a^*,l^*)$ for $(a^*, l^*)$ satisfying \eqref{l_a_saddle_pt}.

Observe that, by \eqref{cond_lP}, \eqref{Ila} and due to $J(l^*;a^*) = 0$, we obtain 
\begin{align*} 
\begin{split}
\frac {\lambda \Gamma(a^*;l^*) } {Z^{(q+\lambda)}(l^*-a^*;\Phi(q)) }
&= f_p(l^*) = v_p(l^*;a^*,l^*).
\end{split}
\end{align*}
Substituting this in \eqref{vf_1P} for $v_p(x;a^*,l^*)$, we get  
\begin{align}
v_p(x;a^*,l^*)=  Z^{(q+\lambda)}(l^*-x;\Phi(q))v_p(l^*;a^*,l^*)- \lambda  \Gamma(x;l^*), \quad \text{for } x \geq a^*. \label{v_p_opt}
 \end{align}
This alternative expression will be used both for proving the next result (see Appendix \ref{proof_smooth}) as well as in Section \ref{section_optimality_general}.

\begin{proposition}[Smoothness \& Convexity] \label{smooth}
Recall $v_c(\cdot;a^*,l^*)$, $v_p(\cdot;a^*,l^*)$ defined in \eqref{vf_0PC} and satisfying \eqref{nash_threshold}. \\
 {\rm (I)} Regarding the function $v_c(\cdot;a^*,l^*)$, we have the following:
\begin{itemize}
	\item[(i)] $v_c(\cdot;a^*,l^*)$   is continuous on $\R$ and $C^2$ (resp.,\ $C^1$) on 
	$(a^*, \infty) \backslash \{ l^*\}$ when $X$ is of unbounded (resp.,\ bounded) variation.
	\item[(ii)] $v_c(\cdot;a^*,l^*)$ is continuously differentiable at $a^*$.
	\item[(iii)] $v_c(\cdot;a^*,l^*)$ is continuously differentiable at $l^*$, only when $X$ is of unbounded variation.
	\item[(iv)] $v_c(\cdot;a^*,l^*)$ is decreasing and convex on $(a^*, \infty)$.
\end{itemize}
{\rm (II)} Regarding the function $v_p(\cdot;a^*,l^*)$, we have the following:
\begin{itemize}
\item[(i)] $v_p(\cdot;a^*,l^*)$ is continuous on $\R$ and twice continuously differentiable on $\mathbb{R}\backslash\{a^*, l^*\}$. 
	\item[(ii)] $v_p(\cdot;a^*,l^*)$ is continuously differentiable at $l^*$.
	\item[(iii)] $v_p(\cdot;a^*,l^*)$ is twice continuously differentiable at $l^*$, only when $X$ is of unbounded variation.
\end{itemize}
\end{proposition}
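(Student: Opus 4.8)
The plan is to establish the smoothness and convexity claims by working directly from the explicit formulae for $v_c(\cdot;a^*,l^*)$ and $v_p(\cdot;a^*,l^*)$ obtained in Propositions \ref{limit_cont_SP} and \ref{limit_math_W}, together with the first-order conditions $I(a^*;l^*)=0$ and $J(l^*;a^*)=0$ that characterise the equilibrium pair. Recall that $v_c(x;a^*,l^*)$ equals $f_c(x)$ on $(-\infty,a^*]$ and equals $f_c(a^*)Z^{(q+\lambda)}(l^*-x;\Phi(q))/Z^{(q+\lambda)}(l^*-a^*;\Phi(q))$ on $(a^*,\infty)$, while $v_p(x;a^*,l^*)$ is $0$ on $(-\infty,a^*]$ and given by \eqref{v_p_opt} on $(a^*,\infty)$. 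The regularity of each piece away from the junction points $a^*$ and $l^*$ follows immediately from the corresponding regularity of the scale functions: by Remark \ref{remark_scale_function_properties}.(i), $W^{(q+\lambda)}\in C^1(\R\setminus\{0\})$ when $X$ is of unbounded variation, and $Z^{(q+\lambda)}(\cdot;\Phi(q))$ inherits one extra degree of smoothness via its defining integral \eqref{ZqlPq} and \eqref{Z_Phi_der}; in the bounded variation case $W^{(q+\lambda)}$ is only continuous at $0^+$ (with value $\mu^{-1}$), which is precisely the source of the weaker $C^1$-versus-$C^2$ dichotomy and the failure of higher-order fit at $l^*$. This handles part (I)(i) and part (II)(i).

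For the $C^1$-fit at $a^*$ (parts (I)(ii) and (II)(ii)-at-$a^*$), I would compute the left and right derivatives at $x=a^*$ and show they agree. For $v_c$, the left derivative is $f_c'(a^*)$; differentiating the right-hand expression in $x$ and evaluating at $x=a^*$ gives $-f_c(a^*)Z^{(q+\lambda)\prime}(l^*-a^*;\Phi(q))/Z^{(q+\lambda)}(l^*-a^*;\Phi(q))$, which upon using \eqref{Z_Phi_der} becomes $-(\Phi(q)f_c(a^*)+\lambda W^{(q+\lambda)}(l^*-a^*)v_c(l^*;a^*,l^*))$; by the definition \eqref{Jal} of $I$ this equals $f_c'(a^*)-I(a^*;l^*)=f_c'(a^*)$ since $I(a^*;l^*)=0$. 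An entirely parallel computation for $v_p$ uses Lemma \ref{lemma_derivatives}-type manipulations and $J(l^*;a^*)=0$; here the left derivative at $a^*$ is $0$ and one checks the right derivative vanishes too. For the $C^1$-fit at $l^*$ (parts (I)(iii), (II)(ii)): at $l^*$ the potential discontinuity in the derivative comes from the jump of $W^{(q+\lambda)\prime}$, i.e. from the term $\lambda W^{(q+\lambda)}(0)$; this term vanishes exactly when $X$ is of unbounded variation (Remark \ref{remark_scale_function_properties}.(ii)), giving the stated dichotomy, and for $v_p$ the $C^1$-fit at $l^*$ holds in all cases because the relevant coefficient carries an extra factor that kills the discontinuity — one must check $\partial_l\Gamma(x;l)|_{l=l^*}$ and the $Z$-derivative combine correctly. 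The $C^2$-fit at $l^*$ for $v_p$ in the unbounded case (part (II)(iii)) is then obtained by differentiating once more and again invoking $W^{(q+\lambda)}(0)=0$.

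For the monotonicity and convexity of $v_c(\cdot;a^*,l^*)$ on $(a^*,\infty)$ (part (I)(iv)), I would argue as follows. On $(a^*,\infty)$ we have $v_c(x;a^*,l^*)=c\,Z^{(q+\lambda)}(l^*-x;\Phi(q))$ with $c=f_c(a^*)/Z^{(q+\lambda)}(l^*-a^*;\Phi(q))>0$ (positivity since $a^*<\overline{x}_c$). Monotonicity: $\partial_xv_c = -c\,Z^{(q+\lambda)\prime}(l^*-x;\Phi(q))<0$ because $Z^{(q+\lambda)\prime}(\cdot;\Phi(q))=\Phi(q)Z^{(q+\lambda)}(\cdot;\Phi(q))+\lambda W^{(q+\lambda)}(\cdot)>0$ on $(0,\infty)$ and equals $\Phi(q)>0$ on $(-\infty,0)$. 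Convexity: $\partial_{xx}v_c = c\,Z^{(q+\lambda)\prime\prime}(l^*-x;\Phi(q))$, and one shows $Z^{(q+\lambda)\prime\prime}(\cdot;\Phi(q))\ge 0$ — on $(-\infty,0)$ it is $\Phi(q)^2 Z^{(q+\lambda)}>0$, and on $(0,\infty)$ differentiating \eqref{Z_Phi_der} gives $\Phi(q)Z^{(q+\lambda)\prime}+\lambda W^{(q+\lambda)\prime}$, which is nonnegative once one knows $W^{(q+\lambda)\prime}\ge 0$ (a standard scale-function fact, true for $q+\lambda>0$ under our non-subordinator assumption). At the junction $l^*$ one patches the one-sided second derivatives using the already-established $C^1$ (or $C^2$) fit, and at $a^*$ one uses the $C^1$ fit together with concavity of $f_c$ to conclude that the one-sided second derivatives of $v_c$ are ordered correctly, so $v_c(\cdot;a^*,l^*)$ is globally convex on $(a^*,\infty)$.

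The main obstacle I anticipate is the careful bookkeeping of one-sided derivatives at the two junction points in the bounded-variation regime, where $W^{(q+\lambda)}$ has a jump in its value at $0^+$ and $W^{(q+\lambda)\prime}$ need only exist a.e.; one must verify that exactly the combinations appearing in the derivatives of $v_c$ and $v_p$ are the ones where the boundary term $\lambda W^{(q+\lambda)}(0)f_i$ either cancels (giving the positive results) or survives (explaining why $C^1$-fit at $l^*$ fails for $v_c$ and $C^2$-fit fails for $v_p$). A secondary technical point is justifying $W^{(q+\lambda)\prime}\ge 0$ and the sign of $Z^{(q+\lambda)\prime\prime}$ with the right level of care (these are classical but should be cited, e.g. from \cite{K} or \cite{KKR}). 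None of this is conceptually deep, but it is where the proof must be written with precision, which is why the bulk of it is deferred to Appendix \ref{proof_smooth}.
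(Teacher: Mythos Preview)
Your approach is essentially the paper's: compute derivatives of the explicit formulae \eqref{vf_1C} and \eqref{v_p_opt}, invoke the scale-function regularity from Remark \ref{remark_scale_function_properties}, and use $I(a^*;l^*)=0$ for the $C^1$-fit of $v_c$ at $a^*$. Two points need correction.

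First, you claim a $C^1$-fit for $v_p$ at $a^*$ (``the right derivative vanishes too''). This is neither asserted in the proposition nor true in general: from \eqref{der_v_p_a_l} one has
\[
v_p'(a^*+;a^*,l^*)=-\Phi(q)Z^{(q+\lambda)}(l^*-a^*;\Phi(q))\,f_p(l^*)-\lambda\int_0^{l^*-a^*}f_p'(u+a^*)W^{(q+\lambda)}(u)\,\diff u,
\]
which does not vanish (condition $\mathbf{C}_l$ plays no role here). The proposition deliberately excludes $a^*$ from the $C^1$ claims for $v_p$; drop this step.

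Second, in your convexity argument for $v_c$ at $l^*$ you invoke ``the already-established $C^1$ fit'', but by (I)(iii) there is \emph{no} $C^1$-fit at $l^*$ when $X$ is of bounded variation. Convexity still holds because the derivative jumps in the right direction: from \eqref{der_v_c_a_l},
\[
v_c'(l^*-;a^*,l^*)=v_c'(l^*+;a^*,l^*)-\lambda W^{(q+\lambda)}(0)\,v_c(l^*;a^*,l^*)\le v_c'(l^*+;a^*,l^*),
\]
which (together with $v_c''>0$ away from $l^*$) gives convexity across $l^*$. This is exactly how the paper closes the argument. Your remark about patching at $a^*$ is unnecessary since convexity is only claimed on $(a^*,\infty)$.
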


\section{Optimality over all stopping times} \label{section_optimality_general}

In Proposition \ref{existNash}, we showed the existence of the solutions $(a^*, l^*)$ to \eqref{cond_aC} and \eqref{cond_lP} and that $(a^*, l^*)$ is a Nash equilibrium in the sense of \eqref{nash_threshold} where strategies are restricted to be of threshold-type. In Proposition \ref{prop_Pareto}, we showed that by choosing $(a^*_{min},l^*_{min})$ as in  \eqref{3l*}, we can construct a Nash equilibrium that is Pareto-superior to other $(a^*, l^*)$ satisfying  \eqref{l_a_saddle_pt} or equivalently \eqref{nash_threshold} (cf.\ Proposition \ref{prop_equivalence_opt}).


In this section, we strengthen the results by considering a larger set of admissible strategies.
%
%
%
%
%
For $x \in \R$, we aim at showing that the pair of strategies $(\tau_{a^*}^-,T_{l^*}^-)$ is a Nash equilibrium as in Section \ref{problem}, when the strategy sets of both players are unrestricted (most general ones possible); this is formally stated as 
\begin{align} \label{NEsys} 
\begin{cases} 
V_c( \tau_{a^*}^-,T_{l^*}^-; x) \geq V_c( \tau,T_{l^*}^-; x),  \quad \forall\; \tau  \in \mathcal{T}_c, \\
V_p(\tau_{a^*}^-, T_{l^*}^-; x) \geq V_p( \tau_{a^*}^-,\sigma; x), \quad \forall\; \sigma \in \mathcal{T}_p.
 \end{cases} 
\end{align}
In view of the definitions of the values $V_c$ and $V_p$ in Section \ref{problem} and the definitions \eqref{vf_0PC}, we see that 
$V_c(\tau_{a^*}^-,T_{l^*}^-; x) \equiv v_c(x; a^*, l^*)$ and 
$V_p(\tau_{a^*}^-,T_{l^*}^-; x) \equiv v_p(x; a^*, l^*)$.
Hence, proving the coupled system of inequalities in \eqref{NEsys} is equivalent to proving the coupled system of equalities, for all $x \in \R$, given by
\begin{align} \label{opt_for_CP}
\begin{cases}
v_c(x; a^*, l^*) = \sup_{\tau \in  \mathcal{T}_c}  V_c(\tau, T_{l^*}^-; x), \\
v_p(x; a^*, l^*) = \sup_{\sigma \in  \mathcal{T}_p} V_p( \tau_{a^*}^-,\sigma; x).
\end{cases} 
\end{align}
Notice that the optimal stopping problems on the right-hand sides of \eqref{opt_for_CP} form a coupled system, where the coupling comes from their random time horizons $T_{l^*}^-$ and $\tau_{a^*}^-$ (recall their definitions in \eqref{hittingtimes}), which are controlled by their opponents. 
To be more precise, the threshold $l^*$ involved in the random time horizon $T_{l^*}^-$ for player $C$, is chosen by player $P$ according to $l^* = \tilde{l}(a^*)$ (cf.\ \eqref{l_a_saddle_pt}) while aiming at solving their own optimal stopping problem with random time horizon $\tau_{a^*}^-$, and vice versa, thus creating this closed-loop coupling.  
This dependence in the optimal stopping problems on the right-hand sides of \eqref{opt_for_CP} can be formally expressed as
\begin{align} \label{opt_for_C_}
\sup_{\tau \in  \mathcal{T}_c}  V_c(\tau, T_{l^*}^-; x) 
&= \sup_{\tau \in  \mathcal{T}_c}  \mathbb{E}_x \Big[ e^{-q \tau} f_c(X_{\tau}) 1_{\{ \tau < T_{l^*}^- \}} \Big], 
\;\qquad \text{where } l^* = \tilde{l}(a^*), 
\\ \label{opt_for_P_}
\sup_{\sigma \in  \mathcal{T}_p} V_p( \tau_{a^*}^-,\sigma; x)
&= \sup_{\sigma \in  \mathcal{T}_p} \mathbb{E}_x \Big[ e^{-q \sigma} f_p(X_{\sigma}) 1_{\{ \sigma < \tau_{a^*}^-\}} \Big], 
\qquad \text{where } a^* = \tilde{a}(l^*),
\end{align}
with the mappings $\tilde{a}$ and $\tilde{l}$ given by Lemma \ref{lem:exist_a}.(II) and Lemma \ref{lem:exist_l}, respectively.
The coupled system of equalities in \eqref{opt_for_CP} can be therefore proved by showing that the solution to the system \eqref{opt_for_C_}--\eqref{opt_for_P_} is given by the pair of value functions $v_c(x; a^*, l^*)$ and $v_p(x; a^*, l^*)$, respectively, when the pair $(a^*, l^*)$ is obtained in Proposition \ref{existNash}. 
 
A standard methodology employed in optimal stopping theory is the use of variational inequalities to verify the optimality of candidate strategies and value functions. 
To this end, we define the infinitesimal generator $\mathcal{L}$ acting on sufficiently smooth functions $w(\cdot)$ as follows:
\begin{align*}
\mathcal{L} w(x) := \frac{\nu^2}{2} w''(x) - \gamma w'(x) + \int_{(0, \infty)}[w(x+z) - w(x) - w'(x) z\mathbf{1}_{\{z<1\}}]\Pi(\mathrm{d}z).
\end{align*}

\begin{remark}
A standard analytical verification theorem for Player $C$, i.e. the first part of \eqref{opt_for_CP}, would be given (proof is omitted) by verifying the following conditions, for $(a^*, l^*)$ obtained in Proposition \ref{existNash}:
\begin{itemize}
\item[(i)] $(\mathcal{L} - q) v_c(x;a^*,l^*)=0$, \,for $x > l^*$ \,,
\item[(ii)] $(\mathcal{L} - q) v_c(x;a^*, l^*) - \lambda v_c(x;a^*, l^*) = 0$, \,for $x \in (a^*, l^*)$ \,,
\item[(iii)] 
$(\mathcal{L} - q) v_c(x;a^*, l^*) - \lambda v_c(x;a^*, l^*) \leq 0$, \,for $x<a^*$ \,,
\item[(iv)] $v_c(x;a^*, l^*) \geq f_c(x)$, \,for $x > a^*$ \,,
\item[(v)] $v_c(x;a^*, l^*) = f_c(x)$, \,for $x \leq a^*$ \,.
\end{itemize}
However, in the general setup of reward functions $f_c(\cdot)$ (see Assumption \ref{Ass}) and L\'evy models considered in this paper, this  method is non-feasible. 
To be more precise, condition (iii)\footnote{Such a condition was part of Assumption 2.6 in \cite{DeAFM} and was used to prove the existence of Nash equilibrium in threshold strategies for diffusion models (i.e. with jump measure $\Pi \equiv 0$) in a game of symmetric continuous exercise opportunities (i.e. two players of type $C$).} required for verification, is 
equivalent to
\begin{multline*}
\frac{\nu^2}{2} f_c''(x) - \gamma f_c'(x) + \int_{(0, \infty)} \hspace{-0pt}\big\{ v_c(x+z;a^*,l^*) - f_c(x) - f_c'(x) z \mathbf{1}_{\{z<1\}} \big\} \Pi(\mathrm{d}z) - (q+\lambda) f_c(x) \leq 0, \quad \forall x < a^*. 
\end{multline*}
Due to the presence of jumps in our model, verifying the above condition is in general non-feasible -- additional assumptions are required on the model parameters $\gamma$, $\nu$, $q$, the frequency of periodic exercise opportunities $\lambda$, the jump measure $\Pi$ and reward functions $f_c$, which are also hard to verify themselves. 
\end{remark}

In order to maintain the original general setting of our paper, without relying on further assumptions, we propose an amalgamated methodology that will involve: 
(a) our already obtained results on Nash equilibria in threshold strategies (Propositions \ref{unique} and \ref{prop_Pareto}); 
(b) our results on the regularity of candidate value functions (Proposition \ref{smooth}); 
(c) a reformulation of problem \eqref{opt_for_C_} -- involved in the first part of the system \eqref{opt_for_CP} -- to one with stochastic path-dependent discounting; 
(d) the introduction of an {average problem approach} (developed in \cite{LZ}, \cite{RZ1} and \cite{Surya} for optimal stopping problems) to non-zero-sum games of optimal stopping with asymmetric exercise opportunities; 
(e) a combination of the above with the variational inequalities for problem \eqref{opt_for_P_} -- involved in the second part of the system \eqref{opt_for_CP} -- which provide sufficient conditions for optimality in the next result\footnote{A similar result was obtained by \cite{DW2002} in an infinite horizon optimal stopping problem for a continuous model -- instead, we deal with a random time horizon (controlled by the opponent) and L\'evy models with positive jumps -- we thus adapt the result accordingly.}. Its proof is deferred to Appendix \ref{verification_split_proof}. 

\begin{lemma}[Verification lemma for \eqref{opt_for_P_}] \label{verification_split}
{Recall that $v_p$ satisfies all properties proved in Proposition \ref{smooth}.(II) and additionally} suppose that
\begin{itemize}
\item[(i)] $(\mathcal{L} - q) v_p(x; a^*, l^*) =0$, \,for $x\geq l^*$ \,,
\item[(ii)] $(\mathcal{L} - q) v_p(x; a^*, l^*) - \lambda \, (v_p(x; a^*, l^*) - f_p(x)) = 0$, \,for $x \in (a^*,l^*]$ \,,
\item[(iii)] $v_p(x; a^*, l^*) \geq f_p(x)$, \,for $x \geq l^*$ \,,
\item[(iv)] $v_p(x; a^*, l^*) \leq f_p(x)$, \,for $x\in [a^*,l^*]$ \,,
\item[(v)] $v_p(x; a^*, l^*) = 0$, \,for $x \leq a^*$ \,.
\end{itemize}
Then, $v_p(x; a^*, l^*)$ is the value function of \eqref{opt_for_P_}. 
\end{lemma}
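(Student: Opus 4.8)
\textbf{Proof plan for Lemma \ref{verification_split}.}

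The plan is to take an arbitrary $\sigma \in \mathcal{T}_p$, write $\sigma = T^{(M)}$ for a $\mathbb{G}$--stopping time $M$, and compare $V_p(\tau_{a^*}^-, \sigma; x)$ against $v_p(x; a^*, l^*)$ via an optional-stopping/It\^o argument applied to the discounted process $e^{-q(t\wedge\tau_{a^*}^-)} v_p(X_{t\wedge\tau_{a^*}^-}; a^*, l^*)$. First I would invoke Proposition \ref{smooth}.(II), which guarantees that $v_p(\cdot;a^*,l^*)$ is $C^1$ on $\R$ and $C^2$ off the two points $\{a^*,l^*\}$, plus the boundedness noted just before the lemma, so that It\^o's formula for (possibly discontinuous) semimartingales applies and the jump/compensator terms assemble into the integro-differential operator $\mathcal{L}$. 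Stopping at $\tau_{a^*}^-$ freezes the process in the region $x\geq a^*$, on which $(\mathcal{L}-q)v_p$ is piecewise described by (i) on $[l^*,\infty)$ and by $(\mathcal{L}-q)v_p = \lambda(v_p - f_p)$ on $(a^*,l^*]$ from (ii). The key point is that these two identities are precisely engineered so that, after adding back the Poisson clock $N$ with rate $\lambda$, the process
\[
\Lambda_t := e^{-q(t\wedge\tau_{a^*}^-)} v_p\big(X_{t\wedge\tau_{a^*}^-}; a^*, l^*\big) - \int_0^{t\wedge\tau_{a^*}^-} e^{-qs}\,\lambda\,f_p(X_{s})\,\mathbf{1}_{\{X_s \le l^*\}}\,\diff s
\]
is a (local) martingale: on $\{X_s > l^*\}$ the drift of the discounted value vanishes by (i), while on $\{a^* < X_s \le l^*\}$ the drift equals $\lambda v_p(X_s) - \lambda f_p(X_s)$ by (ii), and the running integral subtracts exactly the $-\lambda f_p$ part, leaving a term that is absorbed by the $\lambda v_p$ compensator of the killed-at-rate-$\lambda$ representation. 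Concretely, one rewrites the expectation using the fact that $T_l^-$ is the first Poisson arrival with $X$ below the relevant level, i.e. conditioning on $N$, the value $v_p$ admits the stochastic representation $v_p(x;a^*,l^*) = \mathbb{E}_x[\,\cdot\,]$ over the Poisson-killed problem, and then the identities (i)--(ii) are the associated Feynman--Kac equation.

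Having established the martingale (or supermartingale, after a suitable truncation/localisation to handle integrability of the jumps) property up to $\tau_{a^*}^-$, I would then localise with a sequence of stopping times $\tau_n \uparrow \infty$, apply optional stopping at $\tau_n \wedge T^{(M)}$, and pass to the limit using $v_p \ge 0$, the boundedness of $v_p$, $q>0$ giving the exponential discount, and dominated/monotone convergence; the boundary behaviour $v_p(x;a^*,l^*)=0$ for $x\le a^*$ from (v) ensures the boundary term at $\tau_{a^*}^-$ contributes nothing on $\{\tau_{a^*}^- < \sigma\}$. This yields
\[
v_p(x;a^*,l^*) \ge \mathbb{E}_x\!\Big[e^{-q\sigma} v_p(X_{\sigma};a^*,l^*)\mathbf{1}_{\{\sigma < \tau_{a^*}^-\}}\Big] + \mathbb{E}_x\!\Big[\int_0^{\sigma \wedge \tau_{a^*}^-} e^{-qs}\lambda f_p(X_s)\mathbf{1}_{\{X_s\le l^*\}}\,\diff s\Big],
\]
and the crucial inequality steps are: on $\{\sigma < \tau_{a^*}^-\}$ we have $X_\sigma > a^*$, and since $\sigma$ is a Poisson time, either $X_\sigma > l^*$ — where (iii) gives $v_p(X_\sigma) \ge f_p(X_\sigma)$ — or $a^* < X_\sigma \le l^*$; in the latter sub-case the running-cost integral, when combined with the Poisson structure, accounts for exactly the event that player $P$ stops, and (iv) ($v_p \le f_p$ on $[a^*,l^*]$) is what makes the bound go the right way. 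Reassembling, one obtains $v_p(x;a^*,l^*) \ge \mathbb{E}_x[e^{-q\sigma} f_p(X_\sigma)\mathbf{1}_{\{\sigma<\tau_{a^*}^-\}}] = V_p(\tau_{a^*}^-,\sigma;x)$ for every $\sigma \in \mathcal{T}_p$; equality for $\sigma = T_{l^*}^-$ follows because then all the inequalities in (iii)--(iv) are used only at points where they are tight (on $\{X=l^*\}$, or where the stopping decision is forced), recovering Proposition \ref{limit_math_W}. Taking the supremum over $\sigma$ gives \eqref{opt_for_P}.

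The main obstacle I anticipate is handling the periodic (Poisson-time) nature of player $P$'s admissible strategies rigorously: a generic $\sigma \in \mathcal{T}_p$ is $T^{(M)}$ for an arbitrary $\mathbb{G}$--stopping time $M$, not simply the first Poisson time below a level, so the standard diffusion-type verification theorem does not apply verbatim. One must carefully interleave the continuous-time It\^o argument (up to $\tau_{a^*}^-$, driven by $\mathbb{F}$) with the discrete conditioning on the Poisson grid (driven by $\mathbb{G}$), and argue that bounding the value against $f_p$ at an \emph{arbitrary} Poisson arrival $T^{(n)}$ — using (iii) when above $l^*$ and the running-cost/(iv) mechanism when in $(a^*,l^*]$ — is legitimate for \emph{any} choice of which arrival to stop at. This is exactly why the excerpt flags that ``the verification lemma associated to this VI (esp.\ its proof) is also non-standard'', and I expect the bookkeeping that translates the continuous $\lambda f_p \mathbf{1}_{\{X\le l^*\}}$ running-cost term into a statement about discrete Poisson stopping decisions to be the delicate, non-routine core of the argument; the integrability/localisation to justify removing the local-martingale caveat (using $f_p$ bounded above on the relevant range, $v_p$ bounded, and spectral positivity to control upward jumps) is comparatively standard.
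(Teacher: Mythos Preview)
Your proposed process $\Lambda_t$ is \emph{not} a martingale: on $\{a^*<X_s\le l^*\}$ the drift of $e^{-qs}v_p(X_s)$ is $e^{-qs}\lambda(v_p-f_p)$ by (ii), and subtracting the running cost $\lambda f_p\,1_{\{X\le l^*\}}$ leaves a residual drift $e^{-qs}\lambda v_p(X_s)$, not zero. Your remark that this is ``absorbed by the $\lambda v_p$ compensator of the killed-at-rate-$\lambda$ representation'' is the crux, but you never set up that representation, and as written $\Lambda_t$ lives entirely in $\mathbb{F}$ with no Poisson ingredient to compensate. The second gap is the step you yourself flag as delicate: turning the continuous running integral $\int e^{-qs}\lambda f_p(X_s)1_{\{X_s\le l^*\}}\diff s$ into a bound on $\E_x[e^{-q\sigma}f_p(X_\sigma)1_{\{\sigma<\tau_{a^*}^-\}}]$ for an \emph{arbitrary} $\sigma=T^{(M)}$. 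You assert that (iii)--(iv) make this work, but no mechanism is given; without it the inequality you display does not imply $v_p(x)\ge V_p(\tau_{a^*}^-,\sigma;x)$.

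The paper avoids both issues by working with $(q+\lambda)$-discounting rather than $q$-discounting: applying It\^o to $e^{-(q+\lambda)t}v_p(X_t)$ and using (i)--(iv) yields the exact representation $v_p(x)=\E_x[e^{-q e_\lambda}\overline{w}_p(X_{e_\lambda})1_{\{e_\lambda<\tau_{a^*}^-\}}]$ with $\overline{w}_p:=\max\{f_p,v_p\}$ (this is where the residual $\lambda v_p$ goes). From this, the strong Markov property at each $T^{(n)}$ shows that the \emph{discrete-time} process $\{e^{-qT^{(n)}}\overline{w}_p(X_{T^{(n)}})1_{\{T^{(n)}<\tau_{a^*}^-\}}\}_{n\ge 0}$ is a supermartingale with respect to the enlarged filtration $\tilde{\mathcal{G}}_n=\mathcal{H}_{T^{(n)}}$. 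Since any $\sigma\in\mathcal{T}_p$ is a $\tilde{\mathbb{G}}$-stopping time, optional sampling on this \emph{discrete} supermartingale gives $V_p(\tau_{a^*}^-,\sigma;x)\le v_p(x)$ directly, with $\overline{w}_p\ge f_p$ providing the final comparison. The passage from continuous time to the Poisson grid is thus handled by the representation formula, not by a running-cost argument.
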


The main result of the paper is presented below. 

\begin{theorem} With $(a^*, l^*)$ obtained in Proposition \ref{existNash},
we have for all $x \in \R$, that the coupled system of equalities in \eqref{opt_for_CP} holds true, which is equivalent to \eqref{NEsys}, 
hence $(\tau_{a^*}^-,T_{l^*}^-)$ is a Nash equilibrium in the sense of \eqref{Nasheq}.
\end{theorem}
\begin{proof}
Recall firstly that the coupled system of equalities in \eqref{opt_for_CP} can be proved by showing that the solutions to \eqref{opt_for_C_} and \eqref{opt_for_P_} are given by $v_c(x; a^*, l^*)$ and $v_p(x; a^*, l^*)$, respectively, when the pair $(a^*, l^*)$ is obtained in Proposition \ref{existNash}. 
We thus split the proof in the following two steps. 

\vspace{3pt}
{\it Step I. Proof that $v_c(x; a^*, l^*)$ is the solution to \eqref{opt_for_C_}.}
We begin by denoting the problem \eqref{opt_for_C_} by $u(x)$.
We thus aim at proving that 
\begin{equation} \label{proveOS1}
u(x) = 
v_c(x; a^*,l^*) .
\end{equation}
Using the definition \eqref{hittingtimes} of $T^-_{l^*}$ and the independence of the Poisson process $N$ and L\'evy process $X$, we can rewrite the current optimal stopping problem with random time-horizon $T^-_{l^*}$, in the form of
\begin{equation} \label{auxOS2}
u(x) 
= \sup_{\tau\in\mathcal{T}_c}  \E_x \Big[  e^{- A^{X}_\tau} f_c(X_{\tau}) 1_{\{\tau<\infty\}} \Big] ,
\end{equation}
where the latter is a perpetual optimal stopping problem with stochastic discounting given by the occupation time 
$$
A^{X}_t := q t + \lambda \int_{0}^{t} 1_{\{X_u < l^*\}} \du ,
\quad \forall \; t \geq 0 ,
$$
(see \cite{LZ} and \cite{RZ1} for other optimal stopping problems with occupation time discounting under one-dimensional L\'evy models, and \cite{RZ2} under two-dimensional ones).
It is easy to see that $A^{X}_{\cdot}$ is a continuous additive functional.

In order to use the results of \cite{LZ}, where they consider the spectrally negative case,  we define
the process $Y_t := -X_t$, for all $t\geq 0$, which is a spectrally negative L\'evy process starting from $Y_0 =-x$.  
We then define 
\begin{equation} \label{f} 
\widehat{f}_c(y) := f_c(-y) \quad \text{for all $y\in\R$},
\end{equation}
and we observe from Assumption \ref{Ass}$.(ii)$ that $\widehat{f}_c(\cdot)$ is a strictly increasing, continuously differentiable and concave function on $\R$, such that $\widehat{f}_c(y) > 0$ if and only if  $y > -\overline{x}_c$. 
Hence, the optimal stopping problem \eqref{auxOS2} can be further rewritten in terms of the process $Y$, with  $\widehat{\E}_y := \E_{-y}$ and $x = -y$, taking the form
\begin{align} \label{auxOS3}
u(x) 
&= \sup_{\tau\in\mathcal{T}_c}  \widehat{\E}_{y} \Big[ e^{- A^{-Y}_\tau} f_c(-Y_{\tau}) 1_{\{\tau<\infty\}} \Big] 
= \sup_{\tau\in\mathcal{T}_c}  \widehat{\E}_y \Big[ e^{- \widehat A^{Y}_\tau} \widehat{f}_c(Y_{\tau}) 1_{\{\tau<\infty\}} \Big] =: \widehat u(y) \,,
\end{align}
where we notice that 
$$
A^{-Y}_t = \widehat A^{Y}_t := q t + \lambda \int_{0}^{t} 1_{\{Y_u > -l^*\}} \du ,
\quad \forall \; t \geq 0.
$$
Then, we define the left-inverse $\zeta$ of $\widehat A^Y$ at an independent exponential time $\mathbf{e}$ with unit mean by 
$$
\zeta\equiv (\widehat{A}^Y)^{-1}(\mathbf{e}):=\inf\{t>0: \widehat{A}_t^Y>\mathbf{e}\}
$$
and the running maximum process  of $Y$ by $\overline{Y}_t := \sup_{0\leq u\leq t} Y_u$. 
Using similar arguments to \cite[Section 4.1]{RZ1}, we obtain for $y \leq -a$, that 
\begin{align*}
\widehat{\p}_y( \overline{Y}_{\zeta}>-a)
=\widehat{\E}_y\Big[\exp(- \widehat A^Y_{\widehat \tau_{-a}^+})\, 1_{\{\widehat \tau_{-a}^+<\infty\}}\Big] 
&= \E_{-y}\Big[\exp(- A^X_{\tau_{a}^-})\, 1_{\{\tau_a^-<\infty\}}\Big] \\
&= \E_{-y}\Big[  e^{-q \tau_{a}^-} 1_{\{\tau_{a}^-<T_{l^*}^-\}} \Big]
= \dfrac{Z^{(q+\lambda)}(l^*+y;\Phi(q))}{Z^{(q+\lambda)}(l^*-a;\Phi(q))}\,,
\end{align*}
where the second equality follows from the definition of $Y$, which yields
\begin{equation} \label{taus}
\widehat \tau_{-a}^+:=\inf \{ t > 0: Y_t > -a\} = \inf \{ t > 0: X_t < a\} \equiv \tau_a^- \,,
\end{equation}
while the last equality follows from \eqref{vf_1C} (for $f_c(\cdot)\equiv 1$) in Proposition \ref{limit_cont_SP}.  
Therefore, using \eqref{Z_Phi_der}, we can define the ``hazard rate'' function $\Lambda(\cdot)$ on $\R$ such that
\begin{align} \label{Lambda}
\Lambda(z) := - \frac{1}{\widehat{\p}_y(\overline{Y}_\zeta>z)} \, 
\frac{\partial}{\partial z} \left( \widehat{\p}_y(\overline{Y}_\zeta > z) \right)
=  \Phi(q) + \lambda \, \frac{W^{(q+\lambda)}(l^*+z)}{Z^{(q+\lambda)}(l^*+z;\Phi(q))},
\end{align}
which holds for all $y \leq z$.
It is clear that $\int_x^\infty \Lambda(z) \diff z = \infty$ and hence satisfies \cite[Assumption 2.2]{LZ}.
Now define the function $\widehat{h}(\cdot)$ on $\R$, which is given by (cf.\  \eqref{f})
\begin{equation} \label{h}
\widehat{h}(y) := \widehat{f}_c(y) - \frac{\widehat{f}_c'(y)}{\Lambda(y)} 
= f_c(-y) + \frac{f_c'(-y)}{\Lambda(y)}. 
\end{equation}
Due to the definition \eqref{Jal} of $I(\cdot;l^*)$ together with \eqref{vCl} and the expression \eqref{Lambda} of $\Lambda$, we further have 
\begin{equation*}
\widehat{h}(y)
= (\Lambda(y))^{-1} \bigg( f_c(-y) \Big( \Phi(q)  + \lambda \frac { W^{(q+\lambda)}(l^*+y)} {Z^{(q+\lambda)}(l^*+y;\Phi(q))} \Big) + f_c'(-y) \bigg)
=  \, \frac {I(-y;l^*)  }  {\Lambda(y)}
\end{equation*}
which implies via the positivity of $\Lambda(\cdot)$ on $\R$ and the results in Lemma \ref{lem:exist_a}.(II).$(ii)$ for $a^*=\tilde a(l^*)$ that
$$
\widehat{h}(y) \begin{cases}
> 0, &\text{for } -y < a^* \Leftrightarrow y > -a^*, \\
\leq 0, &\text{for } -y \geq a^* \Leftrightarrow y \leq -a^*.
\end{cases}
$$
Finally, by taking the first derivative in \eqref{h}, we obtain by straightforward calculations that 
$$
\widehat{h}'(y) = - f_c'(-y) - \frac{f_c''(-y)}{\Lambda(y)} - \frac{f_c'(-y) \Lambda'(y)}{\Lambda^2(y)} > 0 , \quad \text{for a.e. } y\in\R. 
$$
The positivity is
due to Assumption \ref{Ass}$.(ii)$  and the fact that the combination of the  definition \eqref{Lambda} of $\Lambda(\cdot)$ together with Lemma \ref{W/Z} implies that 
\begin{equation*} 
y \mapsto \Lambda(y) = \Phi(q) + \lambda \, \frac{W^{(q+\lambda)}(l^*+y)}{Z^{(q+\lambda)}(l^*+y;\Phi(q))} 
\quad \text{is increasing on } \R.
\end{equation*} 
This shows that \cite[Assumption 2.3]{LZ} holds true (with $x^\star$ replaced with $-a^*$).

Now from \cite[Theorem 2.2]{LZ} (see also \cite[Section 4.1--4.2]{RZ1} for a similar result), which states under \cite[Assumptions 2.2 and 2.3]{LZ} that the root of $\widehat{h}(\cdot) = 0$ (in our case $-a^*$) gives the optimal strategy. In other words, 
the optimal stopping time for $\widehat u$ in \eqref{auxOS3} is given by $\widehat \tau_{-a^*}^+$. 
This yields in view of \eqref{taus} that the optimal stopping time for $u$ is given by $\tau_{a^*}^-$ and consequently \eqref{proveOS1} holds true, which completes the proof of this step.

\vspace{3pt}
{\it Step II. Proof that $v_p(x; a^*, l^*)$ is the solution to \eqref{opt_for_P_}.}
By the smoothness obtained in Proposition \ref{smooth}.(II), and because $v_p(x; a^*, l^*)$ is bounded in $x$ by $\max_{a^* \leq y \leq l^*} f_p(y) = f_p(a^*) \leq f_p(\ul x_c)<\infty$, due to the monotonicity of $f_p$ in Assumption \ref{Ass}.(ii), the admissible interval for $a^*=\tilde a(l^*)$ in Lemma \ref{lem:exist_a}.(II).(i), and the finiteness of $\ul x_c$ in Lemma \ref{lemma_x_c_finite}, we conclude that $\mathcal{L}  v_p(x; a^*, l^*)$ is defined for all $x \in \R \backslash \{ a^*\}$.

%

The remainder of the proof thus requires the verification of the five conditions assumed in Lemma \ref{verification_split} to eventually conclude the desired optimality.

{\it Part} (i).
This follows from the expression \eqref{vf_simpleP} of $v_p$ and the direct computation of  
\begin{align*}
(\mathcal{L} - q) e^{\Phi(q)(l^*-x)} = 0. 
\end{align*} 

{\it Part} (ii). By  (4.20) in \cite{PY} and Lemma 4.5 in \cite{EY}, respectively, we get 
\begin{align*}
&\big(\mathcal{L} - (q+\lambda)\big) Z^{(q+\lambda)}(l^*-x;\Phi(q)) = 0 
\qquad \text{and} \qquad
\big(\mathcal{L} - (q+\lambda)\big) \Gamma(x; l^*) 
= f_p(x). \nonumber
\end{align*}
Applying these equations to the expression \eqref{vf_1P} of $v_p$, we obtain the desired  
$\big(\mathcal{L} - (q+\lambda)\big) v_p(x; a^*, l^*) = - \lambda f_p(x)$.

{\it Parts} (iii) {\it and} (iv). 
Using the expression \eqref{v_p_opt} of $v_p$, we define
\begin{align} \label{RP}
R_p(x) := v_p(x;a^*,l^*) - f_p(x) 
= Z^{(q+\lambda)}(l^*-x;\Phi(q)) \, f_p(l^*) - \lambda
\Gamma(x; l^*) - f_p(x), \quad x \geq a^*.
\end{align}
In particular, we have by \eqref{vf_1P}
$$R_p(a^*) = v_p(a^*;a^*,l^*) - f_p(a^*) = - f_p(a^*) < 0 \,,$$
and by condition $\mathbf{C}_l$ in \eqref{cond_lP}, we have 
\begin{equation} \label{RPl}
R_p(l^*) = 0 \,.
\end{equation} 

First, suppose that $x \geq l^*$ aiming for the proof of (iii). 
By \eqref{vf_simpleP}, we get the simplified expression
\begin{align}\label{fun_hP}
R_p(x) 
= v_p(x;a^*,l^*) - f_p(x) 
= e^{\Phi(q)(l^*-x)} \, v_p(l^*;a^*,l^*) - f_p(x), \quad x \geq l^*.
\end{align}
By calculating its first and second derivatives, we get
\begin{align*}
R_p'(x) &= - \Phi(q) \, e^{\Phi(q)(l^*-x)} \, v_p(l^*;a^*,l^*) - f'_p(x) \,, \\
R_p''(x+) &= \Phi^2(q) \, e^{\Phi(q)(l^*-x)} v_p(l^*;a^*,l^*) - f''_p(x+) > 0 \,,
\end{align*}
where the last inequality follows from Assumption \ref{Ass}.(ii) and because $v_p(l^*;a^*,l^*) > 0$. 
We can thus immediately see that $R_p(\cdot)$ is convex on $(l^*, \infty)$, 
while the inequality \eqref{cond_l_optP} for the equilibrium threshold pair $(l,a)=(l^*,a^*)$ and the definition \eqref{def_h_p_c} of $h_p$, imply that
\begin{equation*}
R_p'(l^*) = - \Phi(q) \, v_p(l^*;a^*,l^*) - f'_p(l^*) = - \Phi(q) \, f_p(l^*) -  f'_p(l^*) = -h_p(l^*) > 0. 
\end{equation*}
Therefore, combining the above inequality with the convexity of $R_p(\cdot)$ we get that 
$R_p'(x) > 0$ for all $x > l^*$.  By this and \eqref{RPl}, 
we have that $R_p(x) \geq 0$ for all $x \geq l^*$. 
In all, the definition \eqref{fun_hP} yields $v_p(x;a^*,l^*) \geq f_p(x)$ for all $x \geq l^*$, proving that part $\rm (iii)$ indeed holds true. 

Suppose now that $a^* \leq x < l^*$ aiming for the proof of (iv). 
Differentiating \eqref{RP} and using \eqref{def_h_p_c}, we get
\begin{align*}
R_p'(x) 
&= - \Phi(q) Z^{(q+\lambda)}(l^*-x;\Phi(q)) \, f_p(l^*) - \lambda\int_0^{l^*-x} f_p'(u+x) \, W^{(q+\lambda)}(u) \du - f_p'(x) \\
&= - \Phi(q) R_p(x) - \lambda\int_0^{l^*-x} h_p(u+x) \, W^{(q+\lambda)}(u) \du - h_p(x) \,.
\end{align*}
Then, we define $\ol R_p(x) := e^{\Phi(q) x} R_p(x)$ and observe that 
\begin{align*}
\ol R_p'(x) 
=  e^{\Phi(q)x}\bigg[ - \lambda\int_0^{l^*-x} h_p(u+x) \, W^{(q+\lambda)}(u) \du - h_p(x) \bigg].
\end{align*}
In view of  \eqref{x_p_and_h_p}, we have for all $x \in (\ul x_p, l^*)$, which is a well-defined interval (cf.\ Lemma \ref{lem:exist_l}), that $h_p(x) < 0$. Therefore, $\ol R_p'(x)>0$ for all $x \in (\ul x_p, l^*)$. 
Combining this monotonicity with the fact that $\ol R_p(l^*)=0$ (see \eqref{RPl} and the definition of $\ol R_p$), we conclude that 
\begin{equation*} 
R_p(x) < 0 \,, \quad \text{for all } x \in [\ul x_p, l^*) \,.
\end{equation*} 

For $x \leq \underline{x}_p$,
using the probabilistic expression of $v_p$ in the definition \eqref{vf_0PC}, we observe that 
\begin{align*} 
v_p(x;a^*,l^*) 
&= \E_x \Big[ e^{-qT_{l^*}^-}f_p(X_{T_{l^*}^-}) 1_{\{T_{l^*}^-<\tau_{a^*}^-\}} \Big] 
\leq \E_x \Big[ e^{-qT_{l^*}^-}f_p(X_{T_{l^*}^-}) 1_{\{T_{l^*}^-<\infty\}} \Big] \notag\\ 
&\leq \sup_{\tau \in \mathcal{T}_c} \E_x \Big[ e^{-q\tau}f_p(X_{\tau}) 1_{\{\tau<\infty\}} \Big] 
= \E_x \Big[ e^{-q \tau^-_{\ul x_p}}f_p(X_{\tau^-_{\ul x_p}}) 1_{\{\tau^-_{\ul x_p}<\infty\}} \Big] \,,
\end{align*}
where the last optimality holds similarly to Section \ref{lambda=0} 
(by using $f_p$ instead of $f_c$ and $h_p$ from \eqref{def_h_p_c} instead of $h^o_c$ in \eqref{h_c_o} and using \eqref{x_p_and_h_p}).  Hence,
\begin{equation*} 
v_p(x;a^*,l^*) \leq  \E_x \Big[ e^{-q \tau^-_{\ul x_p}}f_p(X_{\tau^-_{\ul x_p}}) 1_{\{\tau^-_{\ul x_p}<\infty\}} \Big]  =  f_p(x) 
\,, \quad \text{for all } x \in [a^* \vee \ul x_p, \ul x_p].
\end{equation*} 

{\it Part} (v). This follows directly from the definition of $v_p(x;a^*,l^*)$ in \eqref{vf_0PC}, and completes the proof. 
\end{proof}

\section{Numerical results} \label{section_numerics}

In this section, we illustrate the analytical results focusing on case studies with put option type rewards for both players. 
These are in line with our main application of firm acquisitions in the Introduction, where the two players, an established, large and well-diversified firm $C$ and a smaller, less diversified firm $P$, are after acquiring a target firm at some time $t$, by paying its market value $e^{X_t}$.
Suppose that $f_i(x) = K_i - e^x$, for $i \in\{c,p\}$, for some fixed $K_p \geq K_c$.\footnote{ Note that $(K_i - e^x)^+$ is usually used to describe the payoff of a financial put option, but these two problem formulations are equivalent. 
This is because it is never optimal to stop when the payoff is negative -- as shown in Lemma  \ref{remark_domain_to_focus}, the optimal thresholds must lie in $(-\infty, \bar{x}_i)$, where the payoffs are strictly positive, thus $(K_i - e^x)^+ = K_i - e^x$ for all $x \in (-\infty, \bar{x}_i)$.}
The constants $K_i$ could reflect  
the benefits of acquisition for each firm, and the property $K_p \geq K_c$ reflects the fact that the benefits of acquisition could be lower for the already-diversified firm $C$ compared to $P$, due to the synergy effect (see, e.g.\ \cite{Zhou} for more details).
In the sequel, we examine both unequal $K_p > K_c$ and equal $K_p = K_c$ benefits of acquisition in our case studies.
In these case studies, Assumption \ref{Ass} is therefore satisfied with $\overline{x}_i = \log K_i$, while  
Assumption \ref{assump_a_underbar_finite} holds true with $\underline{a} = \log \frac {\Phi(q)K_c} {1+ \Phi(q)}$. We also have in view of \eqref{x_p_and_h_p} that
\begin{align*}
\underline{x}_p &= \log \frac {K_p \Phi(q)} {1+\Phi(q)} = \bar{x}_p +  \log \frac {\Phi(q)} {1+\Phi(q)}, \\ 
 \underline{x}_c &= \log \frac {(\Phi(q)+\lambda W^{(q+\lambda)}(0)) K_c} {1+ \Phi(q)+\lambda W^{(q+\lambda)}(0)} = \overline{x}_c + \log \frac {\Phi(q)+\lambda W^{(q+\lambda)}(0)} {1+ \Phi(q)+\lambda W^{(q+\lambda)}(0)}.
\end{align*}
%
%
%
As an underlying asset price $e^X$,
we consider the case of $X$ that is of the form
\begin{equation*}
 X_t = X_0 - \mu t+ \nu B_t + \sum_{n=1}^{M_t} Z_n, \quad 0\le t <\infty, 
\end{equation*}
where $\mu>0$ and $\nu\geq 0$ are constants, $B=( B_t: t\ge 0)$ is a standard Brownian motion, $M=(M_t: t\ge 0 )$ is a Poisson process with arrival rate $\alpha$, and  $Z = ( Z_n: n = 1,2,\ldots )$ is an i.i.d.\ sequence of exponential random variables with parameter $\beta$. The processes $B$, $M$, and $Z$ are assumed mutually independent. 
In this model, the scale functions admit explicit expressions that can be found, e.g. in \cite{Egami_Yamazaki_2010_2, KKR}.

In all forthcoming analyses, we use the parameter values $\nu = 0.2$, $\alpha = 1$, $\beta = 2$, $q = 0.05$, $\mu = 0.313$,  
and unless stated otherwise, we also set $\lambda = 1$.

%
%

\subsection{Optimality: Asymmetric rewards case ($K_p = 60$, $K_c = 50$)} 
 
As discussed in Section \ref{1storder}, 
the optimal barriers corresponding to the Nash equilibrium $(a^*,l^*)$ are those satisfying simultaneously the conditions $\mathbf{C}_a$ and $\mathbf{C}_l$, given by \eqref{cond_aC}--\eqref{cond_lP} and expressed as the roots of $a \mapsto I(a;l)$ in \eqref{Jal} and $l \mapsto J(l;a)$ in \eqref{Ila}, respectively. 

In Figure \ref{figure_C_l_a}.(i), we plot $a \mapsto I(a;l)$ for five values of $l$ equally spaced between $\underline{x}_c$ and $\overline{x}_p$ (cf.\ Proposition \ref{existNash}). 
It is observed that $I(\cdot;l)$ starts from positive values and ends at non-positive values -- admitting a unique root $\tilde{a}(l) \in [\underline{x}_c, \overline{x}_c \wedge l]$. 
This illustrates Lemma \ref{lem:exist_a}.(II).(i)--(ii).
In Figure \ref{figure_C_l_a}.(ii), we plot $l \mapsto J(l;a)$ for five values of $a$ equally spaced between $\underline{x}_c$ and $\overline{x}_c$ (cf.\ Proposition \ref{existNash}). 
It is observed that $J(\cdot;a)$ starts from positive values and ends at negative values -- admitting a unique root $\tilde{l}(a) \in [\underline{x}_p, \ol{x}_p]$. This illustrates Lemma \ref{lem:exist_l}.(i)--(ii). 

\begin{figure}[htbp]
\vspace{-5mm}
\begin{center}
\begin{minipage}{1.0\textwidth}
\centering
\begin{tabular}{cc}
 \includegraphics[scale=0.5]{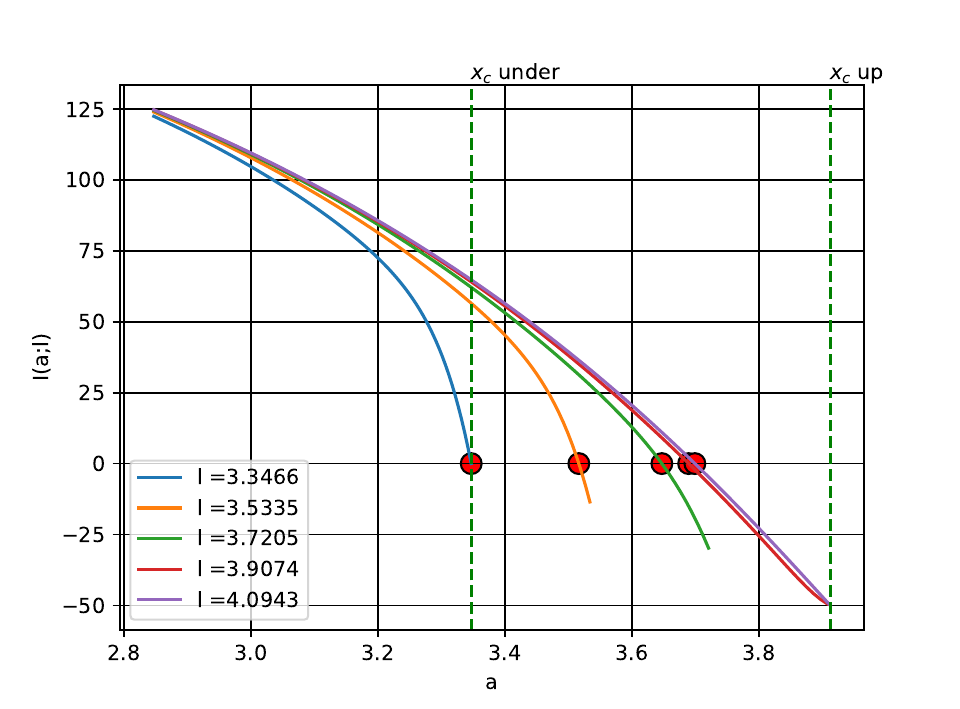} & \includegraphics[scale=0.5]{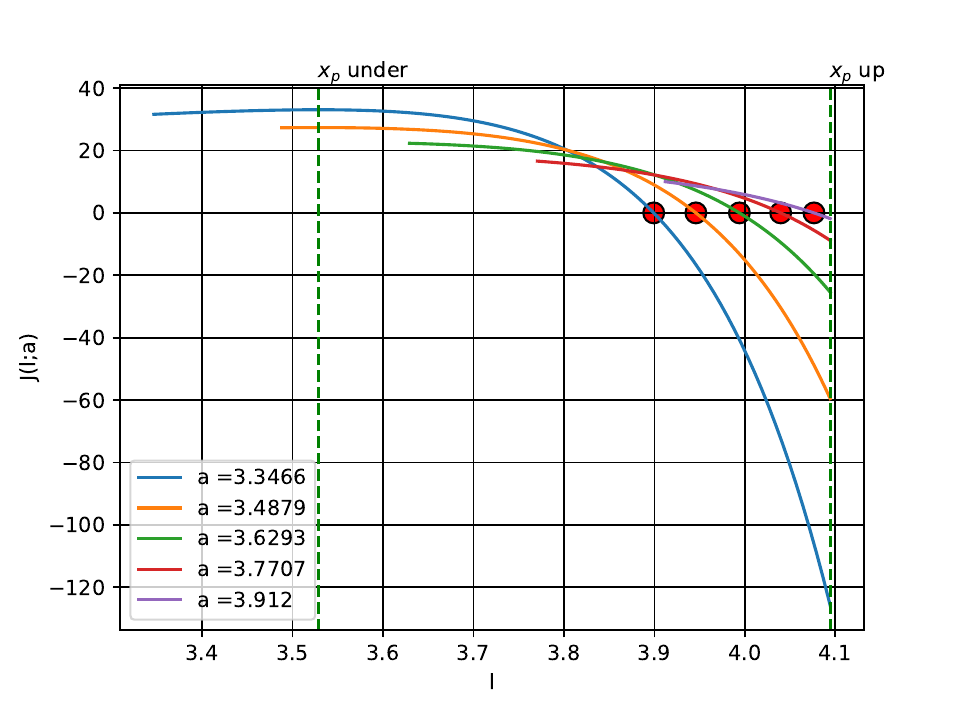}  \\
(i) $a \mapsto I(a;l)$ & (ii)  $l \mapsto J(l;a)$
 \end{tabular}
\end{minipage}
\end{center}
\vspace{-4mm}
\caption{(i) Plot of $a \mapsto I(a;l)$  on $[{\underline{x}_c-0.5}, l \wedge \overline{x}_c]$ for $l = \underline{x}_c, \ldots, \overline{x}_p$. 
The roots of $I(\cdot; l) = 0$ are indicated by circles and the vertical dotted lines correspond to $a = \underline{x}_c,  \overline{x}_c$.
(ii) Plot of $l \mapsto J(l;a)$ on $[a, \overline{x}_p]$ for $a = \underline{x}_c, \ldots, \overline{x}_c$. The roots of $J(\cdot; a) = 0$ are indicated by circles and the vertical dotted lines correspond to $l = \underline{x}_p, \overline{x}_p$.
} \label{figure_C_l_a}
\end{figure}

The roots $\tilde{a}(l)$ and $\tilde{l}(a)$ can be computed via classical bisection. 
In Proposition \ref{existNash}, we showed the existence of $l^*$ such that $J(l^*;\tilde{a}(l^*))=0$, hence $(\tilde{a}(l^*), l^*)$ 
becomes a Nash equilibrium. 
In fact, as shown in Figure~\ref{figure_J_I_2}.(ii), the mapping $l \mapsto J(l; \tilde{a}(l))$ is monotone, hence the value $l^*$ (and therefore  also $a^*:=\tilde{a}(l^*)$) is unique in this case study.
For completeness, we also plot $a \mapsto I(a; \tilde{l}(a))$ in Figure \ref{figure_J_I_2}.(i). 
It is also confirmed to be monotone and the unique root $a^*$ such that $I(a^*;\tilde{l}(a^*))=0$ leads to the unique Nash equilibrium $(a^*, \tilde{l}(a^*)) \equiv (\tilde{a}(l^*), l^*)$.

\begin{figure}[htbp]
\vspace{-4mm}
\begin{center}
\begin{minipage}{1.0\textwidth}
\centering
\begin{tabular}{cc}
 \includegraphics[scale=0.5]{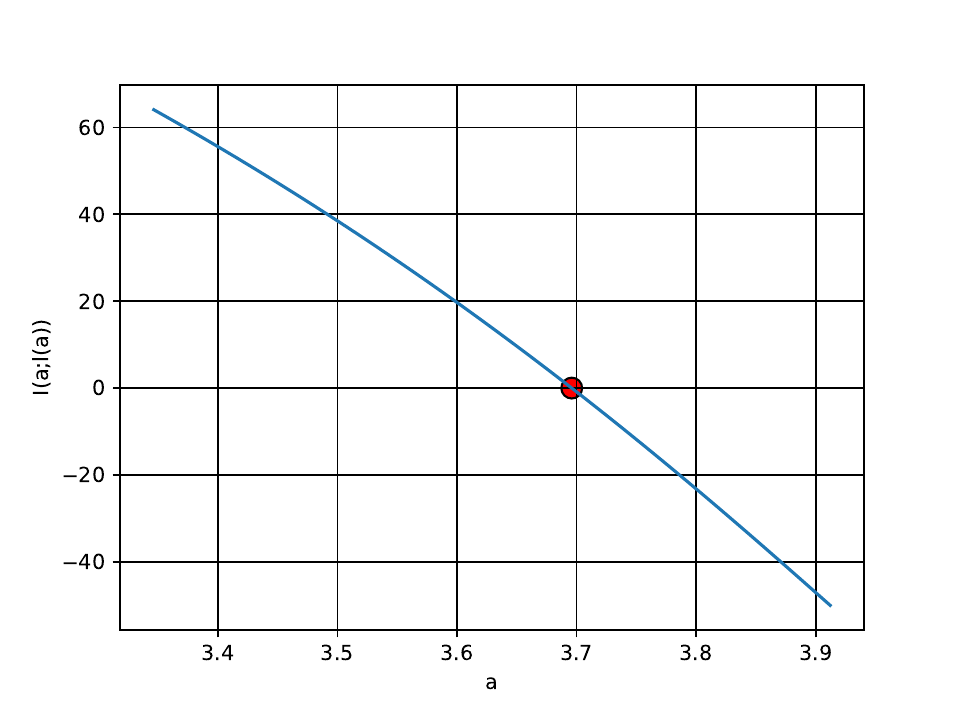} &  \includegraphics[scale=0.5]{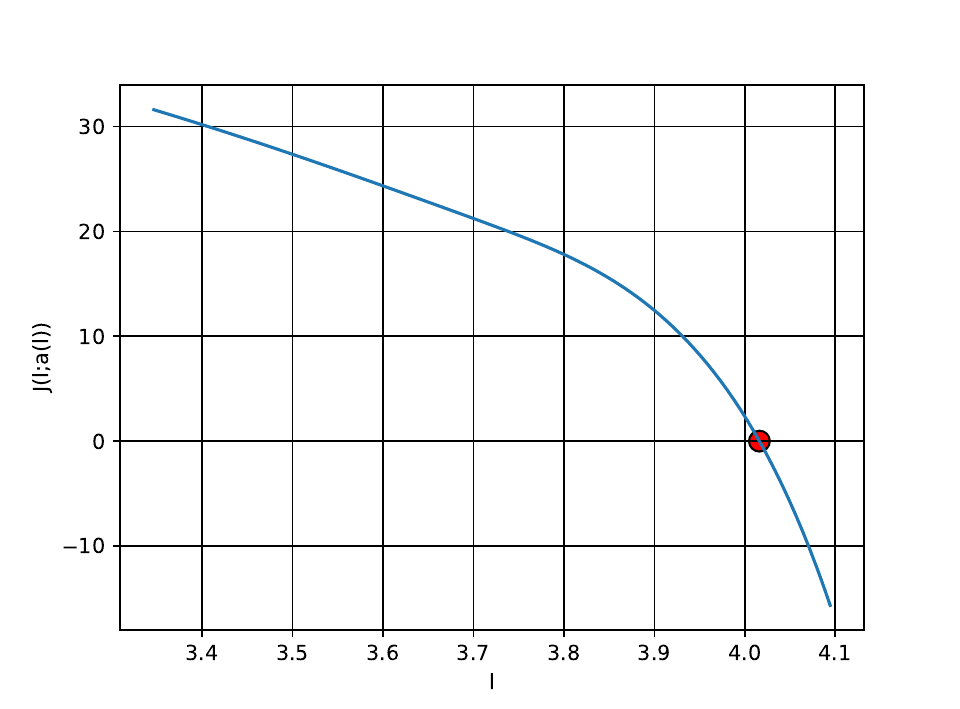} \\
(i)  $a \mapsto I(a;\tilde{l}(a))$ & (ii) $l \mapsto J(l;\tilde{a}(l))$  
 \end{tabular}
\end{minipage}
\end{center}
\vspace{-4mm}
\caption{(i) Plot of $a \mapsto I(a;\tilde{l}(a))$ on $[\underline{x}_c, \overline{x}_c]$.  The root of $I(\cdot;\tilde{l}(\cdot)) = 0$ is indicated by the circle.
(ii) Plot of $l \mapsto J(l;\tilde{a}(l))$ on  $[\underline{x}_c, \overline{x}_p]$. The root of $J(\cdot;\tilde{a}(\cdot)) = 0$ is indicated by the circle. 
} \label{figure_J_I_2}
\end{figure}

%

With $(a^*,l^*)$ obtained by the above procedures, we then compute the value functions for both players $C$ and $P$ and illustrate their optimality.  To this end, we compare them with those for different {(suboptimal)} choices {of $(a,l^*)$ and $(a^*,l)$}. 
In Figure \ref{figure_optimality}, we illustrate the results, showing that the value function indeed dominates those with wrong barrier selections for both players. 
For player $P$, we also plot the reward function $e^x \mapsto K_p- e^x$, to illustrate that the optimal barrier $l^*$ (in the plot, $e^{l^*}$) is the only barrier at which 
the value function and reward function coincide {(cf.\ optimality condition $\mathbf{C}_l$ in \eqref{cond_lP})}.



\begin{figure}[htbp]
\vspace{-5mm}
\begin{center}
\begin{minipage}{1.0\textwidth}
\centering
\begin{tabular}{cc}
 \includegraphics[scale=0.5]{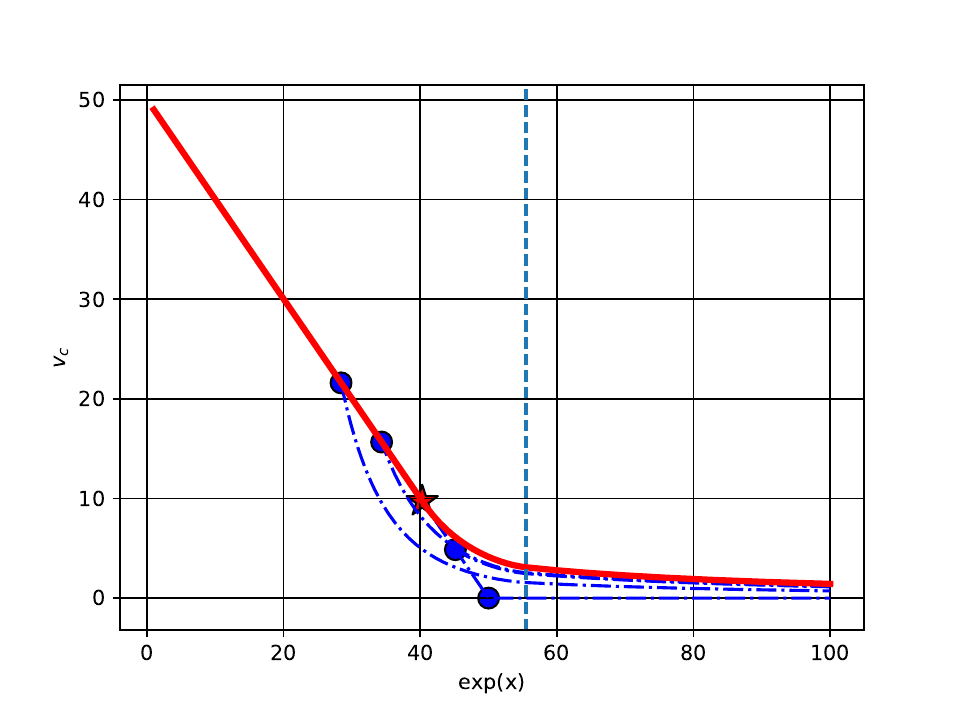} & \includegraphics[scale=0.5]{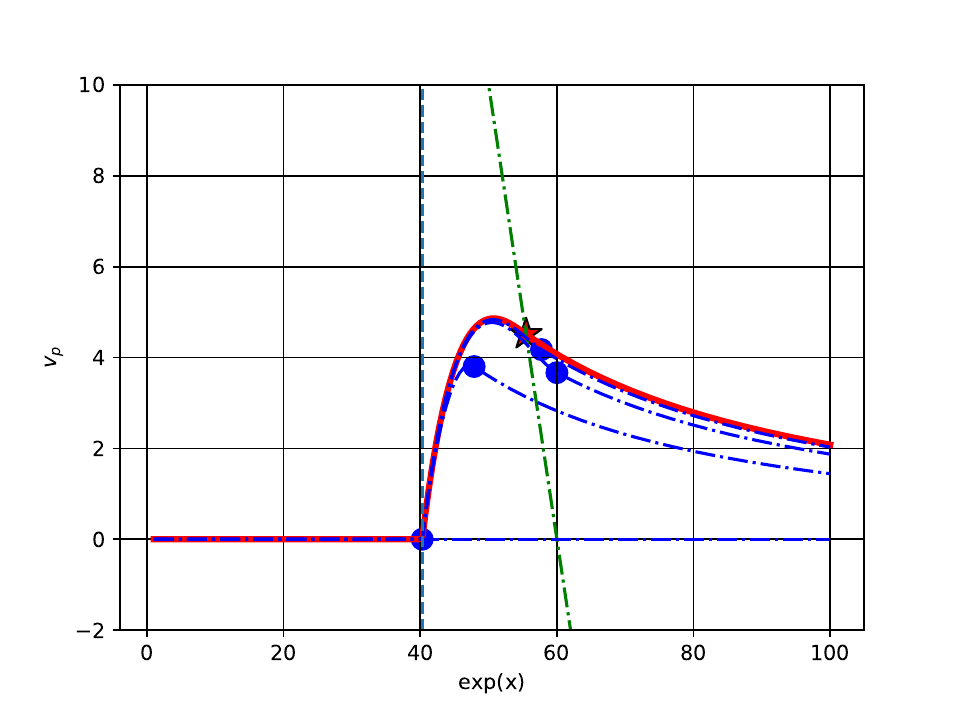}  \\
(i) $e^x \mapsto v_c(x; a,l^*)$ & 
(ii) $e^x \mapsto  v_p(x; a^*,l)$ 
\end{tabular}
\end{minipage}
\vspace{-4mm}
\caption{(i) Plots of $e^x \mapsto v_c(x; a^*,l^*)$ in red, along with $e^x \mapsto v_c(x; a,l^*)$ in dotted blue for $e^a = e^{\underline{x}_c}, (e^{\underline{x}_c} + e^{a^*})/2, ( e^{a^*} + K_c)/2, K_c$. The points at $a$ and $a^*$ are indicated by circles and a star, respectively. The value at $l^*$ is indicated by the dotted vertical line. 
(ii) Plots of $e^x \mapsto v_p(x; a^*,l^*)$ in red, along with $v_p(x; a^*,l)$ in dotted blue for $e^l = e^{a^*}, (e^{a^*} + e^{l^*})/2, (K_p+ e^{l^*})/2, K_p$. The points at $l$ and $l^*$ are indicated by circles and a star, respectively. The value at $a^*$ is indicated by the dotted vertical line and the green line depicts the (reward) mapping $e^x\mapsto K_p-e^x$.  
} \label{figure_optimality}
\end{center}
\end{figure}

\subsection{Optimality: Symmetric rewards case ($K_c=K_p = 60$)} 

In order to illustrate that our results hold for symmetric reward functions $f_c \equiv f_p$, we repeat the experiment with all other parameters unchanged. 
Figure \ref{plot_symmetric} plots the same functions as those in Figures \ref{figure_C_l_a}, \ref{figure_J_I_2} and \ref{figure_optimality} when $K_c$ was chosen to be $50$. 
In particular, the optimality of the selected thresholds is illustrated in the figures at the bottom. 
It is interesting to note that the shapes of these functions remain the same. 
This is due to the fact that, although the rewards are the same for the two players, the game is still asymmetric in the sense that player $C$ can choose to stop anytime, while player $P$ can stop only at Poisson arrival times.

\begin{figure}[htbp]
\vspace{-4mm}
\begin{center}
\begin{minipage}{1.0\textwidth}
\centering
\begin{tabular}{cc}
 \includegraphics[scale=0.5]{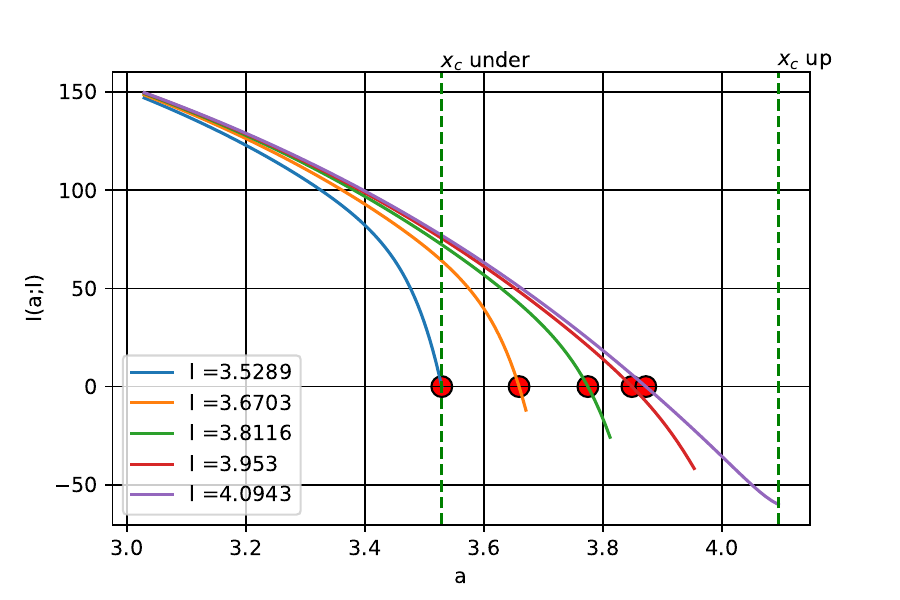} &  \includegraphics[scale=0.5]{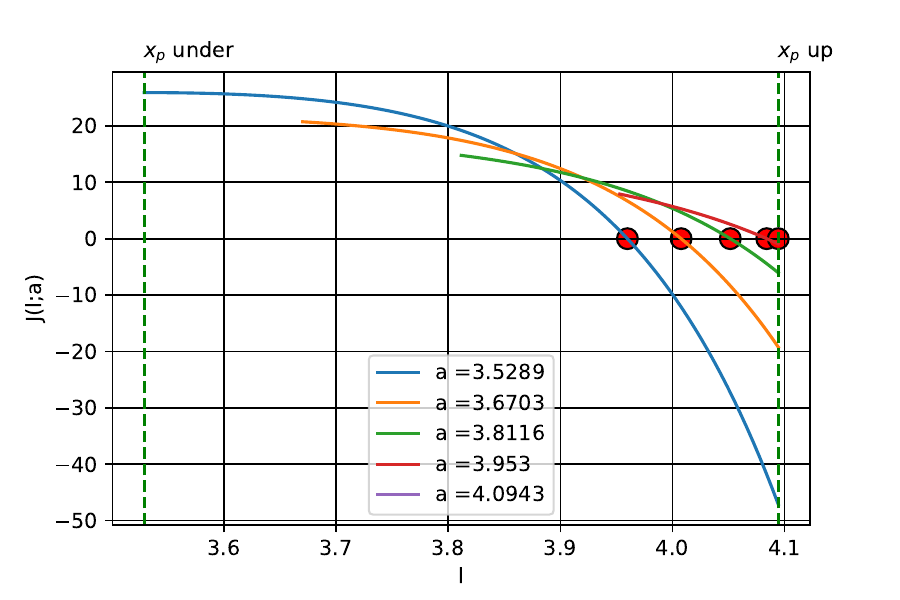} \\
$a \mapsto I(a;l)$ &   $l \mapsto J(l;a)$ \\
  \includegraphics[scale=0.5]{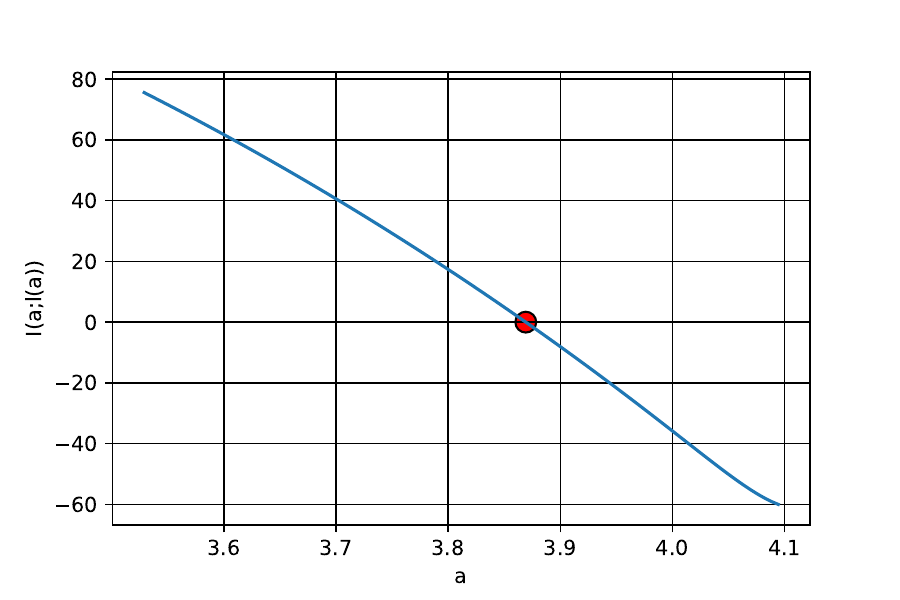} &  \includegraphics[scale=0.5]{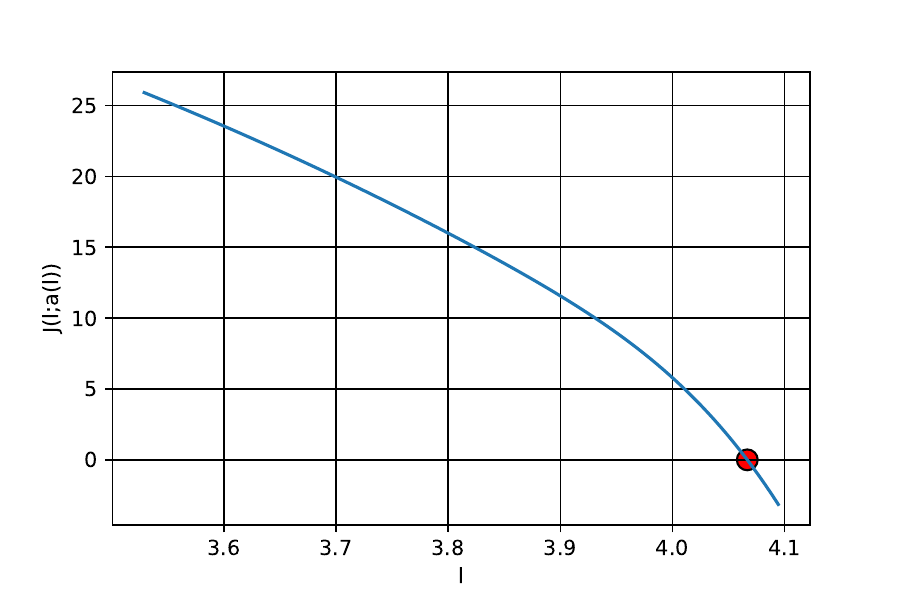} \\
  $a \mapsto I(a;\tilde{l}(a))$ &  $l \mapsto J(l;\tilde{a}(l))$  \\
   \includegraphics[scale=0.5]{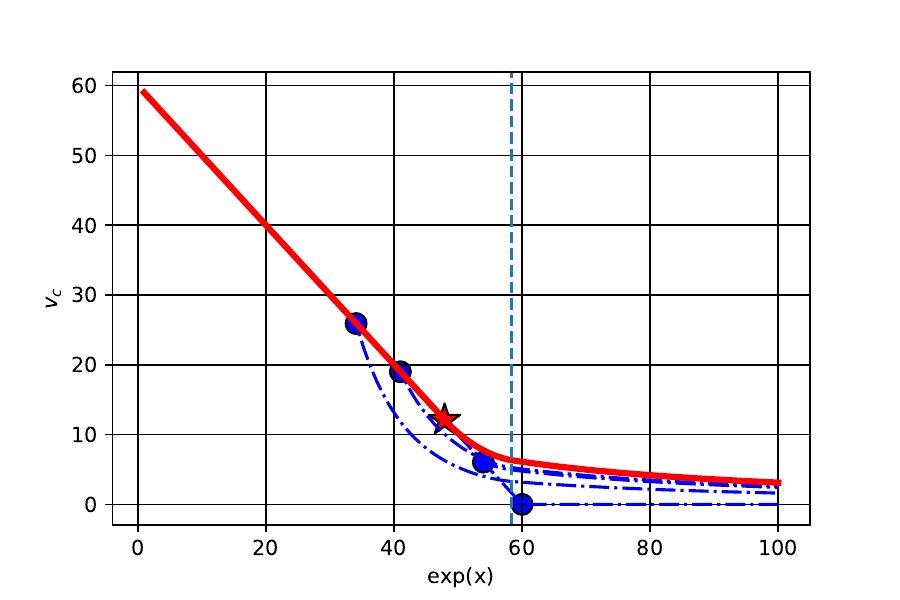} &  \includegraphics[scale=0.5]{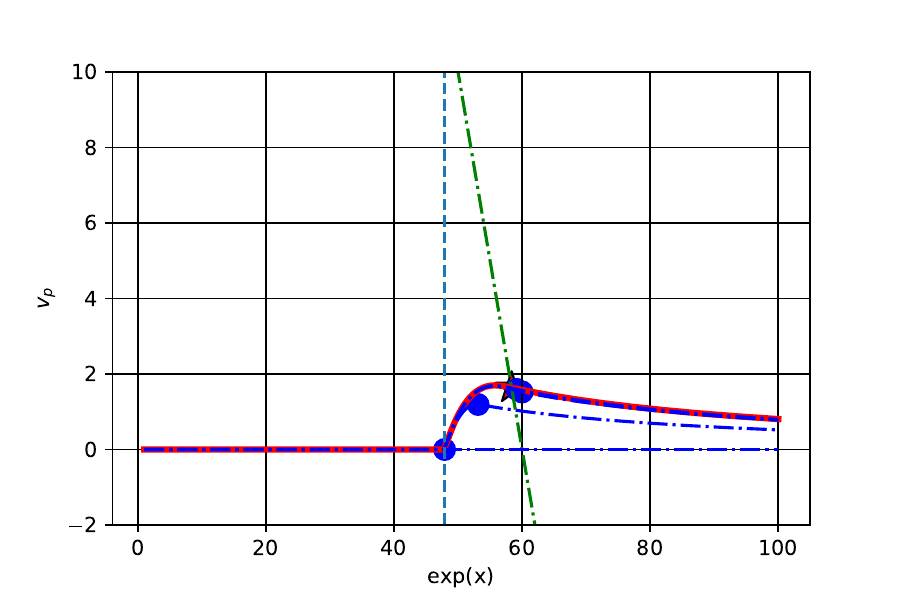}  \\
    $e^x \mapsto v_c(x; a,l^*)$ & 
 $e^x \mapsto  v_p(x; a^*,l)$ 
 \end{tabular}
\end{minipage}
\end{center}
\vspace{-4mm}
\caption{Symmetric rewards case with $K_c=K_p = 60$. 
The panels in the first, second and third rows plot the same functions as those in Figure \ref{figure_C_l_a}, \ref{figure_J_I_2} and \ref{figure_optimality}, respectively, when $K_c = 50$.} \label{plot_symmetric}
\end{figure}

\subsection{Sensitivity with respect to $\lambda$: Asymmetric rewards case ($K_p = 60$, $K_c = 50$)}

We shall now analyse how the equilibrium strategies of both players change with respect to the 
{the rate of exercise opportunities} $\lambda$ of player $P$.  
In Figure \ref{plot_lambda}.(i)--(ii), we plot the equilibrium value function of each player for $\lambda$ ranging from $0.1$ to $500$. We see that $ v_c(x; a^*,l^*)$ is monotonically decreasing in $\lambda$ for all asset values  $e^x$, {whereas $ v_p(x; a^*,l^*)$ 
seems monotonically increasing for large values of $e^x$, but the monotonicity is non-conclusive when the asset value $e^x$ is low, due to the involvement of player $C$}.

To complete the picture, we plot in Figure \ref{plot_lambda}.(iii)  the barriers $a^*$ and $l^*$ as functions of $\lambda$. 
As $\lambda$ increases, the threshold $a^*$ 
seems to converge to some value close to, but slightly smaller than $\overline{x}_c = \log K_c = \log 50$, so that it yields a positive reward at stopping, while $l^*$ converges to some value larger than $\overline{x}_c$.\footnote{We tested this for several other selections of parameters and obtained a similar asymptotic behaviour.} 
When $\lambda$ is very large, the 
{exercise opportunities} of player $P$ are almost as frequent as those of player $C$. 
However, player $C$'s advantageous right to always stop first when $ a^*=l^*$, no matter how large $\lambda$ is, leads player $P$ to select $l^*$ strictly larger than $a^*$ (otherwise Player $P$'s reward will always be zero).
Moreover, if player $P$ chooses $l < \overline{x}_c$ 
and has a very high 
{frequency of exercise opportunities}, then player $C$ will try to increase $a$ to $l$ to stop before player $P$ -- in response to this, player $P$ needs to increase $l$. 
In this way, the selection of barriers $l < \overline{x}_c$ 
cannot be part of an equilibrium strategy, justifying the aforementioned asymptotic behaviour. 

\begin{figure}[htbp]
\vspace{-5mm}
\begin{center}
\begin{minipage}{1.0\textwidth}
\centering
\begin{tabular}{cc}
 \includegraphics[scale=0.5]{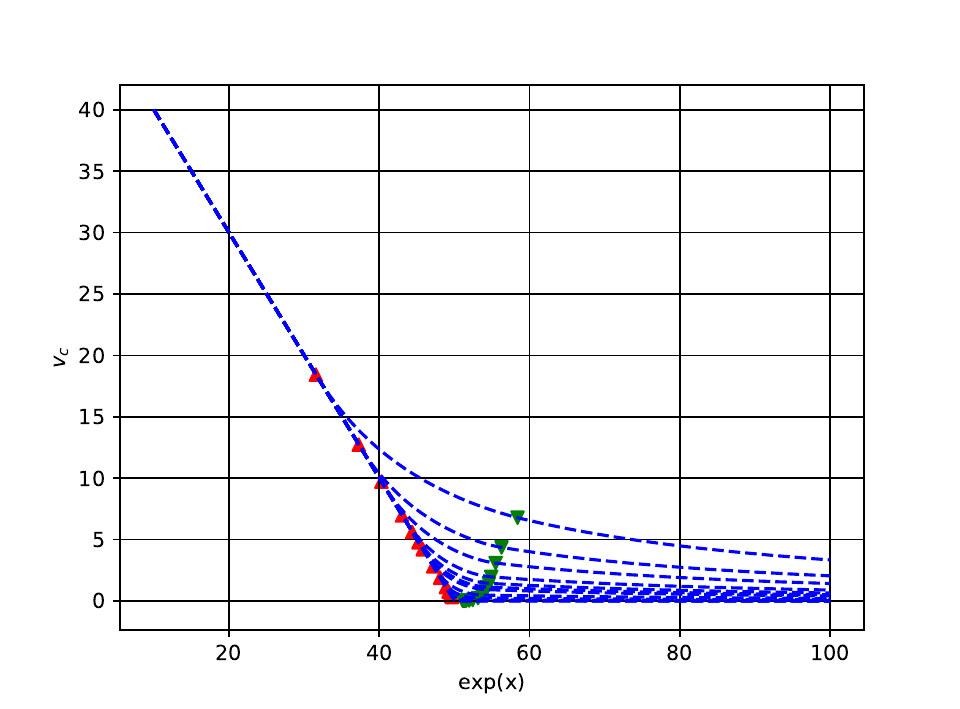} & \includegraphics[scale=0.5]{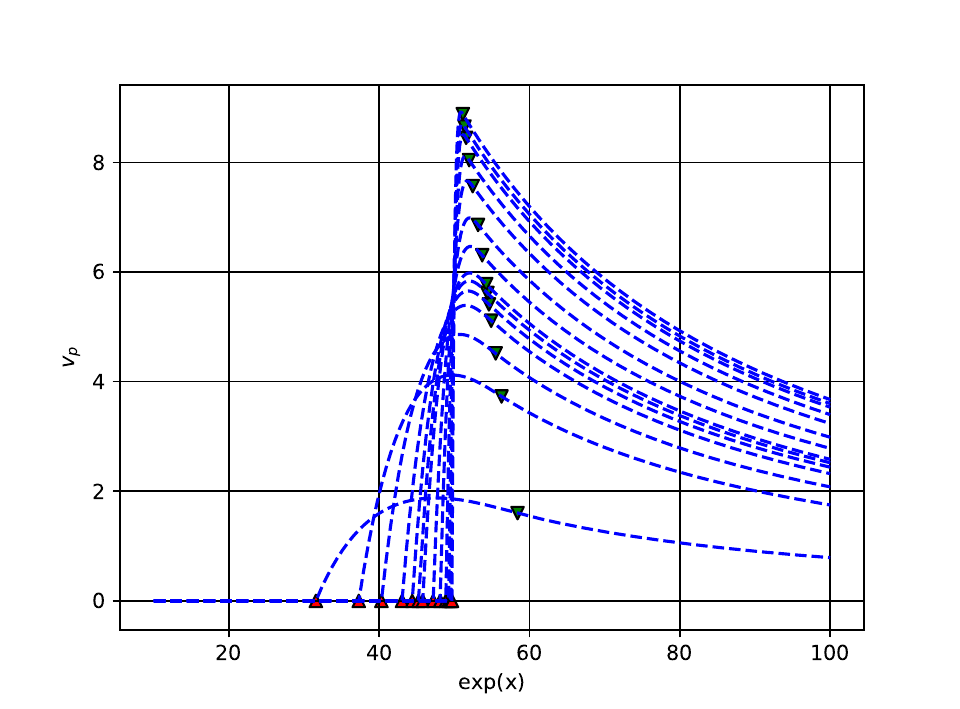}  \\
(i) $e^x \mapsto v_c(x; a^*,l^*)$ & 
(ii)  $e^x \mapsto v_p(x; a^*,l^*)$ 
\end{tabular}
\begin{tabular}{c}
\includegraphics[scale=0.5]{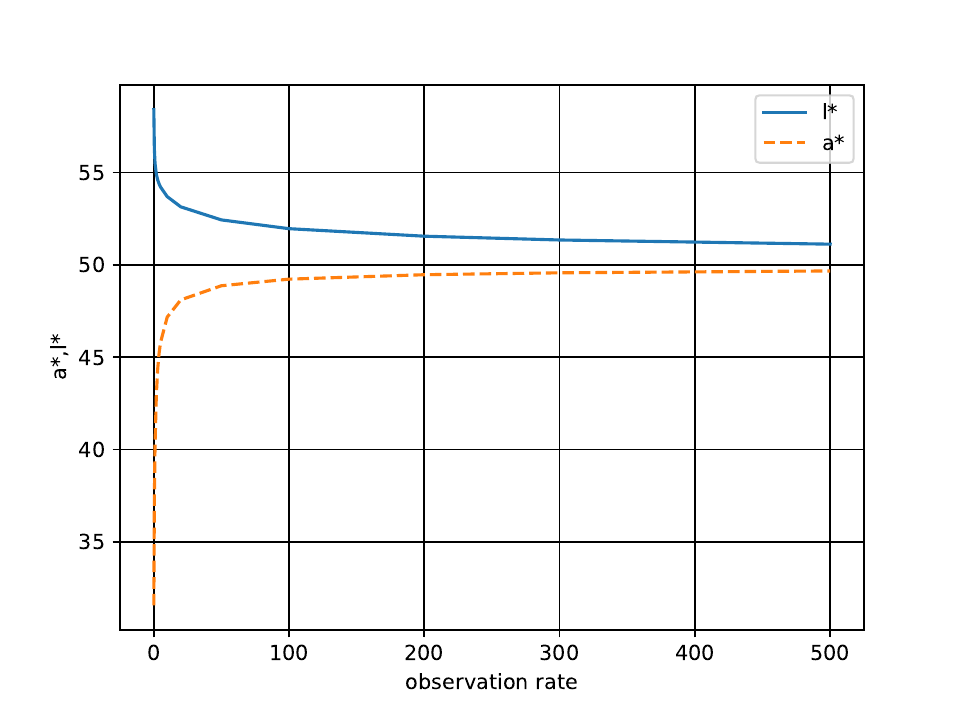} \\
(iii) $\lambda \mapsto a^*, l^*$
\end{tabular}
\end{minipage}
\vspace{-4mm}
\caption{The value functions (i) $e^x \mapsto v_c(x; a^*,l^*)$ and (ii) $e^x \mapsto v_p(x; a^*,l^*)$, for $\lambda = 0.1$, $0.5$, $1$, $2$, $3$, $4$, $5,10,20,50,100,200,300,500$. {Red (resp., Green) triangles represent $a^*$ (resp., $l^*$) for each $\lambda$.}
(iii) The barriers $l^*$ and $a^*$ as functions of $\lambda$.
} \label{plot_lambda}
\end{center}
\end{figure}

\subsection{Value of available exercise opportunities: Asymmetric rewards case (fixed $K_p = 60$)} 
\label{Sec:ValueOpp}

We finally define the \emph{value of {additional exercise opportunities}} to be the amount $\delta := f_p(x) - f_c(x) = K_p-K_c$ 
such that the value functions of both players coincide $v_c(x;a^*,l^*) = v_p(x;a^*,l^*)$.  
In our numerical results, the difference $v_c(x;a^*,l^*) - v_p(x;a^*,l^*)$ is monotone in $K_c$, hence we obtain via the bisection method the unique zero that leads to the desired $\delta$ value. 
In order to analyse how this $\delta$ changes with respect to the starting value $e^x$ and player $P$'s observation rate $\lambda$, we plot $\delta$ in Figure \ref{figure_fair_fee} 
as a function of $e^x$ (when $\lambda = 1$) and $\lambda$ (when $e^x = K_p = 60$). 
It is observed that the value of 
{additional exercise opportunities} decreases both in $e^x$ and $\lambda$, however it does not seem to converge 
to zero as $\lambda \to \infty$, because player $C$ still has the right to stop before player $P$ no matter how large $\lambda$ is.

\begin{figure}[htbp]
\begin{center}
\begin{minipage}{1.0\textwidth}
\centering
\begin{tabular}{cc}
 \includegraphics[scale=0.5]{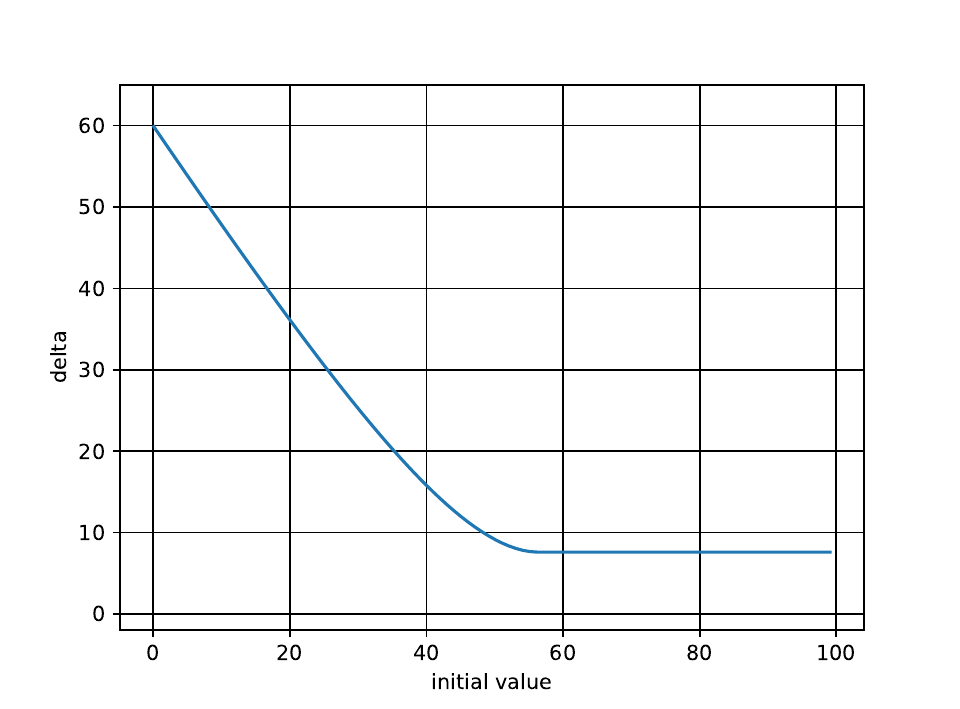} & \includegraphics[scale=0.5]{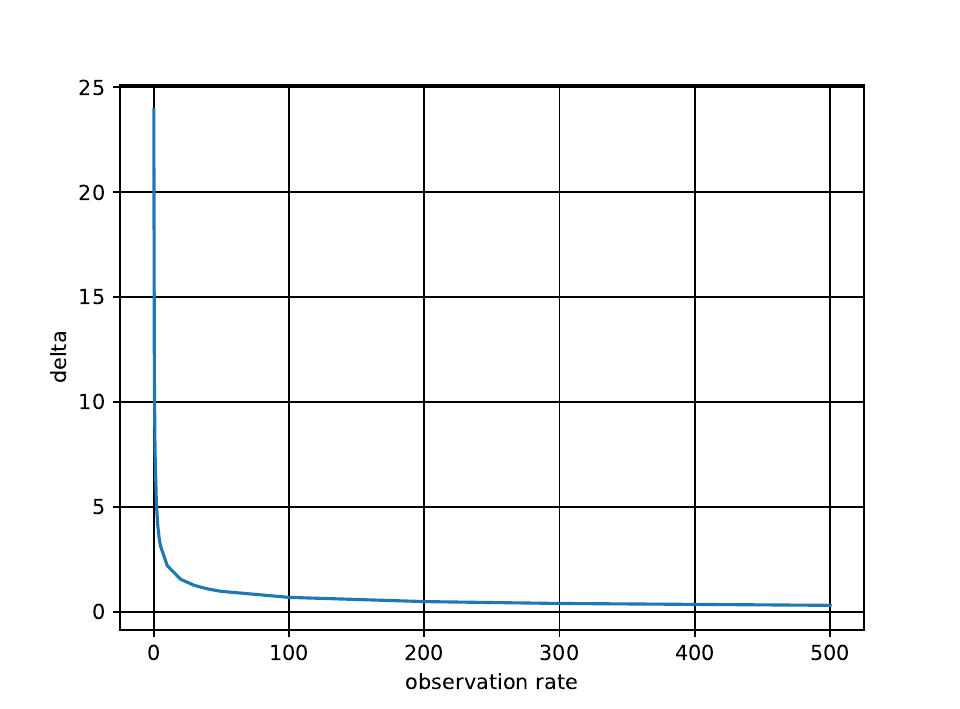}  \\
(i) $e^x \mapsto \delta$ 
& (ii) $\lambda \mapsto \delta$ 
 \end{tabular}
\end{minipage}
\vspace{-4mm}
\caption{
Value of additional 
{exercise opportunities} $\delta := K_p - K_c$, such that $v_c(x; a^*, l^*) = v_p(x; a^*, l^*)$, as a function of (i) the initial asset value $e^x$ and (ii) the observation rate $\lambda$.
} \label{figure_fair_fee}
\end{center}
\end{figure}

\section*{Acknowledgement}

The authors thank the Area Editor, Associate Editor, and anonymous referees for the careful reading of the paper and constructive comments and suggestions. They are also grateful to Haejun Jeon for valuable discussions on the potential application of the considered problem.

\begin{appendix}

\section{Proofs of fluctuation identities}
\label{fluct_proof}


We first obtain the following two results, which will be used in the proofs of Lemmata \ref{Prop_cost_cont_player} and \ref{Prop_cost_per_player}.

The following result provides an important fluctuation identity involving the upward-stopped process, which satisfies $X_{\tau_0^+} \geq 0$, $\px$-a.s. and whose overshoots are expressed in terms of the scale functions. 
\begin{lemma} \label{flW-}
Recall the definition \eqref{olW} of $\mathscr{W}^{(q, \lambda)}_b$. 
For $b>0$, $q\geq0$ and $0 \leq x \leq a$, we have
\begin{align*} 
\E_{-x} \left[e^{-(q+\lambda)\tau_0^+} W^{(q)}(b-X_{\tau_0^+});\tau_0^+<\tau_{-a}^-\right]&=\mathscr{W}^{(q, \lambda)}_b (x)-\frac{W^{(q+\lambda)}(x)}{W^{(q+\lambda)}(a)} \, \mathscr{W}^{(q, \lambda)}_b (a).
\end{align*}
\end{lemma}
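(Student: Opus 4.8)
The plan is to establish this identity by decomposing the first passage above $0$ according to whether the process creeps upward continuously or jumps over $0$, and then to recognize the resulting expression as $\mathscr{W}^{(q,\lambda)}_b$ via its defining convolution \eqref{olW}. First I would observe that, since $X$ is spectrally positive, the process started at $-x$ (with $0 \le x \le a$) can only exit the interval $[-a, 0)$ upward, and at the passage time $\tau_0^+$ it lands at some point $X_{\tau_0^+} \ge 0$ (equal to $0$ when $X$ is of unbounded variation, possibly strictly positive due to an overshooting jump in the bounded variation case). The key tool is the two-sided exit identity: for the process killed at rate $q+\lambda$ and started from $-x$ in the interval $(-a, 0)$, the $(q+\lambda)$-resolvent killed on exiting $[-a,0)$, together with the jump measure $\Pi$, governs the law of $(\tau_0^+, X_{\tau_0^+})$. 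Concretely, I would use the spectrally one-sided exit theory (the analogue of identity \eqref{laplace_upper} adapted to exiting an interval through the upper boundary by a jump), expressing
\[
\E_{-x}\!\left[e^{-(q+\lambda)\tau_0^+} g(X_{\tau_0^+}); \tau_0^+ < \tau_{-a}^-\right]
\]
for a test function $g$ (here $g(\cdot) = W^{(q)}(b - \cdot)$) in terms of $W^{(q+\lambda)}$ and a convolution of $W^{(q+\lambda)}$ against $g$ over $\Pi$, plus the creeping term $g(0)W^{(q+\lambda)}(x)/W^{(q+\lambda)}(a)$.

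The second step is the algebraic identification. Having written the left-hand side as
\[
g(0)\,\frac{W^{(q+\lambda)}(x)}{W^{(q+\lambda)}(a)} + \big(\text{jump contribution}\big),
\]
I would compute the jump contribution and match it against
\[
\mathscr{W}^{(q,\lambda)}_b(x) - \frac{W^{(q+\lambda)}(x)}{W^{(q+\lambda)}(a)}\,\mathscr{W}^{(q,\lambda)}_b(a),
\]
using \eqref{olW} which gives $\mathscr{W}^{(q,\lambda)}_b(x) = W^{(q)}(x+b) + \lambda\int_0^x W^{(q+\lambda)}(x-u)W^{(q)}(u+b)\,\du$. The cleanest route is probably to avoid the jump-measure bookkeeping altogether: instead, note that both sides, as functions of $x$ on $[0,a]$, should satisfy the same characterization. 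Define $F(x)$ to be the left-hand side and $G(x)$ the right-hand side; both vanish at $x = a$ (for $F$ because $\tau_0^+ \ge \tau_{-a}^-$ forces the indicator to $0$ when $x=a$... actually one must be careful here, so I would instead check boundary behaviour at $x=0$ where $F(0) = W^{(q)}(b)$ and verify $G(0) = \mathscr{W}^{(q,\lambda)}_0 \cdots$ — rather, $G(0) = \mathscr{W}^{(q,\lambda)}_b(0) - \mathscr{W}^{(q,\lambda)}_b(a)W^{(q+\lambda)}(0)/W^{(q+\lambda)}(a)$, and $\mathscr{W}^{(q,\lambda)}_b(0) = W^{(q)}(b)$ by \eqref{olW0}) and that both are annihilated by the same $(q+\lambda)$-generator equation on $(0,a)$ with the same regularity, so they coincide by uniqueness. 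I would apply $(\mathcal{L} - (q+\lambda))$ (or rather its spectrally-positive analogue for the dual) to $G$ using the known relations $(\mathcal{L}^{(q+\lambda)})W^{(q+\lambda)} = 0$ off the origin and the fact that $x \mapsto W^{(q)}(x+b)$ solves the corresponding inhomogeneous equation with a $\lambda W^{(q)}(x+b)$ source term, which is exactly cancelled by the convolution term in \eqref{olW} — this is the standard "resolvent-of-killed-process" computation underlying \eqref{olW}.

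The main obstacle I anticipate is handling the overshoot/creeping dichotomy cleanly and getting the boundary conditions exactly right, particularly in the bounded-variation case where $W^{(q+\lambda)}(0) = \mu^{-1} \ne 0$ (Remark \ref{remark_scale_function_properties}), since then the process does not creep and the term $g(0)W^{(q+\lambda)}(x)/W^{(q+\lambda)}(a)$ must be interpreted correctly — it is the probability-weight of exiting "at" level $0$ which is positive only via a jump landing exactly at... no, it lands strictly above. The resolution is that in this identity the decomposition is really: (strong Markov at $\tau_{-a}^- \wedge \tau_0^+$ using the resolvent density of $X$ killed outside $[-a,0)$) composed with the jump kernel $\Pi(\du - X_{s})$ evaluated on the set where the jump clears $0$, i.e.
\[
F(x) = \int_{[-a,0)} r^{(q+\lambda)}_{[-a,0)}(-x, y)\int_{(-y,\infty)} W^{(q)}(b - y - z)\,\Pi(\du z)\,\dd y,
\]
plus the creeping term when unbounded variation. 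I would then substitute the explicit two-sided resolvent density (the $W^{(q+\lambda)}$-based kernel, cf.\ \eqref{resolvent_density_0}) and reduce the double integral to the single convolution in \eqref{olW} via Fubini and the defining Laplace-transform relation for $W^{(q)}$. Verifying that this bookkeeping produces precisely $\mathscr{W}^{(q,\lambda)}_b(x) - \frac{W^{(q+\lambda)}(x)}{W^{(q+\lambda)}(a)}\mathscr{W}^{(q,\lambda)}_b(a)$, with the right-hand side's second term accounting exactly for the killing on $\{\tau_{-a}^- < \tau_0^+\}$, is the crux; the PDE/uniqueness argument above is my preferred fallback to sidestep the Fubini manipulation if it becomes unwieldy.
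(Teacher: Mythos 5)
The paper's own proof is a one-line citation: the identity ``is a direct consequence of Lemma 2.1 in \cite{LRZ} (see also \cite[Lemma 2.2]{LRZ}) and the spatial homogeneity of L\'evy processes.'' Your proposal instead reconstructs the identity from first principles. The two routes are genuinely different, so a comparison is in order.

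Your main approach -- decompose $F(x) := \E_{-x}[e^{-(q+\lambda)\tau_0^+} W^{(q)}(b - X_{\tau_0^+}); \tau_0^+<\tau_{-a}^-]$ into a creeping term plus an integral of the killed resolvent density $r^{(q+\lambda)}_{[-a,0)}$ against the overshoot kernel $\int_{(-y,\infty)} W^{(q)}(b-y-z)\,\Pi(\du z)$, then Fubini into the defining convolution \eqref{olW} -- is exactly the mechanism by which such $\mathscr{W}$-type exit identities are proved in the literature, and is essentially the content of the LRZ lemma the paper cites. So this is a valid reconstruction of what the citation outsources, just more work. What it buys you is self-containedness; what it costs is the Fubini and overshoot bookkeeping which you yourself flag as the crux. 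If you want to keep the paper's short proof but understand it, the cleaner translation is: pass to $Y=-X$ (spectrally negative), so the left-hand side becomes $\widehat{\E}_x[e^{-(q+\lambda)\tau_0^-} W^{(q)}(b + Y_{\tau_0^-}); \tau_0^-<\tau_a^+]$, which after the shift $y \mapsto y+b$ is literally LRZ's two-sided exit formula for a function of the undershoot.

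Your fallback ``PDE/uniqueness'' argument has a concrete gap. You assert $F(0) = W^{(q)}(b)$ as a boundary condition. This holds only when $X$ has unbounded variation. When $X$ has bounded variation, $0$ is irregular for $(0,\infty)$ for a spectrally positive $X$, so under $\p_0$ one has $\tau_0^+ > 0$ a.s. and hence $F(0) < W^{(q)}(b)$ strictly; the function $x \mapsto F(x)$ is right-discontinuous at $x=0$ and jumps down from the extended boundary value $W^{(q)}(b+x)|_{x=0^-}$. This is precisely mirrored on the right-hand side: $G(0) = W^{(q)}(b) - \mathscr{W}^{(q,\lambda)}_b(a)\,W^{(q+\lambda)}(0)/W^{(q+\lambda)}(a) < W^{(q)}(b)$ since $W^{(q+\lambda)}(0) = \mu^{-1} > 0$ there. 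So matching the value at $x=0$ is not the correct way to pin down the solution in the bounded-variation case; the correct Dirichlet data is the nonlocal condition $F(x) = W^{(q)}(b+x)$ for $x < 0$ and $F(x)=0$ for $x > a$ (which the generator reaches through the jump term), together with the harmonicity on $(0,a)$. You do acknowledge the creeping subtlety, but the boundary-matching step as written would not close, whereas the resolvent-plus-overshoot-kernel route, or simply citing LRZ as the paper does, handles both variation regimes uniformly.
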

\begin{proof}
This is a direct consequence of \cite[Lemma 2.1]{LRZ} (see also \cite[Lemma 2.2]{LRZ}) and the spatial homogeneity of L\'evy processes.  
\end{proof}

\begin{lemma}\label{aux_SP}
For $c\geq0$ and $b > 0$, we have
\begin{align*}
\lambda \int_0^b W^{(q)}(u) \, W^{(q+\lambda)}(b+c-u) \diff u 
= W^{(q+\lambda)}(b+c) - \mathscr{W}^{(q,\lambda)}_b (c).
\end{align*}
\end{lemma}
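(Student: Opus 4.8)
\textbf{Proof proposal for Lemma \ref{aux_SP}.}

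The plan is to prove the identity
$\lambda \int_0^b W^{(q)}(u) \, W^{(q+\lambda)}(b+c-u)\,\diff u = W^{(q+\lambda)}(b+c) - \mathscr{W}^{(q,\lambda)}_b(c)$
by recognising both sides as Laplace transforms (in the variable $b$, or in a shifted variable) of the same function, or alternatively by a direct manipulation of the defining convolution identity for scale functions. I would start from the well-known resolvent/convolution relation between $q$-scale functions with different values of $q$: for $q' > q$ (here with $q' = q+\lambda$, so $q' - q = \lambda$),
\[
W^{(q+\lambda)}(z) = W^{(q)}(z) + \lambda \int_0^z W^{(q)}(z-y)\,W^{(q+\lambda)}(y)\,\diff y, \qquad z \geq 0,
\]
which follows from comparing Laplace transforms: $\bigl(\psi(\theta)-q\bigr)^{-1} - \bigl(\psi(\theta)-q-\lambda\bigr)^{-1}$ equals $-\lambda\bigl(\psi(\theta)-q\bigr)^{-1}\bigl(\psi(\theta)-q-\lambda\bigr)^{-1}$, and a convolution in the time domain corresponds to a product of transforms. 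This identity holds for $W^{(q)}$ extended by zero on the negative half-line.

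Next I would substitute $z = b+c$ into the above and split the integral $\int_0^{b+c}$ at the point $y = c$ (assuming WLOG the relevant ordering; the case $c=0$ is immediate and the general case only uses $W^{(q)}(\cdot)=0$ on $(-\infty,0)$). Writing
\[
\lambda \int_0^{b+c} W^{(q)}(b+c-y)\,W^{(q+\lambda)}(y)\,\diff y
= \lambda \int_0^{c} W^{(q)}(b+c-y)\,W^{(q+\lambda)}(y)\,\diff y
+ \lambda \int_{c}^{b+c} W^{(q)}(b+c-y)\,W^{(q+\lambda)}(y)\,\diff y,
\]
the first integral, after the change of variable $y \mapsto c - u$ (so $b+c-y = b+u$), becomes $\lambda \int_0^{c} W^{(q+\lambda)}(c-u)\,W^{(q)}(u+b)\,\diff u$, which is precisely $\mathscr{W}^{(q,\lambda)}_b(c) - W^{(q)}(b+c)$ by the definition \eqref{olW}. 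For the second integral, the change of variable $y \mapsto b+c-u$ (so $y$ ranges over $[c,b+c]$ as $u$ ranges over $[0,b]$, and $W^{(q)}(b+c-y) = W^{(q)}(u)$) turns it into $\lambda \int_0^b W^{(q)}(u)\,W^{(q+\lambda)}(b+c-u)\,\diff u$, which is the left-hand side of the claimed identity. Combining these with the convolution relation $W^{(q+\lambda)}(b+c) = W^{(q)}(b+c) + \bigl[\text{the two split integrals}\bigr]$ and cancelling the $W^{(q)}(b+c)$ terms yields exactly the assertion.

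The only mild obstacle is bookkeeping: keeping track of which scale function sits inside which convolution and handling the extension by zero of $W^{(q)}$ correctly when $c$ and $b$ interact at the splitting point, so that the change-of-variable steps are legitimate over the stated ranges $c \geq 0$, $b > 0$. There is no deep probabilistic input needed here — it is purely the algebra of the scale-function convolution identity — so once the splitting and substitutions are set up, the result falls out directly. An equivalent and perhaps cleaner route would be to take Laplace transforms of both sides in $b$ for fixed $c$ and verify equality of transforms using the identity $\int_0^\infty e^{-\theta b} W^{(q)}(b+c)\,\diff b$ expressions, but the direct convolution argument above avoids introducing the extra transform variable and is self-contained.
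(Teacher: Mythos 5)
Your proposal is correct and follows essentially the same route as the paper: both proofs start from the convolution identity $W^{(q+\lambda)}(z)-W^{(q)}(z)=\lambda\int_0^z W^{(q)}(z-y)W^{(q+\lambda)}(y)\,\diff y$ (identity (6) in \cite{LRZ}) applied at $z=b+c$ and split the resulting integral at the same point (your split at $y=c$ corresponds, under $u=b+c-y$, to the paper's split of $\int_0^b=\int_0^{b+c}-\int_b^{b+c}$ in $u$). The rest is the same bookkeeping with \eqref{olW}, so there is no substantive difference.
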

\begin{proof} 

By identity (6) in \cite {LRZ}, we have for $p,q \geq0$ and $x\in\R$
\begin{align}
\label{LRZ-W}
W^{(q)}(x) - W^{(p)}(x) &= (q-p) \int_0^x W^{(p)}(x-y) W^{(q)}(y) \diff y.
\end{align}
Using the equation \eqref{LRZ-W} and the definition \eqref{olW} of $\mathscr{W}^{(q,\lambda)}_b(\cdot)$,
 we get that
\begin{align*}
&\lambda \int_0^b W^{(q)}(u) \, W^{(q+\lambda)}(b+c-u) \diff u \notag\\
&= \lambda \int_0^{b+c} W^{(q)}(u) \, W^{(q+\lambda)}(b+c-u) \diff u 
- \lambda \int_b^{b+c} W^{(q)}(u) \, W^{(q+\lambda)}(b+c-u) \diff u \notag\\
&= W^{(q+\lambda)}(b+c) - W^{(q)}(b+c) 
- \lambda \int_0^{c} W^{(q)}(u+b) W^{(q+\lambda)}(c-u) \diff u 
= W^{(q+\lambda)}(b+c) - \mathscr{W}^{(q,\lambda)}_b(c) \,,
\end{align*}
which completes the proof. 
\end{proof}

\subsection{Proof of Lemma \ref{W/Z}} \label{proof_W/Z}
By the probabilistic expression \eqref{laplace_upper},  for $0<x<y$,  we have
\begin{equation*}
Z^{(q+\lambda)}(x;\Phi(q))-\frac{Z^{(q+\lambda)}(y;\Phi(q))}{W^{(q+\lambda)}(y)}W^{(q+\lambda)}(x) > 0.
\end{equation*}
Hence,
\begin{equation*}
\frac {W^{(q+\lambda)}(x)}  {Z^{(q+\lambda)}(x;\Phi(q))}< \frac {W^{(q+\lambda)}(y)}  {Z^{(q+\lambda)}(y;\Phi(q))} \,, 
\quad \text{for all } 0<x<y.
\end{equation*}

\subsection{Proof of Lemma \ref{Prop_cost_cont_player}}\label{proof_prop_cost_cont_player}

Throughout this proof, we write $\tilde{g}(x) := \E_x \left[e^{-q \tau_a^-} f_c(X_{\tau_a^-}) 1_{\{\tau_a^-< T_{l}^- \wedge \tau_b^+\}} \right]$ for $x \in \R$. 
 It is straightforward to see by the definitions of $\tilde{g}(x)$ and $\tau_{a}^-$, that $\tilde{g}(x) = f_c(x)$ for all $x < a$.
The remainder of the proof is devoted to the case of $x \geq a$.
We prove this result in the following steps. 

{\it Step 1: Computation of $\tilde{g}(x)$ in terms of $\tilde{g}(l)$.}
On one hand, for $x \geq l$, using the strong Markov property, spatial homogeneity of L\'evy processes and \eqref{upcr}, we obtain 
\begin{align}\label{iden_h_SP_0}
\tilde{g}(x) 
&= \E_x \left[e^{-q\tau_l^-} \tilde{g}(X_{\tau_{l}^-}) ; \tau_l^-<\tau_b^+\right] 
= \tilde{g}(l) \, \frac{W^{(q)}(b-x)}{W^{(q)}(b-l)}.
\end{align}	
On the other hand, for $x \in [a,l)$, 
using again the strong Markov property, we obtain
\begin{align}\label{iden_h_SP_1}
\tilde{g}(x) = 
\E_x \left[e^{-q\tau_{a}^-} f_c(X_{\tau_{a}^-}) ; \tau_{a}^- < T^{(1)}\wedge\tau_{l}^+ \right] 
+ \E_x \left[e^{-q\tau_{l}^+} \tilde{g}(X_{\tau_{l}^+}) ; \tau_{l}^+ < T^{(1)}\wedge \tau_{a}^- \right].
\end{align}
For the first term on the right-hand side of \eqref{iden_h_SP_1}, using the spatial homogeneity of L\'evy processes and \eqref{upcr}, 
\begin{align*}
\E_x \left[e^{-q\tau_{a}^-} f_c(X_{\tau_{a}^-}) ; \tau_{a}^- < T^{(1)} \wedge\tau_{l}^+ \right] 
= \E_x \left[e^{-(q+\lambda) \tau_{a}^-} f_c(X_{\tau_{a}^-}) ; \tau_{a}^- < \tau_{l}^+ \right] 
=f_c(a) \, \frac{W^{(q+\lambda)}(l-x)}{W^{(q+\lambda)}(l-a)}.
\end{align*}
Now, for the second term on the right-hand side of \eqref{iden_h_SP_1}, we firstly see from \eqref{iden_h_SP_0}, given that $X_{\tau_{l}^+} \geq l$, $\p_x$-a.s. (due to the possibility of positive jumps), that
\begin{align*}
\tilde{g}(X_{\tau_{l}^+}) 
= \tilde{g}(l) \, \frac{ W^{(q)}\big(b -X_{\tau_{l}^+} \big)}{W^{(q)}(b-l)}, \quad  \p_x-\text{a.s.},
\end{align*}
which then gives that  
\begin{align*}
\E_x \left[e^{-q\tau_{l}^+} \tilde{g}(X_{\tau_{l}^+}) ; \tau_{l}^+ < T^{(1)}\wedge \tau_{a}^- \right] 
&= \frac{\tilde{g}(l)}{W^{(q)}(b-l)} \, \E_x \left[e^{-q\tau_{l}^+} W^{(q)}(b -X_{\tau_{l}^+}) ; \tau_{l}^+ < T^{(1)}\wedge \tau_{a}^- \right] .
\end{align*}
Combining the above with the spatial homogeneity of L\'evy processes and Lemma \ref{flW-}, we thus obtain
\begin{align*}
\E_x \left[e^{-q\tau_{l}^+} \tilde{g}(X_{\tau_{l}^+}) ; \tau_{l}^+ < T^{(1)}\wedge \tau_{a}^- \right] 
&= \frac{\tilde{g}(l)}{W^{(q)}(b-l)} \, \E_{x-l} \left[e^{-q\tau_{0}^+} W^{(q)}(b-l -X_{\tau_{0}^+}) ; \tau_{0}^+ < T^{(1)}\wedge \tau_{a-l}^- \right] \notag\\ 
&=\frac{\tilde{g}(l)}{W^{(q)}(b-l)}\bigg(\mathscr{W}^{(q, \lambda)}_{b-l}(l-x) - \frac{W^{(q+\lambda)}(l-x)}{W^{(q+\lambda)}(l-a)} \, \mathscr{W}^{(q, \lambda)}_{b-l}(l-a)\bigg).
\end{align*}

Therefore, using \eqref{olW0} and putting all the pieces together, we obtain for $x \geq a$, 
\begin{align}\label{iden_h_SP_2}
\tilde{g}(x) = 
f_c(a) \, \frac{W^{(q+\lambda)}(l-x)}{W^{(q+\lambda)}(l-a)}
+ \frac{\tilde{g}(l)}{W^{(q)}(b-l)} \bigg(\mathscr{W}^{(q, \lambda)}_{b-l}(l-x) - \frac{W^{(q+\lambda)}(l-x)}{W^{(q+\lambda)}(l-a)} \, \mathscr{W}^{(q, \lambda)}_{b-l}(l-a) \bigg).
\end{align}

{\it Step 2: Computation of $\tilde{g}(l)$.} 
We note that we can write
\begin{align*}
\tilde{g}(l) = 
\E_l \left[e^{-q\tau_{a}^-} f_c(X_{\tau_{a}^-}) ; \tau_{a}^- < T^{(1)} \wedge \tau_{b}^+ \right] 
+ \E_l \left[e^{-qT^{(1)}}\tilde{g}(X_{T^{(1)}})1_{\{X_{T^{(1)}}\geq l\}};T^{(1)}< \tau_{a}^-\wedge\tau_{b}^+\right].
\end{align*}
The first term on the right-hand side of the above equation can be expressed as
\begin{align*}
\E_l \left[e^{-q\tau_{a}^-} f_c(X_{\tau_{a}^-}) ; 
\tau_{a}^- < T^{(1)} \wedge \tau_{b}^+ \right] 
= f_c(a) \, \E_l \left[e^{-(q+\lambda)\tau_{a}^-} ; 
\tau_{a}^- < \tau_{b}^+ \right] 
=f_c(a) \, \frac{W^{(q+\lambda)}(b-l)}{W^{(q+\lambda)}(b-a)}.
\end{align*}
Using \eqref{resolvent_density_0}, the second term can be calculated by 
\begin{align*}
&\E_l \bigg[e^{-qT^{(1)}} \tilde{g}(X_{T^{(1)}}) 1_{\{X_{T^{(1)}}\geq l\}} ; T^{(1)}<\tau_{a}^-\wedge\tau^+_b\bigg] 
= \lambda \, \E_l \bigg[\int_0^{\tau_{a}^-\wedge\tau_b^+} 
e^{-(q+\lambda)s}\, \tilde{g}(X_s) \, 1_{\{X_s \geq l \}} \diff s \bigg] \notag\\
&= \lambda \, \int_0^{b-l} \tilde{g}(b-u) \bigg( 
	\frac{W^{(q+\lambda)}(b-l)}{W^{(q+\lambda)}(b-a)} \, W^{(q+\lambda)}(b-a-u) 
	- W^{(q+\lambda)}(b-l-u) \bigg) \diff u \notag\\ 
	&= \lambda \, \frac{\tilde{g}(l)}{W^{(q)}(b-l)} \int_0^{b-l} W^{(q)}(u) \bigg( 
	\frac{W^{(q+\lambda)}(b-l)}{W^{(q+\lambda)}(b-a)}  \, W^{(q+\lambda)}(b-a-u) 
	- W^{(q+\lambda)}(b-l-u) \bigg) \diff u,
	\end{align*}
	where the last equality follows from \eqref{iden_h_SP_0}. 
	Next, using 
Lemma \ref{aux_SP} and \eqref{LRZ-W} with \eqref{olW0} we further get
	\begin{align} \label{iden_tg_SP_5}
\begin{split}
	&\E_l \bigg[e^{-qT^{(1)}} \tilde{g}(X_{T^{(1)}}) \, 1_{\{X_{T^{(1)}}\geq l\}} ; T^{(1)}<\tau_{a}^-\wedge\tau^+_b \bigg] \\
	&= \frac{\tilde{g}(l)}{W^{(q)}(b-l)} \bigg( 
	\frac{W^{(q+\lambda)}(b-l)}{W^{(q+\lambda)}(b-a)} \left(W^{(q+\lambda)}(b-a) -\mathscr{W}^{(q,\lambda)}_{b-l}(l-a)\right) 
	- W^{(q+\lambda)}(b-l) + W^{(q)}(b-l) \bigg) \\
	&= \tilde{g}(l) - \tilde{g}(l) \, \frac{\mathscr{W}^{(q,\lambda)}_{b-l}(l-a) \; W^{(q+\lambda)}(b-l)}{W^{(q)}(b-l) \; W^{(q+\lambda)}(b-a)}. 
\end{split}
	\end{align}
Hence, putting all the pieces together, we get
\begin{align*}
\tilde{g}(l) = 
f_c(a) \, \frac{W^{(q+\lambda)}(b-l)}{W^{(q+\lambda)}(b-a)}
+\tilde{g}(l) - \tilde{g}(l) \, \frac{\mathscr{W}^{(q,\lambda)}_{b-l}(l-a) \; W^{(q+\lambda)}(b-l)}{W^{(q)}(b-l) \; W^{(q+\lambda)}(b-a)}.
\end{align*}
Therefore, by solving for $\tilde{g}(l)$, we finally obtain
\begin{align}\label{iden_h_SP_3}
\tilde{g}(l)=f_c(a) \, \frac{W^{(q)}(b-l)}{\mathscr{W}^{(q,\lambda)}_{b-l}(l-a)}.
\end{align}

{\it Step 3: Computation of $\tilde{g}(x)$.} 
Substituting \eqref{iden_h_SP_3} in \eqref{iden_h_SP_2} we obtain the result. 

\subsection{Proof of Proposition \ref{limit_cont_SP}} \label{proof_limit_cont_SP}
First, we note that by using Exercise 8.5.(i) in \cite{K} and recalling the definition \eqref{olW} of $\mathscr{W}^{(q,\lambda)}_{b-l}(\cdot)$, we have for $x\in\R$, the limit 
\begin{align} \label{limit_Wb}
\lim_{b\to\infty} \frac{\mathscr{W}^{(q,\lambda)}_{b-l}(x)}{W^{(q)}(b-l)} 
&= \lim_{b\to\infty} \bigg(\frac{W^{(q)}(x+b-l)}{W^{(q)}(b-l)} 
+\lambda \int_0^x W^{(q+\lambda)}(x-u) \, \frac{W^{(q)}(u+b-l)}{{W^{(q)}(b-l)}} \du \bigg) \notag\\
&= e^{\Phi(q)x} + \lambda \int_0^x e^{\Phi(q)u} \, W^{(q+\lambda)}(x-u) \du 
= Z^{(q+\lambda)}(x;\Phi(q)) \,,
\end{align}
where the last equality follows from \eqref{ZqlPq}. 	
Then, by taking $b\to\infty$ in \eqref{iden_h_SP} we get the desired result.
	
\subsection{Proof of Lemma \ref{Prop_cost_per_player}}\label{proof_prop_cost_per_player}

Throughout this proof, we write $g(x) := \E_x\left[e^{-q  T_{l}^-} f_p(X_{ T_{l}^-}) 1_{\{ T_{l}^-< \tau_{a}^- \wedge \tau_b^+\}}\right]$ for $x \in \R$. 
It is straightforward to see by the definitions of $g(x)$ and $\tau_{a}^-$, that $g(x) = 0$ for all $x \leq a$.
	Hence, the remainder of the proof is devoted to the case of $x > a$.
	We prove this result in the following steps. 
	
	{\it Step 1: Computation of $g(x)$ in terms of $g(l)$.}
	On one hand, for $x \geq l$, 
	using the strong Markov property, spatial homogeneity of L\'evy processes and \eqref{upcr}, we obtain 
	\begin{align}\label{iden_g_SP_0}
	g(x) 
	&= \E_{x-l}\left[ e^{-q \tau_0^-} g(X_{\tau_0^-}+l) ; \tau_0^-<\tau_{b-l}^+\right] 
	= g(l) \frac{W^{(q)}(b-x)}{W^{(q)}(b-l)}.
	\end{align}
	On the other hand, for $x \in [a,l)$, using again the strong Markov property, we obtain
	\begin{align}\label{iden_g_SP_1}
	g(x)
	= \E_x\left[ e^{-qT^{(1)}} f_p(X_{T^{(1)}}) ; T^{(1)}< \tau_{a}^-\wedge \tau_l^+ \right]
	+ \E_x\left[ e^{-q \tau_l^+} g(X_{\tau_l^+}) ; \tau_l^+<T^{(1)}\wedge \tau_{a}^- \right].
	\end{align}
	For the first term on the right-hand side of \eqref{iden_g_SP_1}, using the spatial homogeneity of L\'evy processes and \eqref{resolvent_density_0}, 
	\begin{align}\label{iden_g_SP_3}
	&\E_x\Big[ e^{-qT^{(1)}} f_p(X_{T^{(1)}}) ; T^{(1)}<\tau_{a}^-\wedge
	\tau_l^+ \Big]= \lambda \, \E_x \bigg[ \int_0^{\tau_{a}^-\wedge \tau_l^+} 
	e^{-(q+\lambda)s} f_p(X_s) \diff s \bigg] \notag\\
	&= \lambda \, \int_0^{l-a} f_p(l-u) \bigg( \frac{W^{(q+\lambda)}(l-x)}{W^{(q+\lambda)}(l-a)} W^{(q+\lambda)} (l-a-u) - W^{(q+\lambda)} (l-x-u) \bigg) \diff u \\
		&= \lambda  \bigg( \frac{W^{(q+\lambda)}(l-x)}{W^{(q+\lambda)}(l-a)} \Gamma(a;l) -  \Gamma(x;l) \bigg), \nonumber
	\end{align}
	where the last equality follows from \eqref{Gamma}.
	
Now, for the second term on the right-hand side of \eqref{iden_g_SP_1}, we firstly see from the spacial homogeneity of L\'evy processes that
\begin{align*}
\E_x\left[ e^{-q\tau_l^+} g(X_{\tau_l^+}) ; \tau_l^+<T^{(1)}\wedge\tau_{a}^- \right] 
&= \E_{x-l}\left[ e^{-q\tau_0^+} g(X_{\tau_0^+}+l) ; \tau_0^+<T^{(1)}\wedge\tau_{a-l}^- \right] .
\end{align*}
It then follows from \eqref{iden_g_SP_0}, given that $X_{\tau_{0}^+} + l \geq l$, $\p_x$-a.s. (due to the possibility of positive jumps), that
\begin{align*}
{g}(X_{\tau_{0}^+} + l) 
= {g}(l) \, \frac{ W^{(q)}\big(b -X_{\tau_{0}^+} - l \big)}{W^{(q)}(b-l)}, \quad  \p_x-\text{a.s.},
\end{align*}
which then gives that  
\begin{align*}
\E_x\left[ e^{-q\tau_l^+} g(X_{\tau_l^+}) ; \tau_l^+<T^{(1)}\wedge\tau_{a}^- \right] 
&= \frac{g(l)}{W^{(q)}(b-l)} \, \E_{x-l}\bigg[ e^{-q\tau_0^+} W^{(q)}(b-X_{\tau_0^+}-l) ; \tau_0^+<T^{(1)}\wedge\tau_{a-l}^- \bigg] . 
\end{align*}
Combining the above with Lemma \ref{flW-}, we thus obtain
\begin{align*}
\E_x\left[ e^{-q\tau_l^+} g(X_{\tau_l^+}) ; \tau_l^+<T^{(1)}\wedge\tau_{a}^- \right] 
&= \frac{g(l)}{W^{(q)}(b-l)} \bigg( 
\mathscr{W}^{(q, \lambda)}_{b-l}(l-x) - \frac{W^{(q+\lambda)}(l-x)}{W^{(q+\lambda)}(l-a)} \, \mathscr{W}^{(q, \lambda)}_{b-l}(l-a) \bigg).
\end{align*}
	
Therefore, by putting all the pieces together, we obtain for $x\in [a,l)$, that
	\begin{align}\label{iden_g_SP_2}
	g(x) 
	=\, 
	&\lambda  \bigg( \frac{W^{(q+\lambda)}(l-x)}{W^{(q+\lambda)}(l-a)} \Gamma(a;l) -  \Gamma(x;l) \bigg) 
+ g(l) \bigg( 
\frac{\mathscr{W}^{(q, \lambda)}_{b-l}(l-x)}{W^{(q)}(b-l)} - \frac{W^{(q+\lambda)}(l-x)}{W^{(q+\lambda)}(l-a)} \, \frac{\mathscr{W}^{(q, \lambda)}_{b-l}(l-a)}{W^{(q)}(b-l)} \bigg).
	\end{align}
	
	{\it Step 2: Computation of $g(l)$.}
	We note that
	\begin{align*}
	g(l) 
	&= \E_l \left[ e^{-qT^{(1)}} f_p(X_{T^{(1)}}) 1_{\{X_{T^{(1)}}<l\}} ; T^{(1)}<\tau_{a}^-\wedge\tau^+_b\right] + \E_l \left[ e^{-qT^{(1)}} g(X_{T^{(1)}}) 1_{\{X_{T^{(1)}}\geq l\}} ; T^{(1)}<\tau_{a}^-\wedge\tau^+_b\right].
	\end{align*}
	Modifying \eqref{iden_g_SP_3} (with $f_p(\cdot) 1_{\{\cdot < l\}}$ instead of $f_p(\cdot)$), 
and using \eqref{Gamma}, the first expectation becomes
\begin{align*} 
	\E_l \bigg[&e^{-qT^{(1)}} f_p(X_{T^{(1)}}) 1_{\{X_{T^{(1)}}<l\}} ; T^{(1)}<\tau_{a}^-\wedge\tau^+_b\bigg] 
	= \lambda \, \E_l \bigg[\int_0^{\tau_{a}^-\wedge\tau_b^+} 
	e^{-(q+\lambda)s} f_p(X_s) 1_{\{X_s<l\}} \diff s \bigg] \nonumber\\
	&=\lambda \, \frac{W^{(q+\lambda)}(b-l)}{W^{(q+\lambda)}(b-a)}\int_{b-l}^{b-a}f_p(b-u)W^{(q+\lambda)}(b-a-u)\diff u 
	= \lambda \, \frac{W^{(q+\lambda)}(b-l)}{W^{(q+\lambda)}(b-a)} \Gamma(a;l) \nonumber
	\end{align*} 
while for the final expectation, 
we simply replace $\tilde{g}$ with $g$ in \eqref{iden_tg_SP_5} to obtain 
	\begin{align*} 
	\E_l \Big[e^{-qT^{(1)}} g(X_{T^{(1)}}) \, 1_{\{X_{T^{(1)}}\geq l\}} ; T^{(1)}<\tau_{a}^-\wedge\tau^+_b \Big]
	= g(l) - g(l) \, \frac{\mathscr{W}^{(q,\lambda)}_{b-l}(l-a) \; W^{(q+\lambda)}(b-l)}{W^{(q)}(b-l) \; W^{(q+\lambda)}(b-a)}. 
	\end{align*}
	Hence, putting all the pieces together 
	in the original equation, we get
	\begin{align*}
	g(l) = \,&\lambda \, \frac{W^{(q+\lambda)}(b-l)}{W^{(q+\lambda)}(b-a)}
	\Gamma(a;l)
	 \, 
	+g(l) - g(l) \, \frac{\mathscr{W}^{(q,\lambda)}_{b-l}(l-a) \; W^{(q+\lambda)}(b-l)}{W^{(q)}(b-l) \; W^{(q+\lambda)}(b-a)}.
	\end{align*}
	Therefore, by solving for $g(l)$, we finally obtain
	\begin{align}\label{iden_g_SP_4}
	g(l) = \lambda \, \frac{W^{(q)}(b-l)}{\mathscr{W}^{(q, \lambda)}_{b-l}(l-a)} \Gamma(a;l).
	\end{align}
	
	{\it Step 3: Computation of $g(x)$.} 
	By substituting \eqref{iden_g_SP_4} in \eqref{iden_g_SP_2} from the previous two steps,  we obtain the result.

\subsection{Proof of Proposition \ref{limit_math_W}}  \label{proof_limit_math_W}
%
Taking $b\to\infty$ in \eqref{iden_g_SP} together with an application of equation \eqref{limit_Wb} leads to the desired result.

\section{Proofs of some technical results in Section \ref{section_threshold_strategies}} 
\label{technical_proof}

\subsection{Proof of Lemma \ref{lemma_x_c_finite}} 
\label{proof_lemma_x_c_finite}
By Remark \ref{remark_h_c_o}, $\underline{a}=\underline{x}_c$ when $X$ is of unbounded variation.
It thus remains 
to prove the claim only for the bounded variation case, i.e. when $W^{(q+\lambda)}(0) > 0$ (see Remark \ref{remark_scale_function_properties}.(ii)).
By Assumption \ref{Ass} and \eqref{a_underbar}, we have $\underline{a} < \overline{x}_c$ and $f_c(\underline{a}) > 0$. 
Hence, by \eqref{def_h_p_c} we have
$h_c(\underline{a}) = \lambda W^{(q+\lambda)}(0) f_c(\underline{a}) > 0$, 
implying 
in view of \eqref{x_p_and_h_p} that $\underline{a} < \underline{x}_c$ .

\subsection{Proof of Lemma \ref{remark_domain_to_focus}} \label{proof_remark_domain_to_focus}
We prove each part separately.  

{\it Proof of }(i). 
We firstly show that $\max_{a \geq x} v_c(x;a,l) \leq v_c(x;x,l)$, which is straightforward to see since $v_c(x;a,l) = v_c(x;x,l) = f_c(x)$ for all $a \geq x$. 
Then, it remains to show that $\max_{a \geq \overline{x}_c} v_c(x;a,l) \leq v_c(x;\overline{x}_c,l)$. For $a > \overline{x}_c$, 
we have $f_c(a)<0$ due to \eqref{about_x_upper}, hence it is clear that $v_c(x;a,l) \leq v_c(x;\overline{x}_c,l)$.
%
%

{\it Proof of }(i)'. 
We have from Remark \ref{remark_when_a_geq_l} that
\[
\max_{a \in \R} v_c(x;a,l) = \max_{a \leq l} v_c(x;a,l)  \vee \max_{a > l} v_c(x;a,l) =  \max_{a \leq l} v_c(x;a,l)  \vee \max_{a > l} v_c^o(x;a).
\]
Due to the assumption $l \geq \underline{a}$ and given that $a \mapsto v_c^o(x;a)$ is decreasing on $[l, \infty) \subseteq [\underline{a}, \infty)$  (see \eqref{v_o_derivative}--\eqref{a_underbar}), we get
\[
\max_{a \in \R} v_c(x;a,l) =  \max_{a \leq l} v_c(x;a,l)  \vee v_c^o(x;l) = \max_{a \leq l} v_c(x;a,l)  \vee v_c(x;l,l)  = \max_{a \leq l} v_c(x;a,l).
\]
Combining this together with (i), we complete the proof. 
%

{\it Proof of }(ii).
It suffices to show that $\max_{l \notin [a, 
\overline{x}_p]} v_p(x;a,l) \leq v_p(x;a,\overline{x}_p)$.
Indeed, on one hand, for $l < a$, 
we have by Remark \ref{remark_when_a_geq_l}, that $v_p(x;a,l) = 0 \leq v_p(x;a,\overline{x}_p)$. On the other hand, for $l > \overline{x}_p$, we have 
\begin{align*}
v_p(x;a,l) -  v_p(x;a,\overline{x}_p) = \E_x\Big[e^{-q T_l^-} f_p(X_{T_l^-}) 1_{\{T_l^-< \tau_{a}^-, T_l^- < T_{\overline{x}_p}^- \}}\Big] - \E_x\Big[e^{-q T_{\overline{x}_p}^-} f_p(X_{T_{\overline{x}_p}^-}) 1_{\{T_{\overline{x}_p}^-< \tau_{a}^-, T_{l}^- < T_{\overline{x}_p}^- \}}\Big] \leq 0
\end{align*}
where the last inequality holds because, on $\{ T_l^- < T_{\overline{x}_p}^- \}$ we have $X_{T_l^-} > \overline{x}_p$, thus $f_p(X_{T_l^-}) < 0$ while $f_p(X_{T_{\overline{x}_p}^-})\geq 0$ due to \eqref{about_x_upper}.
This completes the proof.

\subsection{Proof of Lemma \ref{lemma_derivatives}}
\label{proof_lemma_derivatives}
We prove the two parts separately. 

{\it Proof of part} (i). 
By \eqref{Z_Phi_der}, we have
\begin{align*}
&\frac{\partial}{\partial a} v_c(x;a,l) \\
&= f_c'(a) \, \frac{Z^{(q+\lambda)}(l-x;\Phi(q))}{Z^{(q+\lambda)}(l-a;\Phi(q))} 
+ f_c(a) \, Z^{(q+\lambda)}(l-x;\Phi(q))\frac{\left(\Phi(q) \, Z^{(q+\lambda)}(l-a;\Phi(q)) + \lambda \, W^{(q+\lambda)}(l-a) \right)}{(Z^{(q+\lambda)}(l-a;\Phi(q)))^2} \notag\\
&= \frac{Z^{(q+\lambda)}(l-x;\Phi(q))}{Z^{(q+\lambda)}(l-a;\Phi(q))} \bigg( f_c'(a) + f_c(a) \, \frac{\Phi(q) \, Z^{(q+\lambda)}(l-a;\Phi(q)) + \lambda \, W^{(q+\lambda)}(l-a)}{Z^{(q+\lambda)}(l-a;\Phi(q))} \bigg) \notag\\
&= \frac{Z^{(q+\lambda)}(l-x;\Phi(q))}{Z^{(q+\lambda)}(l-a;\Phi(q))} \Big( f_c'(a) + \Big( \Phi(q) \, Z^{(q+\lambda)}(l-a;\Phi(q)) + \lambda \, W^{(q+\lambda)}(l-a) \Big) v_c(l;a,l) \Big) \,,
\end{align*}
where the last equality holds by \eqref{vCl}.

{\it Proof of part }(ii). Using \eqref{Gamma} we have
$\frac \partial {\partial l} \Gamma(x;l) 
= f_p(l) W^{(q+\lambda)}(l-x)$ and by  \eqref{Z_Phi_der}
    \begin{align*}
\frac \partial {\partial l}\dfrac{Z^{(q+\lambda)}(l-x;\Phi(q))}{Z^{(q+\lambda)}(l-a;\Phi(q))} 
&= \dfrac{\Phi(q)Z^{(q+\lambda)}(l-x;\Phi(q)) + \lambda W^{(q+\lambda)}(l-x)}{Z^{(q+\lambda)}(l-a;\Phi(q))} \\
&\quad- \dfrac{ Z^{(q+\lambda)}(l-x;\Phi(q)) (\Phi(q)Z^{(q+\lambda)}(l-a;\Phi(q)) + \lambda W^{(q+\lambda)}(l-a))}{(Z^{(q+\lambda)}(l-a;\Phi(q)))^2}  \\
&= \dfrac{\lambda }{Z^{(q+\lambda)}(l-a;\Phi(q))} \bigg( W^{(q+\lambda)}(l-x) - \dfrac{ Z^{(q+\lambda)}(l-x;\Phi(q)) W^{(q+\lambda)}(l-a)}{Z^{(q+\lambda)}(l-a;\Phi(q))} \bigg).
  \end{align*}
  Hence, differentiating \eqref{vf_1P}, we get
  \begin{align*}
\lambda^{-1}\frac \partial {\partial l} v_p(x;a,l) 
 &= \dfrac{\lambda }{Z^{(q+\lambda)}(l-a;\Phi(q))} \bigg( W^{(q+\lambda)}(l-x) - \dfrac{ Z^{(q+\lambda)}(l-x;\Phi(q)) W^{(q+\lambda)}(l-a)}{Z^{(q+\lambda)}(l-a;\Phi(q))} \bigg)  \Gamma(a;  l) \\
 &\quad+ \dfrac{Z^{(q+\lambda)}(l-x;\Phi(q))}{Z^{(q+\lambda)}(l-a;\Phi(q))} f_p(l) W^{(q+\lambda)}(l-a) - f_p(l) W^{(q+\lambda)}(l-x) \\
   &= \bigg( \dfrac{Z^{(q+\lambda)}(l-x;\Phi(q))}{Z^{(q+\lambda)}(l-a;\Phi(q))}W^{(q+\lambda)}(l-a) - W^{(q+\lambda)}(l-x) \bigg) \big( f_p(l) - v_p(l;a,l)  \big), 
\end{align*}
where the last equality holds by \eqref{vPl}.

\subsection{Proof of Proposition \ref{smooth}} \label{proof_smooth}


We prove the two sets of properties separately. 

{\it Proof of part} (I). 
The continuity is clear by \eqref{vf_1C}.  Fix $x \in (a^*, \infty)\backslash\{l^*\}$.
By \eqref{vf_1C}, \eqref{Z_Phi_der} and \eqref{vCl},
\begin{align}  
\begin{split}
v_c'(x;a^*,l^*) 
&= - f_c(a^*) \, \dfrac{\Phi(q) Z^{(q+\lambda)}(l^*-x; \Phi(q) ) + \lambda  W^{(q+\lambda)}(l^*-x)}{Z^{(q+\lambda)}(l^*-a^*;\Phi(q))} \\
 &= - \big(\Phi(q)Z^{(q+\lambda)}(l^*-x;\Phi(q))+\lambda W^{(q+\lambda)}(l^*-x) \big)v_c(l^*;a^*,l^*). 
 \end{split} \label{der_v_c_a_l}
\end{align}
Differentiating this further and using \eqref{der_v_c_a_l}, we get  
\begin{align}  \label{2der_v_c_a_l}
v_c''(x+;a^*,l^*) 
 &= \big(\Phi(q) (\Phi(q) Z^{(q+\lambda)}(l^*-x; \Phi(q) ) + \lambda  W^{(q+\lambda)}(l^*-x)) + \lambda W^{(q+\lambda)\prime}((l^*-x)-) \big)v_c(l^*;a^*,l^*) \nonumber\\
 &= - \Phi(q) v_c'(x;a^*,l^*) + \lambda W^{(q+\lambda)\prime}((l^*-x)-) v_c(l^*;a^*,l^*). 
\end{align}
By \eqref{der_v_c_a_l} and \eqref{2der_v_c_a_l} together with Remark  \ref{remark_scale_function_properties}, we prove part  (i).


To prove part (ii), we use the definition \eqref{Jal} and the fact that $I(a^*; l^*)=0$, to conclude from \eqref{der_v_c_a_l} that
\begin{align*}
v_c'(a^*+;a^*,l^*) 
&=- \big(\Phi(q)Z^{(q+\lambda)}(l^*-a^*;\Phi(q))+\lambda W^{(q+\lambda)}(l^*-a^*) \big)v_c(l^*;a^*,l^*) \notag\\ 
&=f_c'(a^*)-I(a^*;l^*)=f_c'(a^*).
\end{align*}
This  coincides with $v_c'(a^*-;a^*,l^*)=f_c'(a^*)
$ (see \eqref{vf_1C}),
hence $v_c(\cdot,a^*,l^*)$ is continuously differentiable at $a^*$.

Part (iii) holds true by combining \eqref{der_v_c_a_l} with Remark \ref{remark_scale_function_properties}.

%

Using once again \eqref{der_v_c_a_l} with the positivity of the scale function, yielding $v_c'(x;a^*,l^*) \leq 0$ for $x \in (a^*, \infty)\backslash\{l^*\}$, together with the continuity of  $v_c(\cdot;a^*,l^*)$ at $l^*$ from part (i), we conclude the monotonicity in part (iv).
Then, \eqref{2der_v_c_a_l}, the monotonicity and positivity of $v_c(x;a^*,l^*)$ imply that $v_c''(x;a^*,l^*) > 0$ for all $x$ except for $l^*$ and the discontinuity points of $W^{(q+\lambda)'}(\cdot)$.
However, in the unbounded variation case, $v_c'(\cdot;a^*,l^*)$ is continuous at $l^*$ from part (iii), 
%
%
while in the bounded variation case, we have from \eqref{der_v_c_a_l} that
\begin{align*}
v_c'(l^*-;a^*,l^*) = v_c'(l^*+;a^*,l^*) - \lambda \, W^{(q+\lambda)}(0) \, v_c(l^*;a^*,l^*) < v_c'(l^*+;a^*,l^*).
\end{align*}
This shows (in view of $v_c$ begin decreasing) the desired convexity on $(a^*, \infty)$ in part (iv).

{\it Proof of part} (II). 
The continuity is clear by \eqref{v_p_opt}. 
Fix $x \in (a^*, \infty)\backslash\{l^*\}$.
By \eqref{v_p_opt}, \eqref{Z_Phi_der} and \eqref{cond_lP}, 
\begin{align}\label{der_v_p_a_l}
v_p'(x;a^*,l^*)
&=- \big(\Phi(q)Z^{(q+\lambda)}(l^*-x;\Phi(q))+\lambda W^{(q+\lambda)}(l^*-x)\big) v_p(l^*;a^*,l^*)+\lambda f_p(l^*)W^{(q+\lambda)}(l^*-x)\notag\\
&\quad -\lambda\int_0^{l^*-x}f_p'(u+x)W^{(q+\lambda)}(u) \du \notag\\
&=- \Phi(q) Z^{(q+\lambda)}(l^*-x;\Phi(q)) v_p(l^*;a^*,l^*) 
- \lambda \int_0^{l^*-x} f_p'(u+x) W^{(q+\lambda)}(u) \du,
\end{align}
while further differentiation yields 
\begin{align}
\label{der_v_p_a_l_second}
v_p''(x;a^*,l^*)&=\Phi(q) \big(\Phi(q)Z^{(q+\lambda)}(l^*-x;\Phi(q)) +\lambda W^{(q+\lambda)}(l^*-x) \big)v_p(l^*;a^*,l^*) 
\notag\\
&\quad +\lambda f_p'(l^*)W^{(q+\lambda)}(l^*-x)
-  \lambda\int_0^{l^*-x}f_p''(u+x)W^{(q+\lambda)}(u) \diff u.
\end{align}
Thus, the $C^2$ property in part (i) follows by using also Assumption \ref{Ass}.(ii), while parts (ii)--(iii) follow from the continuity and smoothness of the scale function in Remark \ref{remark_scale_function_properties} applied to \eqref{der_v_p_a_l}--\eqref{der_v_p_a_l_second}.

\section{Proof of Lemma \ref{verification_split} (Verification for player $P$)} \label{verification_split_proof}

Throughout this proof, we define $w_p(x) := v_p(x; a^*, l^*)$, for all $x \in \R$,  
$e_\lambda$ to be an exponential random variable independent of $X$ and $T^{(0)} := 0$.
This proof extends the results obtained by \cite{DW2002} in an infinite horizon optimal stopping problem for a continuous model, to our case of a random time horizon and spectrally positive L\'evy models.



The proof for $x < a^*$ is straightforward, since 
$V_p(\tau_{a^*}^-,\sigma; x) = 0$ for all $\sigma \in \mathcal{T}_p$ and therefore $\sup_{\sigma \in \mathcal{T}_p}V_p(\tau_{a^*}^-,\sigma; x) = 0 = w_p(x)$ by condition (v). 

In the rest of the proof, we assume $x \geq a^*$ and 
fix $\varepsilon > 0$ and $m > 0$. 
In view of the smoothness of $w_p$ on $[a^*+\varepsilon, \infty)$ from Proposition \ref{smooth}.(II), it follows by It\^o's formula  for all $n \in \mathbb{N}$ and $t \geq 0$, that 
\begin{equation}\label{Ver_lemma_2}
\begin{split}
e^{-(q+\lambda)(t \wedge \tau_{a^* + \varepsilon}^- \wedge \tau^+_m)}w_p\big(X_{t \wedge \tau_{a^* + \varepsilon}^- \wedge \tau^+_m}\big)  -w_p(x)
&=   \int_{0}^{t \wedge \tau_{a^* + \varepsilon}^-  \wedge \tau^+_m} \hspace{-2mm}e^{-(q+\lambda)s}   (\mathcal{L}-(q+\lambda))w_p(X_{s})   \mathrm{d}s
+ M_{t \wedge \tau_{a^* + \varepsilon}^-  \wedge \tau^+_m},
\end{split}
\end{equation}
where $(M_t)_{t \geq 0}$ is a zero-mean local martingale with respect to the filtration $\mathbb{F}$. 

Next, we aim at deriving a probabilistic expression of $w_p$,  which will involve the function $\overline{w}_p$ defined by
\begin{align}
\overline{w}_p(x) := \max\{f_p(x),w_p(x)\} = f_p(x) 1_{\{x \leq l^*\}} + w_p(x) 1_{\{x > l^*\}}, \quad x \in [a^*, \infty), 
\label{barw_P}
\end{align}
where the latter equality holds true due to conditions (iii) and (iv).

\begin{lemma} \label{lem:w_P_form}
For $x \geq a^*$, we have
\begin{align}
w_p(x) = \E_x\bigg[\lambda   \int_{0}^{\tau^-_{a^*}} e^{-(q+\lambda)s} \, \overline{w}_p(X_{s})   \mathrm{d}s\bigg] = \E_x\left[e^{-q \, e_\lambda} \, \overline{w}_p(X_{e_\lambda}) \, 1_{\{ e_\lambda < \tau_{a^*}^-\}}\right]. \label{w_P_form}
\end{align}
\end{lemma}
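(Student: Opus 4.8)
\textbf{Proof plan for Lemma \ref{lem:w_P_form}.}

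The plan is to establish the first equality in \eqref{w_P_form} via a stochastic-calculus/verification argument built on the Itô expansion \eqref{Ver_lemma_2}, and then to derive the second equality by a routine conditioning on the independent exponential clock $e_\lambda$. For the first equality, fix $x \geq a^*$ and work on $[a^*+\varepsilon,\infty)$ where $w_p$ is smooth (Proposition \ref{smooth}.(II)). The key observation is that conditions (i) and (ii) of Lemma \ref{verification_split} together say precisely that $(\mathcal{L}-(q+\lambda))w_p(y) = -\lambda f_p(y)$ on $(a^*,l^*]$ and $(\mathcal{L}-(q+\lambda))w_p(y) = -\lambda w_p(y)$ on $(l^*,\infty)$; in view of the definition \eqref{barw_P} of $\overline{w}_p$, both cases are subsumed into the single identity $(\mathcal{L}-(q+\lambda))w_p(y) = -\lambda\,\overline{w}_p(y)$ for all $y \in (a^*,\infty)\setminus\{l^*\}$. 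Substituting this into \eqref{Ver_lemma_2}, taking expectations (the local martingale $M$ is a true martingale up to the bounded stopping time $t\wedge\tau^-_{a^*+\varepsilon}\wedge\tau^+_m$ since $w_p$ and its derivatives are bounded on the compact region $[a^*+\varepsilon,m]$), and rearranging gives
\begin{align*}
w_p(x) = \E_x\Big[\lambda\int_0^{t\wedge\tau^-_{a^*+\varepsilon}\wedge\tau^+_m} e^{-(q+\lambda)s}\,\overline{w}_p(X_s)\,\mathrm{d}s\Big] + \E_x\big[e^{-(q+\lambda)(t\wedge\tau^-_{a^*+\varepsilon}\wedge\tau^+_m)} w_p(X_{t\wedge\tau^-_{a^*+\varepsilon}\wedge\tau^+_m})\big].
\end{align*}

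The next step is to pass to the limit as $t\to\infty$, $m\to\infty$, and $\varepsilon\downarrow 0$, in that order. For the last (boundary) term: on $\{\tau^-_{a^*+\varepsilon}<\infty\}$ the process $X$ started above $a^*+\varepsilon$ is continuous from the left at its first passage below $a^*+\varepsilon$ (spectral positivity), so $X_{\tau^-_{a^*+\varepsilon}} = a^*+\varepsilon$ and $w_p(a^*+\varepsilon)\to w_p(a^*+)=0$ by continuity of $w_p$ and condition (v); the factor $e^{-(q+\lambda)\tau^-_{a^*+\varepsilon}}$ is bounded by $1$. On $\{\tau^+_m<t\}$ the term is handled by noting $e^{-(q+\lambda)\tau^+_m}\to 0$ as $m\to\infty$ combined with the boundedness of $w_p$ (recall $w_p$ is bounded by $f_p(a^*)<\infty$ as discussed before Lemma \ref{verification_split}). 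On the event where $t$ is the minimum, $e^{-(q+\lambda)t}\to0$. Hence the boundary term vanishes in the limit. For the integral term, monotone/dominated convergence applies: $\overline{w}_p\geq 0$ is bounded on $[a^*,\infty)$ (being the max of the bounded $f_p$ restricted there and the bounded $w_p$), so $\lambda\int_0^{t\wedge\tau^-_{a^*+\varepsilon}\wedge\tau^+_m} e^{-(q+\lambda)s}\overline{w}_p(X_s)\,\mathrm{d}s \uparrow \lambda\int_0^{\tau^-_{a^*}} e^{-(q+\lambda)s}\overline{w}_p(X_s)\,\mathrm{d}s$, using $\tau^-_{a^*+\varepsilon}\uparrow\tau^-_{a^*}$ as $\varepsilon\downarrow0$ (which holds a.s.\ since $X$ creeps downward only through the Gaussian part, but more simply since the left-limit at $\tau^-_{a^*}$ cannot be strictly below $a^*$ — one should cite the relevant standard fluctuation fact, or alternatively first send $\varepsilon\downarrow0$ using that the occupation measure of $\{a^*\}$ is null). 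This yields the first equality in \eqref{w_P_form}.

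Finally, for the second equality, let $e_\lambda\sim\mathrm{Exp}(\lambda)$ be independent of $X$. Conditioning on $e_\lambda=s$ and using Fubini,
\begin{align*}
\E_x\big[e^{-q\,e_\lambda}\,\overline{w}_p(X_{e_\lambda})\,1_{\{e_\lambda<\tau^-_{a^*}\}}\big] = \int_0^\infty \lambda e^{-\lambda s}\,\E_x\big[e^{-qs}\,\overline{w}_p(X_s)\,1_{\{s<\tau^-_{a^*}\}}\big]\,\mathrm{d}s = \E_x\Big[\lambda\int_0^{\tau^-_{a^*}} e^{-(q+\lambda)s}\,\overline{w}_p(X_s)\,\mathrm{d}s\Big],
\end{align*}
which matches the middle expression. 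I expect the main obstacle to be the careful justification of the limiting arguments for the boundary term — in particular controlling $w_p(X_{\tau^-_{a^*+\varepsilon}\wedge\tau^+_m\wedge t})$ uniformly and invoking the correct creeping/left-continuity property of the spectrally positive Lévy process at the downward passage time — rather than any single computation; the algebraic reduction of conditions (i)–(ii) to $(\mathcal{L}-(q+\lambda))w_p=-\lambda\overline{w}_p$ and the exponential-clock identity are both routine.
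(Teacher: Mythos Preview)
Your proposal is correct and follows essentially the same route as the paper: reduce conditions (i)--(iv) to the single identity $(\mathcal{L}-(q+\lambda))w_p = -\lambda\,\overline{w}_p$ on $(a^*,\infty)$, substitute into the It\^o expansion \eqref{Ver_lemma_2}, take expectations, and pass to the limit using boundedness of $w_p$ and $\overline{w}_p$, continuity of $w_p$ at $a^*$ with $w_p(a^*)=0$, and the a.s.\ convergence $\tau^-_{a^*+\varepsilon}\to\tau^-_{a^*}$ (the paper justifies this by citing that $(\tau^-_{-b})_{b\geq 0}$ is a subordinator with potential killing). The only cosmetic difference is the order of the three limits --- the paper sends $m\to\infty$ first, then $t\uparrow\infty$, then $\varepsilon\downarrow 0$ --- which is immaterial here.
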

\begin{proof}
For $x > a^*$ (where $\mathcal{L} w_p(x)$ is well-defined by Proposition \ref{smooth}), we have
\begin{align*} 
(\mathcal{L}-q)w_p(x) + \lambda \, \max\{f_p(x)-w_p(x),0\}
= (\mathcal{L}-(q+\lambda))w_p(x) + \lambda \overline{w}_p(x) 
= 0.
\end{align*}
The last equality holds for $x \geq l^*$ by conditions (i) and (iii), and for $a^* < x < l^*$ by conditions (ii) and (iv). 
Substituting this back in \eqref{Ver_lemma_2}, we obtain
\begin{align*}
e^{-(q+\lambda)(t \wedge \tau_{a^* + \varepsilon}^-  \wedge \tau^+_m)}w_p \big(X_{t \wedge \tau_{a^* + \varepsilon}^-  \wedge \tau^+_m} \big) 
= w_p(x) 
-\lambda   \int_{0}^{t \wedge \tau_{a^* + \varepsilon}^-  \wedge \tau^+_m}e^{-(q+\lambda)s}  \,\overline{w}_p(X_{s})   \mathrm{d}s
+ M_{t \wedge \tau_{a^* + \varepsilon}^-  \wedge \tau^+_m}.
\end{align*}
Therefore, by rearranging the terms and taking expectations, the optional sampling theorem gives
\begin{align*}
w_p(x) 
= \E_x\left[e^{-(q+\lambda)(t \wedge \tau_{a^* + \varepsilon}^-  \wedge \tau^+_m)}w_p \big(X_{t \wedge \tau_{a^* + \varepsilon}^- \wedge \tau^+_m}\big)\right]+\E_x\bigg[\lambda   \int_{0}^{t \wedge \tau_{a^* + \varepsilon}^-  \wedge \tau^+_m}e^{-(q+\lambda)s}  \,\overline{w}_p(X_{s})   \mathrm{d}s \bigg].
\end{align*}
Since $\overline{w}_p$ is bounded, the dominated convergence theorem then gives, upon taking $m \to \infty$, that 
\begin{align}\label{Ver_lemma_4}
w_p(x) 
= \E_x\left[e^{-(q+\lambda)(t \wedge \tau_{a^* + \varepsilon}^-)}w_p \big(X_{t \wedge \tau_{a^* + \varepsilon}^- } \big)\right]+\E_x \bigg[\lambda   \int_{0}^{t \wedge \tau_{a^* + \varepsilon}^-}e^{-(q+\lambda)s}  \,\overline{w}_p(X_{s})   \mathrm{d}s\bigg].
\end{align}
%
Moreover, the boundedness and non-negativity of $w_p$, implies again by the dominated convergence theorem that
\begin{align*}
\lim_{t \uparrow\infty}\E_x\left[e^{-(q+\lambda)(t \wedge \tau_{a^* + \varepsilon}^-)}w_p \big(X_{t \wedge \tau_{a^* + \varepsilon}^- } \big) \right]
&= \E_x\left[e^{-(q+\lambda)\tau_{a^* + \varepsilon}^-} \, w_p \big(X_{\tau_{a^* + \varepsilon}^- } \big) 1_{\{\tau_{a^* + \varepsilon}^- < \infty\}} \right] \\
&\leq \E_x\left[e^{-(q+\lambda)\tau_{a^* + \varepsilon}^-} \, 1_{\{\tau_{a^* + \varepsilon}^- < \infty\}} \right] \max_{a^* \leq y \leq a^* + \varepsilon} w_p(y)
\end{align*}
where the latter inequality holds because $X_{\tau_{a^* + \varepsilon}^- } \leq a^*+\varepsilon$ a.s.  and by the condition (v).
 Since $w_p$ is continuous on $[a^*,\infty)$ by Proposition \ref{smooth}.(II), and $w_p(a^*) = 0$ by condition (v), we have $\max_{a^* \leq y \leq a + \varepsilon} w_p(y) 
\rightarrow 0$ as $\varepsilon \downarrow 0$, hence \begin{align*}
\lim_{\varepsilon \downarrow 0} \lim_{t \uparrow\infty}\E_x\left[e^{-(q+\lambda)(t \wedge \tau_{a + \varepsilon}^- )} \,w_p \big(X_{t \wedge \tau_{a^* + \varepsilon}^- } \big)\right] = 0.
\end{align*}

As $X$ is a spectrally positive \lev process, $(\tau_{-b}^-)_{b \geq 0}$ is a $\p$-subordinator with potential killing (see, e.g., the proof of Lemma VII.23 of  \cite{B}), hence $\tau_{a^*}^-$ at any time $a^*$ is continuous $\p_x$-a.s.; this implies $\tau_{a^*+\varepsilon}^- 
\rightarrow \tau_{a^*}^-$ as $\varepsilon \downarrow 0$ on $\{ \tau_{a^*}^- < \infty\}$. Therefore, by the monotone convergence theorem, we obtain
\[
\E_x\bigg[\int_{0}^{t \wedge \tau_{a^* + \varepsilon}^- } e^{-(q+\lambda)s}\, \overline{w}_p(X_{s})   \mathrm{d}s\bigg] \xrightarrow{t \uparrow \infty, \varepsilon \downarrow 0}  \E_x\bigg[ \int_{0}^{\tau_{a^*}^-}e^{-(q+\lambda)s}\, \overline{w}_p(X_{s})   \mathrm{d}s\bigg]. 
\]
Finally, taking the limit as $t \uparrow\infty$ and $\varepsilon \downarrow 0$ in \eqref{Ver_lemma_4}, we complete the proof of the first equality in \eqref{w_P_form}.
The second equality in \eqref{w_P_form} then holds true due to  the definition of $e_\lambda$.
\end{proof}

%

Then, by Lemma \ref{lem:w_P_form} 
and the definition \eqref{barw_P} of $\overline{w}_p(\cdot)$, we have the inequality 
\begin{align} \label{w_o_bigger}
\overline{w}_p(x) \geq w_p(x) = \E_x\left[e^{-q \, e_\lambda} \, \overline{w}_p(X_{ e_\lambda}) \, 1_{\{ e_\lambda < \tau_{a^*}^-\}}\right].
\end{align}
To proceed further, 
{recall that player $P$'s filtration satisfies $\mathcal{G}_n \subset \tilde{\mathcal{G}}_n$ for all $n \geq 0$ (cf.\ Remark \ref{remark_filtration}).}
Now, fix $n \geq 0$ and observe that the event $\{T^{(n)} < \tau_{a^*}^- \}$ is $\tilde{\mathcal{G}}_n$-measurable, hence by the strong Markov property of $(X,N)$, we get
\begin{align*}
&\E_x\left[e^{-qT^{(n+1)}} \overline{w}_p(X_{T^{(n+1)}}) 1_{\{T^{(n+1)}<\tau_{a^*}^-\}} \,\Big|\, \tilde{\mathcal{G}}_n 
\right] \\
&= e^{-qT^{(n)}} 1_{\{ T^{(n)} < \tau_{a^*}^- \}} \E_x \left[e^{-q(T^{(n+1)}-T^{(n)})} \overline{w}_p(X_{T^{(n+1)}} ) 1_{\{T^{(n+1)} < \tau_{a^*}^- \}} \Big| \tilde{\mathcal{G}}_n  \right] \notag\\
&= e^{-qT^{(n)}} 1_{\{ T^{(n)} < \tau_{a^*}^- \}} \E_x \left[e^{-q(T^{(n+1)}-T^{(n)})} \overline{w}_p(X_{T^{(n+1)}} ) 1_{\{T^{(n+1)} < \tau_{a^*}^- \}} \Big| X_{T^{(n)}}, T^{(n)} \right] \notag\\
&= e^{-qT^{(n)}} 1_{\{ T^{(n)} < \tau_{a^*}^- \}} \E_{X_{T^{(n)}}}\left[e^{-q \, e_\lambda} \, \overline{w}_p(X_{ e_\lambda}) \, 1_{\{ e_\lambda < \tau_{a^*}^-\}}\right] 
  \leq e^{-qT^{(n)}} 1_{\{T^{(n)} < \tau_{a^*}^- \}}\overline{w}_p(X_{T^{(n)}}),
\end{align*}
where the last inequality holds by  \eqref{w_o_bigger}.
This shows that 
$$ \left\{e^{-qT^{(n)}}\overline{w}_p(X_{T^{(n)}})1_{\{ T^{(n)}<\tau_{a^*}^-\}}\right\}_{n\in\N} 
\quad \text{is a $
\tilde{\mathbb{G}}
$-supermartingale}.  
$$ 
{Using the definition \eqref{barw_P} of $\overline{w}_p(\cdot)$, 
we can conclude 
by Fatou's lemma followed by the optional sampling theorem (noting that $\sigma$ is a $\tilde{\mathbb{G}}$-stopping time as $\mathcal{G}_n \subset \tilde{\mathcal{G}}_n$) that}
\begin{multline*}
V_p(\tau_{a^*}^-,\sigma;x) 
\equiv \E_x\left[e^{-q\sigma}f_p(X_{\sigma})1_{\{ \sigma < \tau_{a^*}^-\}}\right] 
\leq \E_x\left[e^{-q\sigma} \overline{w}_p(X_{\sigma})1_{\{ \sigma < \tau_{a^*}^-\}}\right] \\
\leq \liminf_{N\uparrow \infty}\E_x\left[e^{-q(\sigma\wedge T^{(N)})} \overline{w}_p(X_{\sigma\wedge T^{(N)}})1_{\{ \sigma\wedge T^{(N)} < \tau_{a^*}^-\}}\right] 
\leq \E_x\left[e^{-q(\sigma\wedge T^{(1)})} \overline{w}_p(X_{\sigma\wedge T^{(1)}})1_{\{ \sigma\wedge T^{(1)} < \tau_{a^*}^-\}}\right] 
= w_p(x),
\end{multline*}
where the last equality holds because $\sigma \geq T^{(1)}$ a.s. together with \eqref{w_P_form}. The arbitrariness of $\sigma \in \mathcal{T}_p$ then completes the proof.

\end{appendix}

\end{document}